\newtheorem{definition}{Definition}[section]
\newtheorem{theorem}[definition]{Theorem}
\newtheorem{lemma}[definition]{Lemma}
\newtheorem{proposition}[definition]{Proposition}
\newtheorem{corollary}[definition]{Corollary}
\newtheorem{remark}[definition]{Remark}
\newtheorem{example}[definition]{Example}
\numberwithin{equation}{section}
\begin{document}
	\title{A Sufficient and Necessary Condition of PS-ergodicity of Periodic Measures and Generated Ergodic Upper Expectations}
	\author[1,2]{Chunrong Feng}
	\author [3,1] {Baoyou Qu}
	\author[1,2,4]{Huaizhong Zhao}
	\affil[1]{Department of Mathematical Sciences, Loughborough
		University, LE11 3TU, UK}
	\affil[2]{Department of Mathematical Sciences, Durham
		University, DH1 3LE, UK}
	\affil[3] {Zhongtai Securities Institute for Financial Studies, Shandong University, Jinan 250100, China}
	\affil[4]{Research Centre for Mathematics and Interdisciplinary Sciences, Shandong University, Qingdao 266237, China}
	\affil[ ]{C.Feng@lboro.ac.uk, qu@mail.sdu.edu.cn, H.Zhao@lboro.ac.uk}
	\date{}
\maketitle

\begin{abstract}
	This paper contains two parts. In the first part, we study the ergodicity of periodic measures of random dynamical systems on a separable Banach space. We obtain that the periodic measure of the continuous time skew-product dynamical system generated by a random periodic path is ergodic if and only if the underlying noise metric dynamical system at discrete time of integral multiples of the period is ergodic. For the Markov random dynamical system case, we prove that the periodic measure of a Markov semigroup is PS-ergodic if and only if the trace of the random periodic path at integral multiples of period either entirely lies on a Poincar\'{e} section or completely outside a Poincar\'{e} section almost surely. In the second part of this paper, we construct sublinear expectations from periodic measures and obtain the ergodicity of the sublinear expectations from the ergodicity of periodic measures. We give some examples including the ergodicity of the discrete time Wiener shift of Brownian motions. The latter result would have some independent interests.
	
		\noindent
	{\bf Keywords:} ergodicity, periodic measures, random dynamical systems, periodic paths, Markov semigroup, PS-ergodic, sublinear expectation.
\end{abstract}

\section{Introduction}
  Ergodic theory is one of the most important observations in mathematics made in the last century with significance in many areas of physics such as statistical physics (c.f. \cite{Birkhoff},\cite{Neumann1},\cite{Neumann2},\cite{Walters}). Concerning the spreading and irreducibility nature of random systems, ergodicity is a fundamentally natural phenomenon to many stochastic systems (c.f. \cite{Da Prato},\cite{Doob},\cite{Durrett},\cite{Hasminskii}). The theory was substantially developed for (weakly) mixing invariant measures/stationary processes in the stationary regime. Many useful results were obtained especially in the Markovian case.
 
It is noted that the classical ergodic theory excludes random periodic cases. However, random periodicity is ubiquitous. Randomness and periodicity are present simultaneously in many real world problems e.g. maximum daily temperature, sunspot activities, economic cycles, business cycles, El Nino phenomena, ice age and interglacial transitions etc. The concept of random periodic paths and periodic measures were recently introduced to describe random periodicity 
(\cite{ZZ},\cite{FZZ},\cite{F.Z3},\cite{F.Z2}). See also \cite{FLZ},\cite{FWZ}. 

It is worth mentioning that the notion of classical periodic paths cannot be adopted to interpret random periodicity as periodicity breaks under random perturbations. Unless in a very restrictive circumstance, a random process cannot follow a periodic path even if one fixes a realization. The idea of random periodic paths is different. Employing the notion of random dynamical systems and its underlying measure preserving metric dynamical systems $(\Omega, \mathcal{F}, P, (\theta_{t})_{t\in I})$, a random periodic path of period $\tau$ is a random path satisfying for $P-a.e. \omega\in \Omega$, 
\begin{eqnarray*}
Y(s+\tau,\omega)=Y(s,\theta_\tau \omega),\ {\rm for\ all}\ s\in I,
\end{eqnarray*}
or equivalently for $P-a.e. \omega\in \Omega$, $Y(s+\tau,\theta_{-\tau}\omega)=Y(s,\omega),\ {\rm for\ all}\ s\in I$. Here $I$ is a two-sided time set of discrete or continuous type,$I={\mathbb Z}$, or $I={\mathbb R}$.
This is very different from the classical periodic function. 
It is only "periodic" if one prepares to kick the noise backward. However, the pull-back process 
$\phi(s,\omega)=Y(s, \theta_{-s}\omega)$ is a periodic function of $s$. Note $(\phi(s,\omega))_{s\in I}$ is not a path of the random dynamical system but the set 
\begin{equation}
	\label{path of pullback process}
	L^{\omega}=\{\phi(s, \omega): 0\leq s\leq \tau\}
\end{equation}
is an invariant set of the random dynamical system $\Phi$ such that $\Phi(t,\omega)L^{\omega}=L^{\theta_t\omega}$. The set $L^{\omega}$ is a closed curve if $I$ is continuous. This suggests that the random periodic path actually moves from one closed curve to another closed curve. Moreover, the law $\rho_s$ of $Y(s,\cdot)$ defined by 
$$\rho_s(\Gamma)=P(\omega: Y(s,\omega)\in \Gamma) \ \rm {for\ any} \ \Gamma\in {\cal B}(\mathbb X)$$
is a measure-valued function satisfying periodic condition $\rho_{s+\tau}=\rho_s$, for any $s\in \mathbb R$.

The ergodic theory of periodic random dynamical system was observed recently in \cite{F.Z2}. The concept of periodic measure was introduced and its ``equivalence" with random periodic processes was established. Moreover, the average of a periodic measure over one period is an invariant measure from which the ergodicity can be studied. It is defined as the ergodicity of a measure preserving canonical dynamical system lifted from the invariant measure on the phase space. From the ergodic theory of periodic random dynamical systems in \cite{F.Z2}, the distinction between the stationary regime and random periodic regime is characterised by the spectral structure of Markov semigroups or their infinitesimal generators. In the stationary regime, Koopman-von Neumann theorem tells us that 0 is a simple and unique eigenvalue of the generator on the imaginary axis (c.f. \cite{Da Prato}), while in the periodic regime, the infinitesimal generator has infinitely many equally spaced eigenvalues (including 0), which are simple, and no other eigenvalues, on the imaginary axis (\cite{F.Z2}).
  
Random periodic paths have been found in many stochastic systems. We will quote two examples in Section 2 to demonstrate the idea. Needless to say that the existence of periodic measures can be studied without referring to random periodic paths. See \cite{FZZ19} for recent progress.
These concepts 
have been used in the study of bifurcations (\cite{wang}), random attractors ({\cite{blw}), stochastic resonance (\cite{CLRS}), modelling El Nino phenomena 
({\cite{chekroun}) and strange attractors ({\cite{hl}). 

  In this paper, we continue the study on the ergodicity of periodic measures and obtain some new results. First we study random periodic paths on a separable Banach space and associated periodic measures $\mu_s=\delta_{Y(s, \theta_{-s}\omega)}\times dP$ on the product space $(\bar{\Omega}, \bar{\mathcal{F}})=(\Omega\times \mathbb{X}, \mathcal{F}\otimes \mathcal{B}(\mathbb{X}))$. Then $\{\mu_s\}_{s\geq 0}$ is a periodic measure with respect to the skew product $(\bar{\Theta}_t)_{t\geq 0}$. We prove in the first part of the paper that $(\Omega, \mathcal{F}, P, (\theta_{n\tau})_{n\geq 0})$ is ergodic if and only if $(\bar{\Omega}, \bar{\mathcal{F}}, \mu_s, (\bar{\Theta}_{n\tau})_{n\geq 0})$ is ergodic, i.e. the dynamical system $(\bar{\Omega}, \bar{\mathcal{F}}, \{\mu_{s}\}_{s\in \mathbb{R}}, (\bar{\Theta}_{t})_{t\geq 0})$ is PS-ergodic. Note here there is no need of any other conditions on its random periodic paths $Y$ apart from the existence.
  
  The metric dynamical system $(\Omega, \mathcal{F}, P, (\theta_{n\tau})_{n\geq 0})$ being ergodic is stronger than the statement that $(\Omega, \mathcal{F}, P, (\theta_{t})_{t\geq 0})$ is ergodic. They are not normally equivalent. We will give an example that $(\Omega, \mathcal{F}, P, (\theta_{t})_{t\geq 0})$ is ergodic, but the discrete metric dynamical system $(\Omega, \mathcal{F}, P, (\theta_{n\tau})_{n\geq 0})$ is not ergodic. However, we will prove in this paper, for the canonical Wiener process and the Brownian shift, both the discrete dynamical systems and continuous time dynamical systems are ergodic. The result of the discrete dynamical systems of Wiener space is new. This means that our results can apply to 
 stochastic differential equations and stochastic partial differential equations driven by Wiener processes. Suggested by fundamental results of 
 \cite{Arnold},\cite{Elworthy},\cite{Flandoli},\cite{Kunita},\cite{Meyer},\cite{Mohammed}, these equations can generate random dynamical systems, of which the noise metric dynamical system over a group of discrete time is ergodic according to our result here.
  
  For the Markov random dynamical systems, the random periodic paths give rise to periodic measures $(\rho_s)_{s\in \mathbb{R}}$ on the state space $\mathbb{X}$. For each $s$, $\rho_s$ is an invariant measure with respect to discrete semigroup $P(n\tau), n\in \mathbb{N}$ (as a convention, we always assume that $0\in \mathbb{N}$). We will give a necessary and sufficient condition for the periodic measure $(\rho_s)_{s\in \mathbb{R}}$ being PS-ergodic (i.e. for each $s\in \mathbb{R}$, $\rho_s$ is ergodic as an invariant measure with respect to $P(n\tau), n\in \mathbb{N}$), which says for any invariant set $\Gamma$ such that $P_{\tau}\I_{\Gamma}=\I_{\Gamma}$ $\rho_s-a.s.$, the section $L_s^{\omega}:=\{Y(s+k\tau, \omega), k\in \mathbb{Z}\}$ satisfies $L_s^{\omega}\subset \Gamma$ or $L_s^{\omega}\cap \Gamma=\emptyset$, $P-a.s.$. However, it is not known whether or not this result is true in the stationary case. 
  
  Sublinear expectation is used to model uncertainty and ambiguity of probabilities such as subjective probabilities due to heterogeneity of expectation formation process (c.f. \cite{Artzner},\cite{Delbaen},\cite{Peng}). An ergodic theory of sublinear expectation was developed recently by \cite{F.Z1}. In the second part of this paper, we construct an ergodic sublinear expectation from an ergodic periodic measure as an upper expectation for the first time in literature. We prove that if a periodic measure is ergodic, then the generated sublinear expectation, which is invariant with respect to the skew product dynamical system or the Markov semigroup, is ergodic. As for the Birkhoff's type of ergodic theorem, i.e. the law of large number, we obtain the convergence in the quasi-sure sense when we apply the ergodic theory of upper expectations, whilst we can only obtain the convergence in the almost-sure sense by the ergodic theory of periodic measures (\cite{F.Z2}). This provides justifications for the construction of upper expectation and the investigation of its ergodicity, which can provide useful new information. The point of view of upper expectations from periodic measures could also be interesting to the study of finance or coherent risk measure. 
 
  \section{Ergodicity of skew product dynamical systems: necessary and sufficient conditions}
 \subsection{Random periodic paths and periodic measures on product spaces }
  
  Consider a random dynamical system $\Phi: \mathbb{R^+}\times \Omega\times \mathbb{X} \rightarrow \mathbb{X}$ over a metric dynamical system $(\Omega, \mathcal{F}, P, (\theta_t)_{t\in \mathbb{R}})$ on a separable Banach space $\mathbb{X}$. It is a measurable mapping and almost surely $\Phi_0=id$ and $\Phi(t+s, \omega)=\Phi(t, \theta_s\omega)\circ \Phi(s, \omega)$ for any $t,s\geq 0$. The map $\theta: \mathbb{R}\times \Omega \rightarrow \Omega$ is measurable with respect to $(\mathcal{B}(\mathbb{R})\otimes\mathcal{F}, \mathcal{F} )$ such that $\theta_t\circ\theta_s=\theta_{t+s}, t, s\in \mathbb{R}$  and preserves the measure $P$, i.e. $\theta_sP=P$. Random dynamical systems can be generated by stochastic differential equations (\cite{Arnold}, \cite{Elworthy}, \cite{Kunita}, \cite{Meyer}), stochastic partial differential equations (\cite{Flandoli}, \cite{Garrido}, \cite{Mohammed}) and Markov chains (\cite{Kifer}).
  
  Let us recall the definition of random periodic paths (\cite{FZZ}, \cite{F.Z3}, \cite{F.Z2}, \cite{ZZ}).
  
  \begin{definition}
  	\label{Def of RPP}
  	A random periodic path of period $\tau$ of the random dynamical system $\Phi : \mathbb{R}^+ \times \Omega \times \mathbb{X} \rightarrow  \mathbb{X}$ is an $(\mathcal{B}(\mathbb{R})\otimes\mathcal{F}, \mathcal{B}(\mathbb{X}))$-measurable map $Y:\mathbb{R} \times \Omega \rightarrow \mathbb{X}$ such that for almost all $\omega \in \Omega$,
  	\begin{equation}
  	\label{RPP}
  	\Phi(t, \theta_{s}\omega)Y(s, \omega)=Y(t+s, \omega),    Y(s+\tau, \omega)=Y(s, \theta_{\tau}\omega), \text{ for all } t\geq 0, s\in \mathbb{R}.
  	\end{equation}
  	It is called a random periodic path with the minimal period $\tau$ if $\tau >0$ is the smallest number such that (\ref{RPP}) holds.
  	It is a stationary path of $\Phi$ if $Y(s, \theta_{-s}\omega)=Y(0, \omega)=:Y_{0}(\omega)$ for all $s \in \mathbb{R}, \omega \in \Omega$ i.e. $Y_{0}: \Omega \rightarrow \mathbb{X}$ is a stationary path if for almost all $\omega \in \Omega$,
  	\begin{equation}
  	\Phi (t, \omega)Y_{0}(\omega)=Y_0(\theta_{t}\omega), \text{ for all } t \in \mathbb{R}^+.
  	\end{equation}
  \end{definition} 
  
 As we mentioned in the introduction, random periodicity is a common phenomenon. We quote the following two examples for convenience. Detailed proof can be found in  \cite {F.Z2} and is omitted here.
\begin{example}\label{exp2.2}
Consider the following stochastic differential equation on ${\mathbb R}^2$
\begin{eqnarray}\label{zhao300x}
\left\{
\begin{array}{cc}
dx_1=&[-x_2+x_1(1-x_1^2-x_2^2)]dt+x_1dW_1(t),\\
dx_2=&[x_1+x_2(1-x_1^2-x_2^2)]dt+x_2dW_2(t).
\end{array}
\right.
\end{eqnarray}
Here $W_1(t)$ and $W_2(t)$ are two independent one-dimensional two-sided Brownian motions on the probability space
$(\Omega, {\mathcal F}, P)$ with   $(W_1(0),W_2(0))^T=(0,0)^T$. Denote $W(t)=(W_1(t),W_2(t))^T$. 
Set ${\mathcal F}_s^t=\sigma(W(u)-W(v): s\leq v\leq u\leq t)$, ${\mathcal F}_{-\infty}^t=V_{s\leq t}{\mathcal F}_s^t$
and $\theta: {\mathbb R}\times \Omega\to \Omega$ the measure preserving metric dynamical system  
given by 
$$(\theta_s\omega)(t)=W(t+s)-W(s), s,t\in {\mathbb R}.
$$
It is well known that the noiseless system 
\begin{eqnarray*}\label{13Jan7}
\left\{
\begin{array}{cc}
{dx_1\over dt}=&-x_2+x_1(1-x_1^2-x_2^2),\\
{dx_2\over dt}=&x_1+x_2(1-x_1^2-x_2^2),
\end{array}
\right.
\end{eqnarray*}
has a periodic solution $(x_1(t),x_2(t))=(\cos t, \sin t)$. It is proved in \cite{F.Z2} that 
Equation (\ref{zhao300x}) has a unique random periodic solution $x^*(t)=(x^*_1(t),x^*_2(t))\ne (0,0)$
with a positive minimum period satisfying for a.s. $\omega\in \Omega$,
\begin{eqnarray}
x^*(t+\pi,\omega)&=&-x^*(t,\theta _{\pi}\omega),\label{zhao310b},\\
x^*(t+2\pi,\omega)&=&x^*(t,\theta _{2\pi}\omega).\label{zhao310c}
\end{eqnarray}
Numerical simulations are also borrowed from  \cite {F.Z2} to provide numerical evidence of the main result (\ref{zhao310b}) and (\ref{zhao310c}) of 
this example (Figure 1). They describe the random periodicity in the sense of backward kicked noise.

\begin{figure}
\begin{minipage}{\textwidth}
\centering 
\includegraphics[scale=0.28]{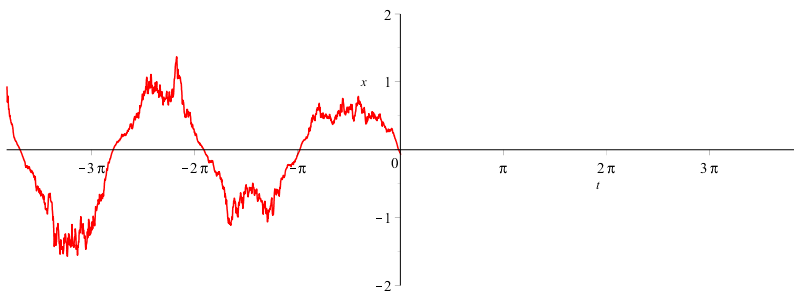}\\
\includegraphics[scale=0.28]{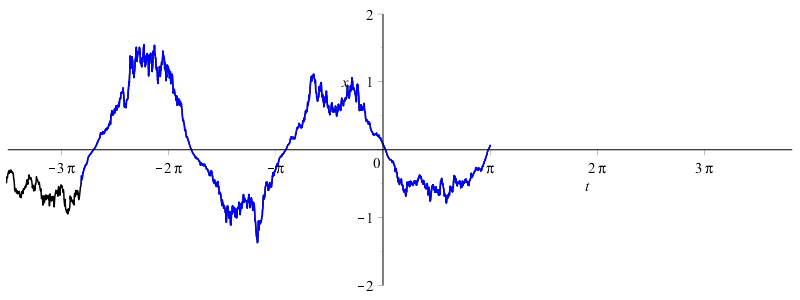}\\
\includegraphics[scale=0.28]{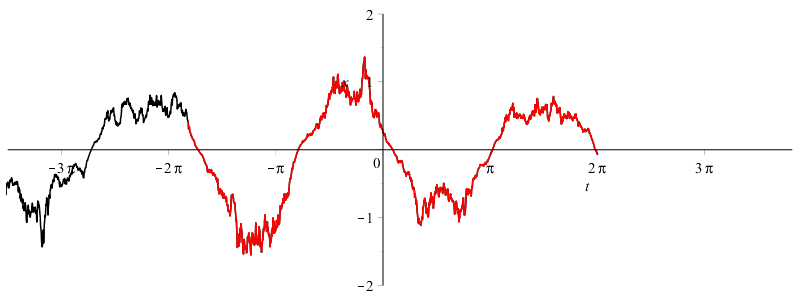}
\end{minipage} 
\caption{ 
From the top to bottom, first coordinate of random periodic paths with one realisation $\omega$, its pullbacks $\theta _{-\pi}\omega$ and $\theta_{-2\pi}\omega$ respectively. Red paths are identical up to a shift and the blue path is the flipped over image of the red paths up to a shift.
}
\label{graph of example 1 samples}
\end{figure}

\end{example}

\begin{example}

Consider the following well known example of a discrete time Markov chain with three states $\{1,2,3\}$ and 
the transition probability matrix
\[ P = \left( \begin{array}{ccc}
0 & {1\over 2} & {1\over 2} \\
1 & 0 & 0 \\
1 & 0 & 0 \end{array} \right).\]
Recall that in the theory of Markov chain the period $d(i)$ of the state $i$ is defined as the greatest common divisor of $\{n: P_{ii}^n>0\}$.
From this definition, it is easy to see that $d(1)=d(2)=d(3)=2$ in this case.
Definition \ref{Def of RPP} looks completely different from the greatest common divisor definition. However, it was shown in \cite{F.Z2} that these two definitions are equivalent.
We can set up a random dynamical system from this Markov chain and construct a random periodic path.
\end{example}

Now we introduce the idea of periodic measures on product space generated by random periodic paths.   Consider a standard product measurable space $(\bar{\Omega}, \bar{\mathcal{F}})=(\Omega \times \mathbb{X}, \mathcal{F}\otimes \mathcal{B}(\mathbb{X}))$ and the skew-product of the metric dynamical system $(\Omega, \mathcal{F}, P, (\theta_{t})_{t\in \mathbb{R}})$ and the cocycle $\Phi(t, \omega)$ on $\mathbb{X}$, 
  $\bar{\Theta}_{t}: \bar{\Omega}\rightarrow \bar{\Omega}$
  \begin{equation}
  \bar{\Theta}_{t}(\bar{\omega})=(\theta_{t}\omega, \Phi(t,\omega)x),  \text{ for all } \bar{\omega}=(\omega, x)\in \bar{\Omega}, t \in \mathbb{R}^+.
  \end{equation}
  Set
  $$\mathcal{P}_{P}(\Omega \times \mathbb{X}):=\{\mu: \text{probability measure on } (\Omega \times \mathbb{X}, \mathcal{F}\otimes \mathcal{B}(\mathbb{X})) \text{ with marginal } P \text{ on } (\Omega, \mathcal{F})\}$$
  and
  $$\mathcal{P}(\mathbb{X})=\{\rho: \text{ probability measure on } (\mathbb{X}, \mathcal{B}(\mathbb{X}))\}.$$
  
  The following definition was given in \cite{F.Z2}.
  
  \begin{definition}
  	A map $\mu: \mathbb{R}\rightarrow \mathcal{P}_{P}(\Omega \times \mathbb{X})$ is called a periodic probability measure of period $\tau$ on $(\Omega \times \mathbb{X}, \mathcal{F}\otimes \mathcal{B}(\mathbb{X}))$ for the random dynamical system $\Phi$ if
  	\begin{equation}
  	\label{PM}
  	\mu_{s+\tau}=\mu_{s} \text{ and } \bar{\Theta}_{t}\mu_{s}=\mu_{t+s}, \text{ for all } t\geq 0, s\in \mathbb{R}.
  	\end{equation}
  	It is called a periodic measure with minimal period $\tau>0$ if $\tau$ is the smallest number such that (\ref{PM}) holds. It is an invariant measure if it also satisfies $\mu_{s}=\mu_{0}$ for any $s \in \mathbb{R}$, i.e. $\mu_{0}$ is an invariant measure of $\Phi$ if $\mu_{0}\in \mathcal{P}_{P}(\Omega \times \mathbb{X})$ and 
  	\begin{equation}
  	\bar{\Theta}_{t}\mu_{0}=\mu_{0}, \text{ for all } t \in \mathbb{R}^+.
  	\end{equation}
  \end{definition}
  
  \begin{theorem}
  	\label{RPPM}
  	(\cite{F.Z2}) If a random dynamical system $\Phi: \mathbb{R}^+\times \Omega \times \mathbb{X}\rightarrow \mathbb{X}$ has a random periodic path $Y: \mathbb{R}\times \Omega\rightarrow \mathbb{X}$, it has a periodic measure on $(\Omega\times \mathbb{X}, \mathcal{F}\otimes\mathcal{B}(\mathbb{X}))$, $\mu: \mathbb{R}\rightarrow \mathcal{P}_{P}(\Omega \times \mathbb{X})$, given by
  	\begin{equation}
  	\label{PMRDS}
  	\mu_{s}(A)=\int_{\Omega}\delta_{Y(s, \omega)}(A_{\theta_{s}\omega})P(d\omega),
  	\end{equation}
  	where $A_{\omega}$ is the $\omega$-section of $A$. Moreover, the time average of the periodic measure defined by
  	\begin{equation}
  	\bar{\mu}=\frac{1}{\tau}\int_{0}^{\tau}\mu_{s}ds
  	\end{equation}
  	is an invariant measure of $\Phi$ whose random factorisation is supported by $L^{\omega}$ defined in \eqref{path of pullback process}.
  	
  \end{theorem}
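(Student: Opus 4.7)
The plan is to verify the two identities in (\ref{PM}) directly from the explicit formula (\ref{PMRDS}), using only the defining relations (\ref{RPP}) and the $P$-invariance of the shifts $\theta_t$, and then to read off both the invariance of $\bar{\mu}$ and its random factorisation as immediate consequences. The starting check, that $\mu_s\in\mathcal{P}_{P}(\Omega\times\mathbb{X})$, is routine: for $B\in\mathcal{F}$, the $\theta_s\omega$-section of $B\times\mathbb{X}$ equals $\mathbb{X}$ or $\emptyset$ according as $\theta_s\omega\in B$ or not, so $\mu_s(B\times\mathbb{X})=P(\theta_s^{-1}B)=P(B)$. Measurability in $s$ and the Fubini manipulations in the remaining steps are justified by the separability of $\mathbb{X}$ together with the joint measurability of $Y$ and $\Phi$.

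For $\tau$-periodicity, I would substitute $Y(s+\tau,\omega)=Y(s,\theta_{\tau}\omega)$ from (\ref{RPP}) into $\mu_{s+\tau}(A)$ and change variable $\omega':=\theta_{\tau}\omega$; the $\theta_{\tau}$-invariance of $P$ then converts the integral back to $\mu_s(A)$. The skew-product identity $\bar{\Theta}_t\mu_s=\mu_{t+s}$ is the main computation. For $A\in\bar{\mathcal{F}}$ the $\omega$-section of the preimage reads
\[(\bar{\Theta}_t^{-1}A)_\omega=\{x:(\theta_t\omega,\Phi(t,\omega)x)\in A\}=\Phi(t,\omega)^{-1}A_{\theta_t\omega}.\]
Substituting this into $\mu_s(\bar{\Theta}_t^{-1}A)=\int_\Omega\delta_{Y(s,\omega)}\bigl(\Phi(t,\theta_s\omega)^{-1}A_{\theta_{t+s}\omega}\bigr)P(d\omega)$, transferring the Dirac mass across $\Phi$ via $\delta_{Y(s,\omega)}\circ\Phi(t,\theta_s\omega)^{-1}=\delta_{\Phi(t,\theta_s\omega)Y(s,\omega)}$, and invoking the cocycle identity $\Phi(t,\theta_s\omega)Y(s,\omega)=Y(t+s,\omega)$ from (\ref{RPP}) yields $\mu_{t+s}(A)$, as required.

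Invariance of $\bar{\mu}$ then follows at once from the skew-product relation and $\tau$-periodicity:
\[\bar{\Theta}_t\bar{\mu}=\frac{1}{\tau}\int_{0}^{\tau}\bar{\Theta}_t\mu_s\,ds=\frac{1}{\tau}\int_{0}^{\tau}\mu_{t+s}\,ds=\frac{1}{\tau}\int_{t}^{t+\tau}\mu_u\,du=\bar{\mu}.\]
For the random factorisation, applying the shift $\omega\mapsto\theta_{-s}\omega$ inside (\ref{PMRDS}) identifies the $\omega$-disintegration of $\mu_s$ with respect to its marginal $P$ as $\mu_s^\omega=\delta_{Y(s,\theta_{-s}\omega)}$, so integrating in $s$ produces $\bar{\mu}^\omega=\frac{1}{\tau}\int_0^{\tau}\delta_{Y(s,\theta_{-s}\omega)}\,ds$, whose support is contained in $L^\omega=\{Y(s,\theta_{-s}\omega):s\in[0,\tau)\}$, which gives the stated support claim.

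The only step that is not pure bookkeeping is the computation of $(\bar{\Theta}_t^{-1}A)_\omega$ together with the transfer of the Dirac mass across $\Phi(t,\theta_s\omega)$; once these are carried out cleanly, (\ref{RPP}) supplies the rest and Fubini handles the exchanges of integration order used in the skew-product step and in defining $\bar{\mu}$. I therefore do not anticipate a genuine analytic obstacle beyond correctly manipulating sections and changes of variables under $\theta_\tau$.
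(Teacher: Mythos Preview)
The paper does not actually prove this theorem: it is quoted verbatim from \cite{F.Z2} and stated without proof, so there is nothing in the present paper to compare your argument against. Your direct verification---checking the marginal, then $\tau$-periodicity via the substitution $\omega\mapsto\theta_\tau\omega$, then $\bar\Theta_t\mu_s=\mu_{t+s}$ via the section identity $(\bar\Theta_t^{-1}A)_\omega=\Phi(t,\omega)^{-1}A_{\theta_t\omega}$ together with (\ref{RPP}), and finally reading off the factorisation $(\mu_s)_\omega=\delta_{Y(s,\theta_{-s}\omega)}$---is correct and is exactly the computation one would expect; indeed Remark~2.4 immediately following the theorem records precisely the disintegration you derived.
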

  
  \begin{remark}
  	For a periodic path Y, it is easy to see that the factorization of $\mu_{s}$ defined in Theorem \ref{RPPM} is
  	\begin{equation}
  	(\mu_{s})_{\omega}=\delta_{Y(s, \theta_{-s}\omega)}
  	\end{equation}
  	and
  	\begin{equation}
  	(\mu_{s+\tau})_{\omega}=(\mu_{s})_{\omega}, \quad \Phi(t, \omega)(\mu_{s})_{\omega}=(\mu_{t+s})_{\omega}.
  	\end{equation}
  \end{remark}

  In this section, we always assume the following condition and use the construction of periodic measure given in (\ref{PMRDS}).\\
  \\
  \textbf{Condition P}. \textit{There exists a random periodic path $Y$ with period $\tau$ for the random dynamical system $\Phi$}.
  
  Throughout the paper, we adopt the standard definition of ergodicity of a measure preserving dynamical system, i.e. any invariant set of the dynamical system has either full measure or zero measure.

  \subsection{The ergodicity of metric dynamical system on the Wiener space}

When we consider stochastic differential equations e.g. in Example \ref{exp2.2}, we need to consider metric dynamical systems on Wiener space, the shift of Brownian motion, of which the ergodicity under discrete time is one of the important conditions in our later set up. We establish the result here first.

It is well known that, for a canonical Wiener space $(\Omega, \mathcal{F}, P)$, the corresponding canonical dynamical system $(\Omega, \mathcal{F}, P, (\theta_t)_{t \geq 0})$ is ergodic, where $\theta_t$ is the Brownian shift. We will prove the discrete dynamical system $(\Omega, \mathcal{F}, P, (\theta_{\tau}^n)_{n \geq 0})$ is also ergodic. Thus our results in this paper can apply to stochastic differential equations and stochastic partial differential equations driven by Brownian motions.

\label{Example3}
A standard Brownian motion or Wiener process $(W_t)_{t \in \mathbb{T}}$ ($\mathbb{T}=\mathbb{R^+}$ (one-sided time) or $\mathbb{T}=\mathbb{R}$ (two-sided time)) in $\mathbb{R}^m$ is a process with $W_0=0$ and stationary independent increments satisfying $W_t-W_s \sim \mathcal{N}(0, |t-s|I)$. The corresponding measure $P$ on $(\Omega, \mathcal{F})$, where $\Omega=\mathcal{C}_0(\mathbb{T}, \mathbb{R}^m)$ and $\mathcal{F}$ is the Borel $\sigma$-algebra on $\Omega$, is called Wiener measure, the probability space $(\Omega, \mathcal{F}, P)$ is called Wiener space. The corresponding canonical metric dynamical system $\Sigma=(\Omega, \mathcal{F}, P, (\theta_t)_{t \in \mathbb{T}})$ describes Brownian motion or (Gaussian) white noise as a metric dynamical system of random dynamical systems generated by stochastic differential equations or stochastic partial differential equations driven by Brownian motions.

Let $\Sigma=(\Omega, \mathcal{F}, P, (\theta_t)_{t \in \mathbb{T}})$ be one of the canonical dynamical system introduced above, with the canonical filtration $\mathcal{F}_s^t:=\sigma(W_u-W_v, s\leq u,v \leq t), s\leq t.$ The following notations are standard (c.f. \cite{Arnold}),
$$\mathcal{T}^{\infty}:=\bigcap_{t \in \mathbb{T}}\mathcal{F}_t^{\infty},$$
and for two-sided time
$$\mathcal{T}_{-\infty}:=\bigcap_{t \in \mathbb{T}}\mathcal{F}_{-\infty}^t,$$
as the tail $\sigma$-algebras ($\mathcal{T}_{-\infty}$: remote past, $\mathcal{T}^{\infty}$: remote future). Set $$\mathcal{I}:=\{A\in \mathcal{F}: \theta_t^{-1}A=A, \text{ for all } t \in \mathbb{T}\},$$ 
and for a given $\tau>0$, define
$$\mathcal{I_{\tau}}:=\{A\in \mathcal{F}: \theta_{\tau}^{-1}A=A\}.$$
We say $\mathcal{A} \subset \mathcal{B}$ mod $P$ if for each $A \in \mathcal{A}$, there is a $B \in \mathcal{B}$ with $P(A\bigtriangleup B)=0$.

\begin{proposition}
	\label{nkey2}
	Assume that $\mathcal{I}, \mathcal{I_{\tau}}, \mathcal{T}^{\infty}$ and $\mathcal{T}_{-\infty}$ are defined as above. Then
	\begin{enumerate}[(i)]
		\item if $\mathbb{T}$ is one-sided ($\mathbb{T}=\mathbb{R^+}$): $\mathcal{I} \subset \mathcal{I_{\tau}} \subset \mathcal{T}^{\infty}$.
		\item if $\mathbb{T}$ is two-sided ($\mathbb{T}=\mathbb{R}$): $\mathcal{I} \subset \mathcal{I_{\tau}} \subset \mathcal{T}^{\infty}$ mod $P$, and $\mathcal{I} \subset \mathcal{T}_{-\infty}$ mod $P$.
	\end{enumerate}
\end{proposition}

\begin{proof}
	Obviously, by the definitions of $\mathcal{I}, \mathcal{I_{\tau}}$, we have $\mathcal{I} \subset \mathcal{I_{\tau}}$.
	
	$(i)$ We just need to prove that $\mathcal{I_{\tau}} \subset \mathcal{T}^{\infty}$. For any $A \in \mathcal{I_{\tau}}$, since 
	$$\mathcal{F}=\mathcal{B}(\Omega)=\sigma(W_t, t \in \mathbb{R^+}) =\sigma(W_u-W_v, 0\leq u, v <\infty)=\mathcal{F}_0^{\infty}$$
	and 
	$$\theta_{\tau}^{-1}\mathcal{F}_t^{\infty}=\mathcal{F}_{t+\tau}^{\infty} \text{ for all } t\in \mathbb{R^+},$$ 
	then 
	$$A=\theta_{\tau}^{-1}A \in \theta_{\tau}^{-1}\mathcal{F}_0^{\infty}=\mathcal{F}_{\tau}^{\infty}.$$ 
	By induction, we have $A\in \mathcal{F}_{n\tau}^{\infty}$ for all $n\in \mathbb{N}$. Then $A\in \bigcap_{n\in \mathbb{N}}\mathcal{F}_{n\tau}^{\infty}$. But 
	$$\mathcal{T}^{\infty}=\bigcap_{t\in \mathbb{R^+}}\mathcal{F}_t^{\infty}=\bigcap_{n\in \mathbb{N}}\mathcal{F}_{n\tau}^{\infty},$$
	thus $A\in \mathcal{T}^{\infty}$. This means $\mathcal{I_{\tau}}\subset \mathcal{T}^{\infty}$.\\
	
	$(ii)$ Without loss of generality, we just need to prove $\mathcal{I_{\tau}}\subset \mathcal{T}^{\infty}$ mod $P$. Similarly we can prove $\mathcal{I_{\tau}}\subset \mathcal{T}_{-\infty}$ mod $P$. To prove the desired result, for any $A\in \mathcal{I_{\tau}}$, we set
	$$A_n:=\{\omega\in \Omega: \text{ there exists } \omega' \in A \text{ such that } \omega(t)=\omega'(t) \text{ for all } t \in [-n\tau, \infty)\}$$
	for all $n \in \mathbb{N}$. We can conclude that $A_n\in \mathcal{F}_{-n\tau}^{\infty}$ and $A_n\supset A_{n+1} \supset A$. Assume $A_n\downarrow \tilde{A}$, then $A\subset \tilde{A}$ and $P(A)\leq P(\tilde{A})$. But since
	\begin{equation*}
		\begin{split}
		\mathcal{F}&=\sigma(W_t, t\in \mathbb{R})\\
		&=\{A|\text{ there exist } \{t_n\}_{n=1}^{\infty}\subset \mathbb{R}, B\in \mathcal{B}((\mathbb{R}^{m})^{\mathbb{N}}) \text{ such that } 
		A=\{\omega: \{\omega(t_n)\}_{n=1}^{\infty}\in B\} \},
		\end{split}
	\end{equation*}	
	so for a given $A\in \mathcal{I}_{\tau}$, there exist a sequence $\{t_n\}_{n=1}^{\infty}$ and a set $B\in \mathcal{B}((\mathbb{R}^{m})^{\mathbb{N}})$ such that $A=\{\omega: \{\omega(t_n)\}_{n=1}^{\infty}\in B\}$. Let 
	$$B^n=\{\Pi_nx=\{x_r\}_{r=1}^{n}: x=\{x_r\}_{r=1}^{\infty}\in B \}$$
	be the projection of $B$ from $(\mathbb{R}^{m})^{\mathbb{N}}$ to $(\mathbb{R}^m)^n$ and $$B_n=\{\omega:(\omega(t_1), \omega(t_2), \cdots, \omega(t_n))\in B^n\}.$$
	Then we know that $P(A)=\lim_{n\rightarrow \infty}P(B_n)$ from construction of finite dimensional distribution of Wiener measure. By the definition of $\tilde{A}$ we have $\tilde{A}\subset B_n$ for all $n$, then $P(\tilde{A})\leq \lim_{n\rightarrow \infty}P(B_n)=P(A)$. Since we know that $P(A)\leq P(\tilde{A})$, so we conclude $P(\tilde{A})=P(A)$ .
	
	Next we prove the following claim.\\
	Claim $(*)$: For all $n\geq 2$, we have $\theta_{\tau}^{-1}A_n \supset A_{n-1} \supset A_n$.
	
	Proof of Claim $(*)$: The claim $A_{n-1}\supset A_n$ is obvious. Now for any $\omega \in A_{n-1}$, there exists $\omega' \in A$ such that $\omega(t)=\omega'(t)$ for all $t \in [-(n-1)\tau, \infty)$, then $\theta_{\tau}\omega(s)=\omega(s+\tau)-\omega(\tau)=\omega'(s+\tau)-\omega'(\tau)=\theta_{\tau}\omega'(s)$ for all $s \in [-n\tau, \infty)$. Since $A$ is $\theta_{\tau}$-invariant set and $\omega'\in A$, so $\theta_{\tau}\omega' \in A$. Thus $\theta_{\tau}\omega \in A_n$ and $\omega \in \theta_{\tau}^{-1}A_n$. This means Claim $(*)$ holds.\\
	
	Now we continue our proof. Since $\theta_{\tau}$ preserves probability $P$, then by Claim $(*)$, we have
	$$P(A_n)=P(\theta_{\tau}^{-1}A_n)\geq P(A_{n-1})\geq P(A_n).$$
	It turns out that
	$$P(A_n)=P(\tilde{A})=P(A) \text{ for all } n\in \mathbb{N}.$$
	Let $B_k=\theta_{k\tau}^{-1}A_2$. By Claim $(*)$, $\theta_{\tau}^{-1}A_2\supset A_2,$ i.e. $\theta_{\tau}A_2 \subset A_2$. So for any $\omega \in B_k=\theta_{k\tau}^{-1}A_2$, then $\theta_{k\tau}\omega \in A_2$, thus $\theta_{\tau}(\theta_{k\tau}\omega)=\theta_{(k+1)\tau}\omega \in A_2$, therefore $\omega \in \theta_{(k+1)\tau}^{-1}A_2=B_{k+1}$. This means $B_k\subset B_{k+1}$. And also we have that $$P(B_k)=P(\theta_{k\tau}^{-1}A_2)=P(A_2)=P(A_n)=P(\tilde{A})=P(A),$$ 
	for all $k, n \in \mathbb{N}$. Since $A_2\in \mathcal{F}_{-2\tau}^{\infty}$, then $B_k=\theta_{k\tau}^{-1}A_2 \in \mathcal{F}_{(-2+k)\tau}^{\infty}$. Let 
	$$\tilde{B}=\lim_{k\rightarrow \infty}B_k=\bigcup_{k\in \mathbb{N}}B_k.$$
	Then
	$$\tilde{B}\in \bigcap_{k\in \mathbb{N}}\mathcal{F}_{(-2+k)\tau}^{\infty}=\mathcal{T}^{\infty}$$
	and
	$$\tilde{B}\supset B_k \supset A_n \supset \tilde{A} \supset A$$
	and
	$$P(\tilde{B})=P(B_k)=P(A_n)=P(\tilde{A})=P(A),$$
	for all $k,n \in \mathbb{N}$.
	
	Then for any $A\in \mathcal{I_{\tau}}$, there exist $\tilde{B}\in \mathcal{T}^{\infty}$ such that $P(A\bigtriangleup \tilde{B})=P(\tilde{B}\setminus A)=P(\tilde{B})-P(A)=0$, i.e. $\mathcal{I_{\tau}} \subset \mathcal{T}^{\infty}$ mod $P$.
\end{proof}

\begin{theorem}
	\label{lastTH}
	The canonical dynamical systems driven by Brownian motion $\Sigma=(\Omega, \mathcal{F}, P,\\
	 (\theta_t)_{t \in \mathbb{T}})$ $(\mathbb{T}=\mathbb{R^+}$ or $\mathbb{R})$ and their discrete dynamical systems $\Sigma^{\tau}=(\Omega, \mathcal{F}, P, (\theta_{\tau}^n)_{n\geq 0})$ are ergodic.
\end{theorem}
\begin{proof}
	By Proposition \ref{nkey2}, if $\mathcal{T}^{\infty}$ is trival mod $P$, $\Sigma$ and $\Sigma^{\tau}$ are ergodic. Since these canonical dynamical systems $\Sigma, \Sigma^{\tau}$ are driven by a standard Brownian motion, then the tail $\sigma$-algebra $\mathcal{T}^{\infty}$ is trival mod $P$ by Kolmogorov's zero-one law.
\end{proof}

\begin{remark}
	(i) It is noted that the ergodicity of $\Sigma$ in Theorem \ref{lastTH} was known in literature (c.f. \cite{Arnold}). The main purpose of the Theorem is to prove $\Sigma^{\tau}$ is ergodic in the discrete case. This result is new. But the continuous case $\Sigma$ being ergodic is proved as a byproduct of the techniques we build here.
	
	(ii) Proposition \ref{nkey2} and Theorem \ref{lastTH} hold also for Wiener process on a separable Hilbert space where the Wiener measure was given in \cite{Da Prato1}. The proof is exactly the same.
	
	(iii) For a given two-sided dynamical system $(\Omega, \mathcal{F}, P, (\theta_t)_{t\in \mathbb{R}})$, we know that the transformation $\theta_t: \Omega\to \Omega$ is invertible and $\theta_t^{-1}=\theta_{-t}$ for each $t\in \mathbb{R}$. Then it is easy to verify that $\theta_t^{-1}A=A$ for all $t\in \mathbb{R}$ if and only if $\theta_t^{-1}A=A$ for all $t\geq 0$, which indicates that ``$(\Omega, \mathcal{F}, P, (\theta_t)_{t\in \mathbb{R}})$ being ergodic" is equivalent to ``$(\Omega, \mathcal{F}, P, (\theta_t)_{t\geq 0})$ being ergodic". In the following, we will always only assume that $(\Omega, \mathcal{F}, P, (\theta_t)_{t\geq 0})$ is ergodic, which actually also indicates the ergodicity of the dynamical system $(\Omega, \mathcal{F}, P, (\theta_t)_{t\in \mathbb{R}})$.
\end{remark}

  \subsection{Ergodicity of skew product dynamical systems in the case of periodic measures}
  \begin{theorem}
  	\label{ET}
  	Assume Condition P. If the metric dynamical system $(\Omega, \mathcal{F}, P, (\theta_{\tau}^{n})_{n\geq 0})$ is ergodic, then the skew product dynamical systems $(\bar{\Omega}, \bar{\mathcal{F}}, \mu_{s}, (\bar{\Theta}_{\tau}^{n})_{n\geq 0})$ for each $s\in \mathbb{R}$ and $(\bar{\Omega}, \bar{\mathcal{F}}, \bar{\mu}, \\
  	(\bar{\Theta}_{t})_{t\geq 0})$ are ergodic.
  \end{theorem}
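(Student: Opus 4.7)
The plan is to exploit the factorisation $(\mu_s)_\omega = \delta_{Y(s,\theta_{-s}\omega)}$ recorded in the remark above, which exhibits $\mu_s$ as the push-forward $(\Psi_s)_*P$ of the graph map
$$\Psi_s:\Omega\to\bar\Omega,\qquad \Psi_s(\omega)=\bigl(\omega,\,Y(s,\theta_{-s}\omega)\bigr).$$
Via $\Psi_s$, the measure-preserving system $(\bar\Omega,\bar{\mathcal F},\mu_s,\bar\Theta_\tau)$ becomes a factor of $(\Omega,\mathcal F,P,\theta_\tau)$, provided one can verify the intertwining relation $\bar\Theta_\tau\circ\Psi_s=\Psi_s\circ\theta_\tau$ almost surely. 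Once that is in place, the ergodicity of the top system is transferred from the base by a routine pull-back argument.

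To verify the intertwining, I would just compute both sides:
$$\bar\Theta_\tau\Psi_s(\omega)=\bigl(\theta_\tau\omega,\,\Phi(\tau,\omega)Y(s,\theta_{-s}\omega)\bigr),\qquad \Psi_s(\theta_\tau\omega)=\bigl(\theta_\tau\omega,\,Y(s,\theta_{-s}\theta_\tau\omega)\bigr).$$
Their second components agree after applying, in turn, the cocycle identity $\Phi(t,\theta_u\omega)Y(u,\omega)=Y(t+u,\omega)$ at $u=s$ with $\omega$ replaced by $\theta_{-s}\omega$, and then the periodicity $Y(s+\tau,\cdot)=Y(s,\theta_\tau\cdot)$:
$$\Phi(\tau,\omega)Y(s,\theta_{-s}\omega)=Y(s+\tau,\theta_{-s}\omega)=Y(s,\theta_\tau\theta_{-s}\omega)=Y(s,\theta_{-s}\theta_\tau\omega).$$

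With the intertwining established, any $\bar\Theta_\tau$-invariant $B\in\bar{\mathcal F}$ satisfies $\theta_\tau^{-1}\Psi_s^{-1}(B)=\Psi_s^{-1}(B)$ modulo $P$, so the hypothesis on $(\Omega,\mathcal F,P,(\theta_\tau^n))$ forces $P(\Psi_s^{-1}(B))\in\{0,1\}$; since $\mu_s(B)=P(\Psi_s^{-1}(B))$, the ergodicity of each $\mu_s$ under $(\bar\Theta_\tau^n)$ follows. For the statement about $\bar\mu$ and the continuous flow $(\bar\Theta_t)_{t\ge 0}$, I would observe that the defining property $\bar\Theta_t\mu_s=\mu_{t+s}$ gives, for any $B$ invariant under every $\bar\Theta_t$, the identity $\mu_{s+t}(B)=\mu_s(\bar\Theta_t^{-1}B)=\mu_s(B)$, so $s\mapsto \mu_s(B)$ is constant on $\mathbb R$; specialising to $t=\tau$ and invoking the first half shows this constant lies in $\{0,1\}$, hence $\bar\mu(B)=\frac{1}{\tau}\int_0^\tau \mu_s(B)\,ds\in\{0,1\}$.

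The only substantive step is the intertwining identity; everything else is a standard transfer of ergodicity along a measurable factor map, and no assumption on $Y$ beyond Condition P is required, matching the claim that ergodicity of the discrete noise shift alone already suffices.
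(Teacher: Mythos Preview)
Your proposal is correct and follows essentially the same approach as the paper's proof. Your graph map $\Psi_s$ satisfies $\Psi_s^{-1}(B)=A_s$ in the paper's notation, and your intertwining identity $\bar\Theta_\tau\circ\Psi_s=\Psi_s\circ\theta_\tau$ is precisely the content of the paper's verification that $\theta_\tau^{-1}A_s=A_s$; for the continuous-time part, the paper reaches $\mu_s(A)=\mu_0(A)$ via the set identity $\theta_t^{-1}A_s=A_{s-t}$, whereas you invoke $\bar\Theta_t\mu_s=\mu_{s+t}$ directly, which is the same fact one level up.
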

  \begin{proof}
  	First, note by Theorem \ref{RPPM},  that $\mu_{s}$ defined in (\ref{PMRDS})  is a periodic measure on $(\bar{\Omega}, \bar{\mathcal{F}})$, so $\bar{\Theta}_{\tau}$ preserves measures $\mu_{s}$ for each $s\in \mathbb{R}$ and $(\bar{\Theta}_{t})_{t\geq 0}$ preserves the measure $\bar{\mu}$. 
  	
  	Next, we will show that the dynamical system $(\bar{\Omega}, \bar{\mathcal{F}}, \mu_{s}, (\bar{\Theta}_{\tau}^n)_{n\geq 0})$ is ergodic for any fixed $s\in \mathbb{R}$. By definition of ergodicity, we need to show that for any $A\in \bar{\mathcal{F}}$ with $\bar{\Theta}_{\tau}^{-1}A=A$, either $\mu_{s}(A)=0$ or 1. Define $A_{s}:=\{\omega: (\omega, Y(s,\theta_{-s}\omega))\in A\}$, then
  	\begin{equation}
  	\label{key1}
  	\begin{split}
  	\mu_{s}(A)
  	&=\int_{\Omega}\I_{A_{\omega}}(Y(s, \theta_{-s}\omega))P(d\omega)\\
  	&=P(\{\omega: (\omega, Y(s, \theta_{-s}\omega))\in A\})\\
  	&=P(A_{s}).\\
  	\end{split}
  	\end{equation}
  	Moreover
  	\begin{equation*}
  	\begin{split}
  	\theta_{\tau}^{-1}A_{s}&=\{\omega: \theta_{\tau}\omega\in A_{s}\}\\
  	&=\{\omega: (\theta_{\tau}\omega, Y(s, \theta_{-s}\theta_{\tau}\omega))\in A\}\\
  	&=\{\omega: \bar{\Theta}_{\tau}(\omega, Y(s-\tau, \theta_{-(s-\tau)}\omega))\in A\}\\
  	&=\{\omega: (\omega, Y(s-\tau, \theta_{-(s-\tau)}\omega))\in \bar{\Theta}_{\tau}^{-1}A\}\\
  	&=\{\omega: (\omega, Y(s-\tau, \theta_{-(s-\tau)}\omega))\in A\}\\
  	&=\{\omega: (\omega, Y(s, \theta_{-s}\omega))\in A\}\\
  	&=A_{s},\\
  	\end{split}
  	\end{equation*}
  	for all $s\in \mathbb{R}$. Thus $A_s$ is an invariant set with respect to $\theta_{\tau}$.
  	
  	Since $(\Omega, \mathcal{F}, P, (\theta_{\tau}^{n})_{n\geq 0})$ is ergodic, then we have $P(A_{s})=0$ or $P(A_{s})=1$. Thus $\mu_{s}(A)=P(A_s)=0$ or 1 from (\ref{key1}).
  	
  	Let us show that the dynamical system $(\bar{\Omega}, \bar{\mathcal{F}}, \bar{\mu}, (\bar{\Theta}_t)_{t \geq 0})$ is ergodic, i.e. for any $A\in \bar{\mathcal{F}}$ with $\bar{\Theta}_{t}^{-1}A=A$ for all $t \in \mathbb{R}^+$, we need to prove that $\bar{\mu}(A)=0$ or 1. For such $A$, by what we just proved, we know that $\mu_{s}(A)=0$ or 1 for all $s\in \mathbb{R}$. On the other hand, since
  	\begin{equation}
  	\begin{split}
  	\theta_{t}^{-1}A_{s}&=\{\omega: \theta_{t}\omega\in A_{s}\}\\
  	&=\{\omega: (\theta_{t}\omega, Y(s, \theta_{-s}\theta_{t}\omega))\in A\}\\
  	&=\{\omega: \bar{\Theta}_{t}(\omega, Y(s-t, \theta_{-(s-t)}\omega))\in A\}\\
  	&=\{\omega: (\omega, Y(s-t, \theta_{-(s-t)}\omega))\in \bar{\Theta}_{t}^{-1}A\}\\
  	&=\{\omega: (\omega, Y(s-t, \theta_{-(s-t)}\omega))\in A\}\\
  	&=A_{s-t},\\
  	\end{split}
  	\end{equation}
  	 we have $\mu_{s}(A)=P(A_{s})=P(\theta_{t}^{-1}A_{s})=P(A_{s-t})=\mu_{s-t}(A)$. This means $\mu_{s}(A)=\mu_{0}(A)$. Hence $\bar{\mu}(A)=\frac{1}{\tau}\int_{0}^{\tau}\mu_{s}(A)ds=\mu_{0}(A)=0$ or 1.
  \end{proof}
The following theorem shows that the converse of Theorem \ref{ET} also holds.
\begin{theorem}
	\label{Converse ET}
	Assume Condition P. If $(\bar{\Omega}, \bar{\mathcal{F}}, \mu_{s}, (\bar{\Theta}_{\tau}^n)_{n\geq 0})$ is ergodic for some $s\in \mathbb{R}$, then $(\Omega, \mathcal{F}, P, (\theta_{\tau}^n)_{n\geq 0})$ is ergodic.
\end{theorem}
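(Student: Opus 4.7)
The plan is to show contrapositively that any $\theta_\tau$-invariant set $B \in \mathcal{F}$ with non-trivial measure lifts to a $\bar{\Theta}_\tau$-invariant subset of $\bar{\Omega}$ with non-trivial $\mu_s$-measure, contradicting ergodicity upstairs. So I would fix the value $s \in \mathbb{R}$ for which $(\bar{\Omega}, \bar{\mathcal{F}}, \mu_s, (\bar{\Theta}_\tau^n)_{n\geq 0})$ is ergodic, take an arbitrary $B \in \mathcal{F}$ with $\theta_\tau^{-1} B = B$, and form the cylinder $\bar{B} := B \times \mathbb{X} \in \bar{\mathcal{F}}$.

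The first step is to check that $\bar{B}$ is $\bar{\Theta}_\tau$-invariant. Since $\bar{\Theta}_\tau(\omega, x) = (\theta_\tau \omega, \Phi(\tau, \omega)x)$, the second component is irrelevant for membership in a cylinder of the form $B \times \mathbb{X}$, so
\begin{equation*}
\bar{\Theta}_\tau^{-1}\bar{B} = \{(\omega, x) : \theta_\tau \omega \in B\} = (\theta_\tau^{-1} B) \times \mathbb{X} = B \times \mathbb{X} = \bar{B}.
\end{equation*}

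Next I would compute $\mu_s(\bar{B})$ directly from the representation (\ref{PMRDS}) (or equivalently from the factorisation $(\mu_s)_\omega = \delta_{Y(s,\theta_{-s}\omega)}$ in the remark). The $\omega$-section of $\bar{B}$ is $\mathbb{X}$ when $\omega \in B$ and $\emptyset$ otherwise, hence $\delta_{Y(s,\omega)}(\bar{B}_{\theta_s \omega}) = \mathbf{1}_B(\theta_s \omega)$, which yields
\begin{equation*}
\mu_s(\bar{B}) = \int_\Omega \mathbf{1}_B(\theta_s \omega)\, P(d\omega) = P(\theta_s^{-1} B) = P(B),
\end{equation*}
using that $\theta_s$ preserves $P$. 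By the ergodicity of $(\bar{\Omega}, \bar{\mathcal{F}}, \mu_s, (\bar{\Theta}_\tau^n)_{n \geq 0})$, the invariant set $\bar{B}$ must satisfy $\mu_s(\bar{B}) \in \{0,1\}$, whence $P(B) \in \{0,1\}$, proving that $(\Omega, \mathcal{F}, P, (\theta_\tau^n)_{n \geq 0})$ is ergodic.

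There is no genuine obstacle here: the essential point is simply that invariance of $B$ in $\Omega$ survives lifting because $\bar{\Theta}_\tau$ acts on the first coordinate via $\theta_\tau$, and the projection identity $\mu_s(B \times \mathbb{X}) = P(B)$ reduces the ergodicity transfer to a one-line computation. The only thing to be careful about is making sure the lift chosen does not depend on $Y$ (so that it works uniformly in $s$), and that is why taking the full cylinder $B \times \mathbb{X}$ rather than something like $\{(\omega, Y(s,\theta_{-s}\omega)) : \omega \in B\}$ is the right move.
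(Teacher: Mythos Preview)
Your proposal is correct and follows essentially the same approach as the paper: lift the $\theta_\tau$-invariant set $B$ to the cylinder $B\times\mathbb{X}$, verify it is $\bar{\Theta}_\tau$-invariant, and compute $\mu_s(B\times\mathbb{X})=P(B)$ before invoking ergodicity upstairs. The only cosmetic difference is that the paper obtains $\mu_s(\bar{F})=P(F)$ by identifying the set $\bar{F}_s=\{\omega:(\omega,Y(s,\theta_{-s}\omega))\in\bar{F}\}$ with $F$ via (\ref{key1}), whereas you get there by integrating $\mathbf{1}_B(\theta_s\omega)$ and using that $\theta_s$ preserves $P$; both computations are equivalent.
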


\begin{proof}
	Fix an $F\in \mathcal{F}$ with $\theta_{\tau}^{-1}F=F$. Let $\bar{F}=F\times \mathbb{X} \in \bar{\mathcal{F}}$, then we have
	\begin{equation}
	 \label{key4}
		\begin{split}
		\bar{\Theta}_{\tau}^{-1}\bar{F}&=\{(\omega, x): \bar{\Theta}_{\tau}(\omega, x)\in \bar{F}\}\\
		     &=\{(\omega, x): (\theta_{\tau}\omega, \Phi(\tau, \omega)x)\in F\times \mathbb{X}\}\\
		     &=\{(\omega, x): \theta_{\tau}\omega \in F\}\\
		     &=(\theta_{\tau}^{-1}F)\times \mathbb{X} = F\times \mathbb{X}=\bar{F}.\\
		\end{split}
	\end{equation}
	And by (\ref{key1}), we also have
	$$\mu_{s}(\bar{F})=P(\bar{F}_s) \text{ for all } s\in \mathbb{R},$$
	where $\bar{F}_s=\{\omega: (\omega, Y(s, \theta_{-s}\omega))\in \bar{F}\}=\{\omega: (\omega, Y(s, \theta_{-s}\omega))\in F\times \mathbb{X}\}=F.$
	Thus
	\begin{equation}
	\label{key2}
	\mu_{s}(\bar{F})=P(F) \text{ for all } s\in \mathbb{R}.
	\end{equation}
    Since $(\bar{\Omega}, \bar{\mathcal{F}}, \mu_{s}, (\bar{\Theta}_{\tau}^n)_{n\geq 1})$ for some $s\in \mathbb{R}$ is ergodic, by (\ref{key4}) and (\ref{key2}) we have 
    $$P(F)=\mu_{s}(\bar{F})=0 \text{ or } 1.$$
\end{proof}

\begin{remark}
	\label{re1}
	(i). From Theorem \ref{ET} and Theorem \ref{Converse ET}, we can conclude that $(\bar{\Omega}, \bar{\mathcal{F}}, \mu_{s}, (\bar{\Theta}_{\tau}^n)_{n\geq 0})$ is ergodic for some $s\in \mathbb{R}$ implies that $(\bar{\Omega}, \bar{\mathcal{F}}, \mu_{s}, (\bar{\Theta}_{\tau}^n)_{n\geq 0})$ is ergodic for all $s\in \mathbb{R}$. This conclusion does not seem to be true in the phase space case, which we will consider in the next section.

	(ii). It is easy to check that if the dynamical system $(\Omega, \mathcal{F}, P, (\theta_{\tau}^n)_{n \geq 0})$ is ergodic, then $(\Omega, \mathcal{F}, P, (\theta_t)_{t \geq 0})$ will be ergodic. The converse is not true in general. A counterexample is given below.
	
\end{remark}

\begin{example} (A metric dynamical system on torus)
\label{Example2}
Now we consider $\tilde{\Omega}=[0, 1)\times [0, 1)$ and $\tilde{\mathcal{F}}:=\mathcal{B}(\tilde{\Omega})$. Define the map $\tilde{\theta}^{\alpha}: \mathbb{R}\times \tilde{\Omega} \rightarrow \tilde{\Omega}$ by
$$\tilde{\theta}^{\alpha}_t(r, x):=((r+t) \ \text{mod } 1, (x+t\alpha)\  \text{mod } 1) \text{ for all } t\in \mathbb{R}, (r, x)\in \tilde{\Omega},$$
where $\alpha$ is a fixed positive irrational number. Then it is easy to check that $(\tilde{\Omega}, \tilde{\mathcal{F}}, (\tilde{\theta}^{\alpha}_t)_{t\in \mathbb{R}})$ is a dynamical system. Let $L$ be the Lebesgue measure on $[0, 1)$ and $\mathcal{P}(\tilde{\Omega})$ be the set of probability measures on $(\tilde{\Omega}, \tilde{\mathcal{F}})$. Define $\mu: \mathbb{R} \rightarrow \mathcal{P}(\tilde{\Omega})$ by 
$$\mu_{s}:= \delta_{\{s\ \text{mod }1\}}\times L,$$
then $\mu$ is a periodic probability measure with period 1 on $(\tilde{\Omega}, \tilde{\mathcal{F}})$. Let $\bar{\mu}:= \int_{0}^{1}\mu_{s}ds$.
\end{example}
\begin{proposition}
	\label{prop1}
	The dynamical systems $(\tilde{\Omega}, \tilde{\mathcal{F}}, \mu_{s}, ((\tilde{\theta}^{\alpha}_1)^n)_{n\geq 0}) \text{ for all } s\in \mathbb{R}$ and $(\tilde{\Omega}, \tilde{\mathcal{F}}, \bar{\mu}, \\ 
	(\tilde{\theta}^{\alpha}_t)_{t\geq 0})$ are ergodic.
\end{proposition}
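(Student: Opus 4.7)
The plan is to split the proposition into its two claims and reduce each to the classical ergodicity of irrational rotations (resp.\ linear flows) on a torus.

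\textbf{Part 1: the discrete system for each $s$.} The measure $\mu_s=\delta_{s\bmod 1}\times L$ is concentrated on the fibre $\{s\bmod 1\}\times[0,1)$. Since $\tilde\theta^{\alpha}_1(r,x)=((r+1)\bmod 1,(x+\alpha)\bmod 1)$ equals $(r,(x+\alpha)\bmod 1)$ whenever $r=s\bmod 1$, the map $\tilde\theta^\alpha_1$ preserves this fibre and, restricted to it, acts exactly as the irrational rotation $R_\alpha:x\mapsto(x+\alpha)\bmod 1$ on $([0,1),L)$. So I would take an arbitrary invariant $A\in\tilde{\mathcal F}$, form the vertical section $A_s:=\{x\in[0,1):(s\bmod 1,x)\in A\}$, and verify from the definitions that $R_\alpha^{-1}A_s=A_s$. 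Then Fubini against $\delta_{s\bmod 1}\times L$ gives $\mu_s(A)=L(A_s)$, and the classical ergodicity of $R_\alpha$ with respect to Lebesgue measure (via, e.g., Fourier coefficients) forces $\mu_s(A)\in\{0,1\}$.

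\textbf{Part 2: the continuous system with $\bar\mu$.} First I would check that $\bar\mu$ coincides with the two-dimensional Lebesgue measure $L\times L$ on $\tilde\Omega$. For Borel $B,C\subset[0,1)$,
$$\bar\mu(B\times C)=\int_0^1\delta_{s\bmod 1}(B)\,L(C)\,ds=L(B)L(C),$$
and extension to $\tilde{\mathcal F}$ is standard. Under this identification, $(\tilde\theta^\alpha_t)_{t\in\mathbb R}$ is the classical linear flow on the $2$-torus with irrational slope $\alpha$. To prove ergodicity I would use Fourier analysis: for $f\in L^2(\bar\mu)$ with $f\circ\tilde\theta^\alpha_t=f$ for all $t$, expand $f(r,x)=\sum_{m,n\in\mathbb Z}c_{m,n}e^{2\pi i(mr+nx)}$ and compare Fourier coefficients to obtain $c_{m,n}\bigl(e^{2\pi it(m+n\alpha)}-1\bigr)=0$ for every $t\in\mathbb R$. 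Irrationality of $\alpha$ then forces $c_{m,n}=0$ whenever $(m,n)\neq(0,0)$, so $f$ is $\bar\mu$-a.e.\ constant, which is equivalent to ergodicity of the flow.

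The main technical point, rather than a genuine obstacle, is the bookkeeping identity $\mu_s(A)=L(A_s)$ and the care needed to verify that the section $A_s$ is $R_\alpha$-invariant directly from $\tilde\theta^\alpha_1$-invariance of $A$; once these are in hand, both assertions are essentially appeals to the classical Weyl-type ergodicity statements for irrational translations.
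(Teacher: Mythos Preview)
Your argument is correct. Part~1 is essentially identical to the paper's proof: both form the vertical section $A_{s\bmod 1}$, verify it is invariant under the irrational rotation $R_\alpha$, compute $\mu_s(A)=L(A_{s\bmod 1})$, and invoke the classical ergodicity of $R_\alpha$.

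For Part~2 you take a genuinely different route. You identify $\bar\mu$ with two-dimensional Lebesgue measure and then run the standard Fourier argument for the linear flow of irrational slope on $\mathbb{T}^2$. The paper instead stays within the sectioning framework: it observes that a $(\tilde\theta^\alpha_t)_{t\in\mathbb{R}}$-invariant set is in particular $\tilde\theta^\alpha_1$-invariant, applies Part~1 to get $\mu_0(\tilde A)\in\{0,1\}$, and then shows via a section computation that $\theta_{t\alpha}^{-1}\tilde A_{(s+t)\bmod 1}=\tilde A_s$, whence $\mu_s(\tilde A)=L(\tilde A_s)=L(\tilde A_0)=\mu_0(\tilde A)$ for every $s$, so $\bar\mu(\tilde A)=\mu_0(\tilde A)$. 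Your approach is cleaner and more self-contained as a proof of ergodicity; the paper's approach has the advantage of reusing Part~1 directly and, more importantly, of producing the intermediate identity $\mu_s(\tilde A)=\mu_0(\tilde A)$ for invariant sets, which the paper records and later invokes when proving ergodicity of the associated sublinear dynamical system on the torus.
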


\begin{proof}
	Let $\Omega:=[0,1), \mathcal{F}:=\mathcal{B}(\Omega)$ and $\theta_{\alpha}: \Omega \rightarrow \Omega, \theta_{\alpha}(x)=(x+\alpha)\ \text{mod } 1$. It is well known that the dynamical system $(\Omega, \mathcal{F}, L, (\theta_{\alpha}^n)_{n \in \mathbb{N}})$ with an irrational number $\alpha$ is ergodic.
	
	Fix an $\tilde{A}\in \tilde{\mathcal{F}}$ with $(\tilde{\theta}^{\alpha}_1)^{-1}\tilde{A}=\tilde{A}$. Define $\tilde{A}_{r}:=\{x: (r, x)\in \tilde{A}\}$ for all $r\in [0, 1)$, then for any $s\in \mathbb{R}$
	\begin{equation}
	\label{key5}
	\mu_{s}(\tilde{A})=\int_{\tilde{\Omega}}I_{\tilde{A}}(r,x)\delta_{\{s\ \text{mod }1\}}(dr)L(dx)
	=\int_{0}^{1}I_{\tilde{A}}(s\ \text{mod }1, x)L(dx)
	=L(\tilde{A}_{(s\ \text{mod }1)}),
	\end{equation}
	and
	\begin{equation}
	\label{key6}
	\begin{split}
	\theta_{\alpha}^{-1}\tilde{A}_{(s\ \text{mod }1)}&=\{x: \theta_{\alpha}x\in \tilde{A}_{(s\ \text{mod }1)}\}\\
	&=\{x: (s\ \text{mod 1}, (x+\alpha)\ \text{mod }1)\in \tilde{A}\}\\
	&=\{x: \tilde{\theta}^{\alpha}_1(s\ \text{mod }1, x)\in \tilde{A}\}\\
	&=\{x: (s\ \text{mod }1, x)\in (\tilde{\theta}^{\alpha}_1)^{-1}\tilde{A}=\tilde{A}\}\\
	&=\tilde{A}_{(s\ \text{mod }1)}.
	\end{split}
	\end{equation}
	Applying the ergodicity of the dynamical system $(\Omega, \mathcal{F}, L, (\theta_{\alpha}^n)_{n\geq 0})$ and (\ref{key5}), (\ref{key6}), we have
	$$\mu_{s}(\tilde{A})=L(\tilde{A}_{(s\ \text{mod }1)})=0 \text{ or } 1.$$
	This means the dynamical systems $(\tilde{\Omega}, \tilde{\mathcal{F}}, \mu_{s}, ((\tilde{\theta}^{\alpha}_1)^n)_{n\geq 0})$ for all $s\in \mathbb{R}$ are ergodic.
	
	Again for any $\tilde{A} \in \tilde{\mathcal{F}}$ with $(\tilde{\theta}^{\alpha}_t)^{-1}\tilde{A}=\tilde{A}$ for all $t\in \mathbb{R}$, then $(\tilde{\theta}^{\alpha}_1)^{-1}\tilde{A}=\tilde{A}$ and $\mu_0(\tilde{A})=0$ or 1. Since
	$$\tilde{A}=\bigcup_{s\in [0,1)}(\{s\}\times \tilde{A}_s)$$
	and
	\begin{equation*}
	\begin{split}
	(\tilde{\theta}^{\alpha}_t)^{-1}\tilde{A}
	&=\bigcup_{r\in [0,1)}(\tilde{\theta}^{\alpha}_t)^{-1}(\{r\}\times \tilde{A}_r)\\
	&=\bigcup_{r\in [0,1)}\{(s, x): s\in [0,1), \tilde{\theta}^{\alpha}_t(s,x)\in \{r\}\times \tilde{A}_r\}\\
	&=\bigcup_{r\in [0,1)}\{(s, x): s\in [0,1), ((s+t)\ \text{mod }1, (x+t\alpha)\ \text{mod }1)\in \{r\}\times \tilde{A}_r\}\\
	&=\bigcup_{r\in [0,1)}\{(s, x): s\in [0,1), (s+t)\ \text{mod }1=r, \theta_{t\alpha}x\in \tilde{A}_r\}\\
	&=\bigcup_{r\in [0,1)}\{(s, x): s\in [0,1), (s+t)\ \text{mod }1=r, \theta_{t\alpha}x\in \tilde{A}_{((s+t)\ \text{mod }1)}\}\\
	&=\bigcup_{s\in [0,1)}\{(s, x): \theta_{t\alpha}x\in \tilde{A}_{((s+t)\ \text{mod }1)}\}\\
	&=\bigcup_{s\in [0,1)}(\{s\}\times \theta_{t\alpha}^{-1}\tilde{A}_{((s+t)\ \text{mod }1)}),\\
	\end{split}
	\end{equation*}
	so $\bigcup_{s\in [0,1)}(\{s\}\times \theta_{t\alpha}^{-1}\tilde{A}_{((s+t) \ \text{mod }1)})=\bigcup_{s\in [0,1)}(\{s\}\times \tilde{A}_s)$ and hence $\theta_{t\alpha}^{-1}\tilde{A}_{((s+t) \ \text{mod }1)}=\tilde{A}_s$ for all $t\in \mathbb{R}, s\in [0, 1)$. In particular, $\theta_{t\alpha}^{-1}\tilde{A}_t=\tilde{A}_0$ for all $t\in [0, 1)$. Therefore
	\begin{equation}
	\label{eq4}
	\mu_{s}(\tilde{A})=L(\tilde{A}_s)=L(\theta_{s\alpha}^{-1}\tilde{A}_s)=L(\tilde{A}_0)=\mu_0(\tilde{A})
	\text{ for all } s\in [0,1).	
	\end{equation}
	It turns out that $\bar{\mu}(\tilde{A})=\int_{0}^{1}\mu_{s}(\tilde{A})ds=\mu_0(\tilde{A})=0$ or 1, which means the dynamical system $(\tilde{\Omega}, \tilde{\mathcal{F}}, \bar{\mu}, (\tilde{\theta}^{\alpha}_t)_{t\geq 0})$ is ergodic.
\end{proof}

\begin{remark}
	It is easy to see that the dynamical system $(\tilde{\Omega}, \tilde{\mathcal{F}}, \bar{\mu}, ((\tilde{\theta}^{\alpha}_1)^n)_{n\geq 0})$ is not ergodic considering the first coordinate of the mapping. This provides an example that the continuous time dynamical system is ergodic, but its discretization may not be.
\end{remark}

\subsection{Ergodicity of skew product dynamical systems in the case of invariant measures}
The following result is well-known:
\begin{theorem}(\cite{Arnold})
	\label{SIM}
	If a random dynamical system $\Phi: \mathbb{R}^+\times \Omega\times \mathbb{X} \rightarrow \mathbb{X}$ has a stationary path $Y: \Omega\rightarrow \mathbb{X}$, it has an invariant measure on $(\bar{\Omega}, \bar{\mathcal{F}})$, $\mu \in \mathcal{P}_{P}(\Omega\times \mathbb{X})$ defined by
	\begin{equation}
	\label{IMmu}
		\mu(A)=\int_{\Omega}\delta_{Y(\omega)}(A_{\omega})P(d\omega),
	\end{equation} 
	where $A_{\omega}$ is the $\omega$-section of $A$ and $\mu_{\omega}=\delta_{Y(\omega)}$.
\end{theorem}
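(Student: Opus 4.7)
My plan is to verify directly the two defining properties of an invariant measure of $\Phi$ with $P$-marginal, recognizing that this is essentially the time-independent specialization of Theorem \ref{RPPM}.

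First, I would check that $\mu \in \mathcal{P}_P(\Omega \times \mathbb{X})$ by evaluating (\ref{IMmu}) on rectangles $B \times \mathbb{X}$ with $B \in \mathcal{F}$: since $A_\omega = \mathbb{X}$ for $\omega \in B$ and $A_\omega = \emptyset$ otherwise, $\delta_{Y(\omega)}(A_\omega) = \mathbf{1}_B(\omega)$, so $\mu(B \times \mathbb{X}) = P(B)$. Measurability of $\omega \mapsto \delta_{Y(\omega)}(A_\omega)$ for general $A \in \bar{\mathcal{F}}$ follows from the $\mathcal{F}$-measurability of $Y$ by a standard monotone-class/Fubini argument, and $\sigma$-additivity of $\mu$ follows from monotone convergence.

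For the invariance $\bar{\Theta}_t \mu = \mu$, the key step is to compute the $\omega$-section
\begin{equation*}
(\bar{\Theta}_t^{-1} A)_\omega = \{x \in \mathbb{X} : (\theta_t\omega, \Phi(t,\omega)x) \in A\} = \Phi(t,\omega)^{-1}(A_{\theta_t\omega}).
\end{equation*}
Substituting this identity into the definition of $\mu$ and then invoking the stationary path relation $\Phi(t,\omega)Y(\omega) = Y(\theta_t\omega)$, the integrand rewrites as $\mathbf{1}_{A_{\theta_t\omega}}(Y(\theta_t\omega)) = \delta_{Y(\theta_t\omega)}(A_{\theta_t\omega})$; the $P$-invariance of $\theta_t$ then immediately delivers $(\bar{\Theta}_t \mu)(A) = \mu(A)$.

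I anticipate no serious obstacle: the argument is essentially bookkeeping on $\omega$-sections, together with the two defining properties of $Y$ (as a stationary path) and of $\theta$ (as measure-preserving). The only point requiring a little care is the measurability justification at the outset. Alternatively and more compactly, one may simply observe that a stationary path is precisely a random periodic path with $Y(s,\omega) \equiv Y(\omega)$ independent of $s$, so that (\ref{IMmu}) is the specialization of (\ref{PMRDS}) to this case and the theorem is already contained in Theorem \ref{RPPM}.
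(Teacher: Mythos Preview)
Your proposal is correct. The paper does not actually prove Theorem~\ref{SIM}: it introduces the statement with ``The following result is well-known'' and gives no argument, so there is nothing to compare against. Both of your routes---the direct section-wise verification and the observation that a stationary path is a random periodic path with $Y(s,\omega)\equiv Y(\omega)$, making (\ref{IMmu}) the $s$-independent specialization of (\ref{PMRDS}) in Theorem~\ref{RPPM}---are valid and standard.
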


\begin{theorem}
	\label{SPET}
	Assume that the random dynamical system $\Phi$ has a stationary path $Y$. Then the metric dynamical system $(\Omega, \mathcal{F}, P, (\theta_{t})_{t\geq 0})$ is ergodic if and only if the skew product dynamical system $(\bar{\Omega}, \bar{\mathcal{F}}, \mu, (\bar{\Theta}_t)_{t\geq 0})$ is ergodic, where $\mu$ is defined by (\ref{IMmu}).
\end{theorem}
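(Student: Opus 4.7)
The argument should closely mirror the proofs of Theorems \ref{ET} and \ref{Converse ET}, but it is in fact simpler: because $Y$ is stationary, the $\omega$-section of $\mu$ is $\delta_{Y(\omega)}$ with no $s$-dependence, so there is no need to pass from continuous time to integral multiples of a period $\tau$. The plan is to handle each direction by lifting/projecting invariant sets between $\Omega$ and $\bar{\Omega}$, exactly as in the periodic proofs, and then applying the respective ergodicity assumption.

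For the direction ``$\bar{\Theta}_t$-ergodic $\Rightarrow$ $\theta_t$-ergodic,'' I would take any $F \in \mathcal{F}$ with $\theta_t^{-1}F = F$ for all $t\geq 0$ and lift it to $\bar{F} := F \times \mathbb{X} \in \bar{\mathcal{F}}$. A direct computation, formally identical to (\ref{key4}), gives
\begin{equation*}
\bar{\Theta}_t^{-1}\bar{F} = \{(\omega,x) : (\theta_t\omega, \Phi(t,\omega)x) \in F\times \mathbb{X}\} = (\theta_t^{-1}F)\times \mathbb{X} = \bar{F},
\end{equation*}
and the formula (\ref{IMmu}) yields $\mu(\bar F) = \int_\Omega \I_F(\omega)\,P(d\omega) = P(F)$. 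Ergodicity of the skew product then forces $P(F) \in \{0,1\}$.

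For the converse direction, given any $A \in \bar{\mathcal F}$ with $\bar{\Theta}_t^{-1}A = A$ for all $t\geq 0$, I would set $A^* := \{\omega : (\omega, Y(\omega)) \in A\}$. From (\ref{IMmu}) one has
\begin{equation*}
\mu(A) = \int_\Omega \I_{A_\omega}(Y(\omega))\,P(d\omega) = P(A^*).
\end{equation*}
The key step is to show $A^*$ is $\theta_t$-invariant, using the stationarity identity $Y(\theta_t\omega) = \Phi(t,\omega)Y(\omega)$:
\begin{equation*}
\theta_t^{-1}A^* = \{\omega : (\theta_t\omega, Y(\theta_t\omega)) \in A\}
= \{\omega : \bar{\Theta}_t(\omega, Y(\omega)) \in A\}
= \{\omega : (\omega, Y(\omega)) \in \bar{\Theta}_t^{-1}A\} = A^*.
\end{equation*}
Ergodicity of $(\Omega,\mathcal F,P,(\theta_t))$ then gives $P(A^*) \in \{0,1\}$, hence $\mu(A) \in \{0,1\}$.

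There is no real obstacle here; the only subtlety is verifying that the stationarity relation can indeed replace the double role that the periodicity of $Y$ played in the previous proofs, so that invariance under continuous-time $\bar{\Theta}_t$ transfers to continuous-time $\theta_t$-invariance of $A^*$ (and vice versa), without the discrete-time restriction $t=n\tau$ that was essential in Theorems \ref{ET} and \ref{Converse ET}.
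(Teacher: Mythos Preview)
Your proposal is correct and follows essentially the same approach as the paper's own proof: the paper also lifts a $\theta_t$-invariant set $F$ to $\bar F = F\times\mathbb{X}$ for one direction and projects a $\bar\Theta_t$-invariant set $A$ to $A_0:=\{\omega:(\omega,Y(\omega))\in A\}$ for the other, with the identical chain of equalities using $Y(\theta_t\omega)=\Phi(t,\omega)Y(\omega)$. The only cosmetic difference is that you present the two directions in the opposite order.
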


\begin{proof}
	First prove the ``$\Rightarrow$" part. Assume $(\Omega, \mathcal{F}, P, (\theta_{t})_{t\geq 0})$ is ergodic.
	Since $\mu$ is an invariant measure on $(\bar{\Omega}, \bar{\mathcal{F}})$, so $\bar{\Theta}_{t}$ preserves the measure $\mu$. To prove $(\bar{\Omega}, \bar{\mathcal{F}}, \mu, (\bar{\Theta}_t)_{t\geq 0})$ is ergodic, we need to prove that for any $A\in \bar{\mathcal{F}}$ with $\bar{\Theta}_{t}^{-1}A=A \text{ for all } t\geq 0$, $\mu(A)=0$ or 1. Set $A_{0}:=\{\omega: (\omega, Y(\omega))\in A\}$, then for any $t \geq 0$, we have
	\begin{equation*}
	\begin{split}
	\theta_{t}^{-1}A_{0}&=\{\omega: \theta_{t}\omega \in A_0\}\\
	                             &=\{\omega: (\theta_{t}\omega, Y(\theta_{t}\omega))\in A\}\\
	                             &=\{\omega: \bar{\Theta}_{t}(\omega, Y(\omega))\in A\}\\
	                             &=\{\omega: (\omega, Y(\omega))\in \bar{\Theta}_{t}^{-1}A\}\\
	                             &=\{\omega: (\omega, Y(\omega))\in A\}\\
	                             &=A_{0}.\\
	\end{split}
	\end{equation*}
	This means that $A_{0}$ is an invariant set with respect to $(\theta_{t})_{t\geq 0}$. Note that
	$$\mu(A)=\int_{\Omega}\delta_{Y(\omega)}(A_{\omega})P(d\omega)=P(\{\omega:(\omega, Y(\omega))\in A\})=P(A_0).$$
	By the ergodicity of $(\Omega, \mathcal{F}, P, (\theta_{t})_{t\geq 0})$, we have $P(A_{0})=0$ or 1. Therefore $\mu(A)=0$ or 1.\\
	
	Now we prove the ``$\Leftarrow$" part. Assume $(\bar{\Omega}, \bar{\mathcal{F}}, \mu, (\bar{\Theta}_t)_{t\geq 0})$ is ergodic. For any $F\in \mathcal{F}$ with $\theta_{t}^{-1}F=F$ for all $t\geq 0$, we consider $\bar{F}=F\times\mathbb{X}\in \bar{\mathcal{F}}$, then for any $t\geq 0$
	\begin{equation*}
		\begin{split}
		\bar{\Theta}_{t}^{-1}\bar{F}&=\{(\omega, x): \bar{\Theta}_{t}(\omega, x)\in \bar{F}\}\\
		&=\{(\omega, x): (\theta_{t}\omega, \Phi(t, \omega)x)\in F\times \mathbb{X}\}\\
		&=\{(\omega, x): \theta_{t}\omega \in F\}\\
		&=(\theta_{t}^{-1}F)\times \mathbb{X} = F\times \mathbb{X}=\bar{F}.\\
		\end{split}
	\end{equation*}
	Since $$\mu(\bar{F})=P(\{\omega: (\omega, Y(\omega))\in F\times \mathbb{X}\})=P(F),$$
	by the ergodicity of $(\bar{\Omega}, \bar{\mathcal{F}}, \mu, (\bar{\Theta})_{t\geq 0})$, we have
	$$P(F)=\mu(\bar{F})=0 \text{ or }1.$$
\end{proof}

\section{Ergodicity of canonical Markovian systems from periodic measures: necessary and sufficient conditions}
\subsection{ Ergodicity of periodic measures in Markovian setting}

Now we consider a Markovian cocycle random dynamical system $\Phi$ on a filtered dynamical system $(\Omega, \mathcal{F}, P, (\theta_{t})_{t \in \mathbb{R}}, (\mathcal{F}_s^t)_{s \leq t})$, i.e. $\mathcal{F}_s^t\subset \mathcal{F}$, assuming for any $s,t, u\in \mathbb{R}, s\leq t,$ $\theta_{u}^{-1}\mathcal{F}_s^t=\mathcal{F}_{s+u}^{t+u}$ and for any $t\in \mathbb{R}^+, \Phi(t,\cdot)$ is measurable with respect to $\mathcal{F}_0^t$. We also assume the random periodic path $Y(s)$ is adapted, that is to say that for each $s\in \mathbb{R}, Y(s,\cdot)$ is measurable with respect to $\mathcal{F}_{-\infty}^s:=\vee_{r\leq s}\mathcal{F}_r^s$.

Denote the transition probability of Markovian process $\Phi(t,\cdot)x$ on the polish space $\mathbb{X}$ with Borel $\sigma$-field $\mathcal{B}(\mathbb{X})$ by (c.f. \cite{Arnold}, \cite{Da Prato})
$$P_t(x,\Gamma)=P(\{\omega: \Phi(t, \omega)x\in \Gamma\}), \quad t\in \mathbb{R}^+, \quad \Gamma\in \mathcal{B}(\mathbb{X}).$$

Denote by $L_{b}(\mathbb{X})$ the set of all real-valued bounded Borel measurable functions defined on $\mathbb{X}$ and $\mathcal{P}(\mathbb{X})$ be the set of all probability measures defined on $(\mathbb{X}, \mathcal{B}(\mathbb{X}))$. For any $t \geq 0$ and $\rho \in \mathcal{P}(\mathbb{X})$ we set 
$$P_{t}^*\rho(\Gamma)=\int_{\mathbb{X}}P_{t}(x, \Gamma)\rho(dx), \quad \Gamma \in \mathcal{B}(\mathbb{X}),$$
and for any $\varphi \in L_b(\mathbb{X})$, define
\begin{equation}
   \label{Markov semigroup}
	(P_t\varphi)(x)=\int_{\mathbb{X}}P_t(x, dy)\varphi(y), \quad x\in \mathbb{X},
\end{equation}
as a semigroup from $L_b(\mathbb{X})$ to $L_b(\mathbb{X})$.

\begin{definition}
	(\cite{F.Z2}) A measure function $\rho.: \mathbb{R}\rightarrow \mathcal{P}(\mathbb{X})$ is called a periodic measure of period $\tau$ on $(\mathbb{X}, \mathcal{B}(\mathbb{X}))$ for the Markovian semigroup $P_{t}$ if it satisfies
	\begin{equation}
	\label{rho}
		P_{t}^*\rho_{s}=\rho_{s+t} \text{ and } \rho_{s+\tau}=\rho_{s}, \text{ for all } s\in \mathbb{R}, t\in \mathbb{R}^+.
	\end{equation}
	It is called a periodic measure with minimal period $\tau$ if $\tau >0$ is the smallest number such that (\ref{rho}) holds. It is called an invariant measure if it satisfies $\rho_{s}=\rho_{0}$ for all $s\in \mathbb{R}, i.e.$ $ \rho_{0}$ is an invariant measure for the Markovian semigroup $P_{t}$ if
	\begin{equation}
		P_{t}^*\rho_{0}=\rho_{0}, \text{ for all } t\in \mathbb{R}^+.
	\end{equation}
\end{definition}

With a given Markovian semigroup $P_t, t\geq 0$, and an invariant measure $\rho\in \mathcal{P}(\mathbb{X})$, we will associate now, in the following unique way, a dynamical system $(\Omega^*, \mathcal{F}^*, (\theta^*_t)_{t\in \mathbb{R}}, \mathbb{P}^{\rho})$ on the space $\Omega^*=\mathbb{X}^{\mathbb{R}}$ of all $\mathbb{X}$-valued functions.

Define $\Omega^*=\mathbb{X}^\mathbb{R}$, the space of all $\mathbb{X}$-valued functions on $\mathbb{R}$, $\mathcal{F}^*$ is the smallest $\sigma$-algebra containing all cylindrical sets of $\Omega^*$. And the shift $\theta^*: \mathbb{R}\times \Omega^*\rightarrow \Omega^*$ defined by $(\theta_{t}^*\omega^*)(s)=\omega^*(t+s)$, for all $\omega^*\in \Omega^*.$ For an arbitrary invariant measure $\rho \in \mathcal{P}(\mathbb{X})$ and an arbitrary finite set $I=\{t_1,t_2,\cdots,t_n\}, t_1<t_2<\cdots<t_n,$ we can define a probability $\mathbb{P}^{\rho}_I$ on $(\mathbb{X}^I, \mathcal{B}(\mathbb{X}^I))$ by the formula
$$\mathbb{P}_I^{\rho}(\Gamma)= \int_{\mathbb{X}}\rho(dx_1)\int_{\mathbb{X}}P_{t_2-t_1}(x_1,dx_2)\cdots  \int_{\mathbb{X}}P_{t_n-t_{n-1}}(x_{n-1},dx_n) I_{\Gamma}(x_1,x_2,\cdots,x_n), \quad \Gamma\in \mathcal{B}(\mathbb{X}^I).$$
By the Kolmogorov extension theorem, there exists a unique probability measure $\mathbb{P}^{\rho}$ on $(\Omega^*, \mathcal{F}^*)$ such that for every finite set $I=\{t_1,t_2,\cdots,t_n\}$ and $\Gamma\in \mathcal{B}(\mathbb{X}^I)$
$$\mathbb{P}^{\rho}(\{\omega^*:(\omega^*(t_1), \omega^*(t_2), \cdots, \omega^*(t_n))\in \Gamma\}) = \mathbb{P}^{\rho}_I(\Gamma).$$
From \cite{Da Prato}, the transformations $\theta^*_t, t\in \mathbb{R}$ preserve the measure $\mathbb{P}^{\rho}$, and the quadruplet $S^{\rho}=(\Omega^*, \mathcal{F}^*, (\theta^*_t)_{t\in \mathbb{R}}, \mathbb{P}^{\rho})$ defines a dynamical system, called the \textit{canonical dynamical system} associated with $P_t, t\geq 0, \rho$ and $\theta^*_t$.
\begin{definition}
	(\cite{Da Prato}) The invariant measure $\rho$ is said to be ergodic with respect to the Markovian semigroup $P_t, t\geq 0$, if its associated canonical dynamical system $S^{\rho}=(\Omega^*, \mathcal{F}^*, (\theta^*_t)_{t\geq 0},
	 \mathbb{P}^{\rho})$ is ergodic.
\end{definition}

\begin{definition}
	(\cite{F.Z2}) The $\tau$-periodic measure $\{\rho_{s}\}_{s\in \mathbb{R}}$ is said to be PS-ergodic if for each $s\in [0, \tau), \rho_s$ as the invariant measure of the $\tau$-mesh discrete Markovian semigroup $\{P_{k\tau}\}_{k\in \mathbb{N}}$, at integral multiples of the period on the Poincar\'{e} section, is ergodic.
\end{definition}

We also recall the following theorem proved in \cite{F.Z2}.
\begin{theorem}
	\label{PM2}
	Assume the Markovian cocycle $\Phi: \mathbb{R}^+\times \Omega\times \mathbb{X} \rightarrow \mathbb{X}$ has an adapted random periodic path $Y: \mathbb{R}\times \Omega\rightarrow \mathbb{X}$. Then the measure function $\rho.: \mathbb{R}\rightarrow \mathcal{P}(\mathbb{X})$ defined by
	\begin{equation}
	\label{nkey1}
		\rho_{s}:=E_P(\mu_{s}).=E_P\delta_{Y(s, \theta_{-s}\cdot)}=E_P\delta_{Y(s, \cdot)}, \text{ for all } s\in \mathbb{R},
	\end{equation} 
	which is the law of the random periodic path $Y$, is a periodic measure of the semigroup $P_t$ on $(\mathbb{X}, \mathcal{B}(\mathbb{X}))$. Its time average $\bar{\rho}$ over a time interval of exactly one period defined by 
	\begin{equation}
	\label{time average measure rho}
		\bar{\rho}=\frac{1}{\tau}\int_{0}^{\tau}\rho_sds,
	\end{equation}
	is an invariant measure and satisfies that for any $\Gamma \in \mathcal{B}(\mathbb{X}), t\in \mathbb{R},$
	\begin{equation*}
	\begin{split}
	\bar{\rho}(\Gamma)&=E_P\left[\frac{1}{\tau}\{s\in [0, \tau): Y(s,\cdot)\in \Gamma\}\right]\\
	                             &=E_P\left[\frac{1}{\tau}\{s\in [t, t+\tau): Y(s,\cdot)\in \Gamma\}\right].\\
	\end{split}
	\end{equation*}
\end{theorem}

\subsection{Ergodic canonical dynamical systems generated from periodic measure}
Given a random periodic path $Y$ of the Markovian cocycle $\Phi$, for any given $s\in \mathbb{R}$, define
$$L_s^{\omega}:=\{Y(s+k\tau, \omega): k\in \mathbb{Z}\},$$
and the $\rho_s$-invariant set $\mathcal{I}_s^{\tau}$ associated with the discrete Markovian semigroup $(P_{\tau}^k)_{k\in \mathbb{N}},$  
$$\mathcal{I}_s^{\tau}:=\{\Gamma\in \mathcal{B}(\mathbb{X}): P_{\tau}I_{\Gamma}=I_{\Gamma}, \rho_{s}-a.s.\}.$$
For these two sets, we consider\\
\textbf{Condition A.}  \textit{For any $s\in \mathbb{R}$, $\Gamma\in \mathcal{I}_s^{\tau}$, one has for $P$-almost all $\omega\in \Omega$, either $L_s^{\omega}\cap \Gamma=\emptyset$ or $L_s^{\omega}\subseteq \Gamma$.}

\begin{theorem}
	\label{NewET}
	Assume that the random periodic path $Y$ satisfies Condition A and the periodic measure $\rho.: \mathbb{R}\rightarrow \mathcal{P}(\mathbb{X})$ is given in Theorem \ref{PM2}. Then if the dynamical system $(\Omega, \mathcal{F}, P, (\theta_{\tau}^n)_{n\geq 0})$ is ergodic, the $\tau$-periodic measure $\{\rho_s\}_{s\in \mathbb{R}}$ defined in (\ref{nkey1}) is PS-ergodic, and hence $\bar{\rho}$ defined in \eqref{time average measure rho} is ergodic.
\end{theorem}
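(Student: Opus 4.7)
Fix $s\in[0,\tau)$ and $\Gamma\in\mathcal{I}_s^{\tau}$; to establish PS-ergodicity, we must show $\rho_s(\Gamma)\in\{0,1\}$. The natural candidate for a $\theta_\tau$-invariant event on $\Omega$ is
\[
F_s:=\{\omega\in\Omega:Y(s,\omega)\in\Gamma\}.
\]
By (\ref{nkey1}) we have $\rho_s(\Gamma)=P(F_s)$, so everything reduces to showing $F_s$ is $\theta_\tau$-invariant (mod $P$) and invoking the ergodicity hypothesis on $(\Omega,\mathcal{F},P,(\theta_\tau^n)_{n\geq 1})$.

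The first main step uses Condition A to pass from ``$Y(s,\omega)\in\Gamma$'' to a statement about the whole section $L_s^{\omega}$. For $P$-a.e.\ $\omega$, either $L_s^{\omega}\subseteq\Gamma$ or $L_s^{\omega}\cap\Gamma=\emptyset$, and since $Y(s,\omega)$ and $Y(s+\tau,\omega)$ both belong to $L_s^{\omega}$, we obtain the equivalence
\[
Y(s,\omega)\in\Gamma \;\Longleftrightarrow\; Y(s+\tau,\omega)\in\Gamma\qquad P\text{-a.s.}
\]
The random periodic identity $Y(s+\tau,\omega)=Y(s,\theta_\tau\omega)$ then rewrites the right-hand side as $Y(s,\theta_\tau\omega)\in\Gamma$, i.e.\ $\theta_\tau\omega\in F_s$. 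Hence $F_s\,\triangle\,\theta_\tau^{-1}F_s$ is $P$-null, and ergodicity of $(\Omega,\mathcal{F},P,(\theta_\tau^n)_{n\geq 1})$ forces $P(F_s)\in\{0,1\}$. Applying this for every $s\in[0,\tau)$ gives the PS-ergodicity of $\{\rho_s\}_{s\in\mathbb{R}}$.

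For the ``hence $\bar{\rho}$ is ergodic'' clause, take any $P_t$-invariant set $\Gamma$ (i.e.\ $P_t\mathbf{1}_\Gamma=\mathbf{1}_\Gamma$ $\bar{\rho}$-a.s.\ for every $t\geq 0$). Since $\bar{\rho}=\tfrac{1}{\tau}\int_0^\tau \rho_s\,ds$, the relation $P_\tau\mathbf{1}_\Gamma=\mathbf{1}_\Gamma$ holds $\rho_s$-a.s.\ for Lebesgue-a.e.\ $s\in[0,\tau)$, so $\Gamma\in\mathcal{I}_s^\tau$ for such $s$, and the PS-ergodicity just proved gives $\rho_s(\Gamma)\in\{0,1\}$ for Lebesgue-a.e.\ $s$. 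Moreover, using $P_t\mathbf{1}_\Gamma=\mathbf{1}_\Gamma$ $\rho_s$-a.s.\ together with the intertwining $\rho_{s+t}=P_t^*\rho_s$, we get
\[
\rho_{s+t}(\Gamma)=\int_{\mathbb{X}} P_t\mathbf{1}_\Gamma\,d\rho_s=\int_{\mathbb{X}}\mathbf{1}_\Gamma\,d\rho_s=\rho_s(\Gamma)
\]
for a.e.\ $s$ and every $t\geq 0$, so the map $s\mapsto\rho_s(\Gamma)$ is essentially constant with value in $\{0,1\}$, and $\bar{\rho}(\Gamma)\in\{0,1\}$ follows by integration.

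\textbf{Expected difficulty.} The computation itself is short, but the subtlety is ensuring that Condition A, which is phrased $\omega$-wise on $L_s^\omega$, correctly yields a $P$-null symmetric difference between $F_s$ and $\theta_\tau^{-1}F_s$ rather than merely a set-theoretic inclusion; this is what allows us to use ergodicity of the discrete shift $\theta_\tau$ in its standard mod-$P$ form. A secondary point worth care is the last paragraph: one must combine the $P_t$-invariance of $\Gamma$ with the periodic-measure intertwining $\rho_{s+t}=P_t^*\rho_s$ to eliminate the $s$-dependence of $\rho_s(\Gamma)$ almost everywhere, since PS-ergodicity alone only controls $\rho_s(\Gamma)$ pointwise for a.e.\ $s$, not uniformly.
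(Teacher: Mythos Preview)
Your argument is correct and in fact more direct than the paper's. The paper first invokes Theorem~\ref{ET} to lift the ergodicity of $(\Omega,\mathcal{F},P,(\theta_\tau^n))$ to the skew product $(\bar\Omega,\bar{\mathcal F},\mu_s,(\bar\Theta_\tau^n))$, and then shows that $\Omega\times\Gamma$ is $\mu_s$-almost invariant under $\bar\Theta_\tau$ by decomposing $\bar\Theta_\tau^{-1}(\Omega\times\Gamma)$ over $x\in\Gamma$ and $x\notin\Gamma$; Condition~A enters exactly to show $P(\{Y(s,\theta_{-s}\omega)\in\Gamma,\,Y(s+\tau,\theta_{-s}\omega)\in\Gamma\})=P(\{Y(s,\theta_{-s}\omega)\in\Gamma\})$ and the analogous vanishing for the ``cross'' piece. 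You bypass the skew product entirely: the set $F_s=\{Y(s,\cdot)\in\Gamma\}$ lives on the base, Condition~A together with $Y(s+\tau,\omega)=Y(s,\theta_\tau\omega)$ immediately gives $P(F_s\triangle\theta_\tau^{-1}F_s)=0$, and the hypothesis on $(\theta_\tau^n)$ applies directly. The two routes use Condition~A in the same way; yours simply avoids the auxiliary passage through $\mu_s$ and Theorem~\ref{ET}, at the cost of not tying the result to the skew-product framework the paper builds.

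For the final clause, the paper just cites an external result (Theorem~1.3.6 in \cite{F.Z2}) that PS-ergodicity implies ergodicity of $\bar\rho$. Your direct argument is fine, but be careful with the quantifiers: from $P_t\mathbf 1_\Gamma=\mathbf 1_\Gamma$ $\bar\rho$-a.s.\ you get, for each fixed $t$, that $\rho_{s+t}(\Gamma)=\rho_s(\Gamma)$ for Lebesgue-a.e.\ $s$, not the stronger ``for a.e.\ $s$ and every $t$'' as written. One more Fubini step on $[0,\tau)\times[0,\tau)$ (or restriction to a countable dense set of $t$) is needed to conclude that $s\mapsto\rho_s(\Gamma)$ is essentially constant; once that is said, your conclusion $\bar\rho(\Gamma)\in\{0,1\}$ follows.
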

\begin{proof}
	Since $(\Omega, \mathcal{F}, P, (\theta_{\tau}^n)_{n\geq 0})$ is ergodic, by Theorem \ref{ET} we know that $(\bar{\Omega}, \bar{\mathcal{F}}, \mu_{s}, (\bar{\Theta}_{\tau}^n)_{n\geq 0})$ is ergodic. By definition of PS-ergodic, we will show that $\rho_{s}$ is ergodic associated with the discrete Markovian semigroup $\{P_{\tau}^k\}_{k\in \mathbb{N}}$. Equivalently, we need to show that for any $\Gamma\in \mathcal{B}(\mathbb{X})$, if $P_{\tau}I_{\Gamma}=I_{\Gamma}, \rho_{s}-a.s.$, then $\rho_{s}(\Gamma)=0$ or 1.
	
	For any $A\in \bar{\mathcal{F}}$ and $\Gamma\in \mathcal{B}(\mathbb{X})$, since $\mu_{s}(A)=P(\{\omega: (\omega, Y(s, \theta_{-s}\omega))\in A\})$ and $\rho_{s}(\Gamma)=P(\{\omega: Y(s, \theta_{-s}\omega)\in \Gamma\})$, then $\rho_{s}(\Gamma)=\mu_{s}(\Omega\times \Gamma)$.
	Next we consider the subset $\Omega\times \Gamma$ in $\bar{\Omega}$.
	Note
	\begin{equation*}
	\begin{split}
	\bar{\Theta}_{\tau}^{-1}(\Omega\times \Gamma)
	      &=\{(\omega, x): \bar{\Theta}_{\tau}(\omega, x)\in \Omega\times \Gamma\}\\
	      &=\{(\omega, x): (\theta_{\tau}\omega, \Phi(\tau, \omega)x)\in \Omega\times \Gamma\}\\
	      &=\{(\omega, x): \Phi(\tau, \omega)x \in \Gamma\}.\\
	\end{split}
	\end{equation*}
	Define $\Omega_{\tau, \Gamma}^x:=\{\omega: \Phi(\tau, \omega)x\in \Gamma\}$, then we have
	\begin{equation}
	\label{eq1}
    \begin{split}
    \bar{\Theta}_{\tau}^{-1}(\Omega\times \Gamma)
          =\bigcup_{x\in \mathbb{X}}(\Omega_{\tau, \Gamma}^x\times \{x\})
          =\left(\bigcup_{x\in \Gamma}(\Omega_{\tau, \Gamma}^x\times \{x\})\right)\bigcup \left(\bigcup_{x\notin \Gamma}(\Omega_{\tau, \Gamma}^x\times \{x\})\right).
    \end{split}
    \end{equation}	
    It is easy to see that $\bigcup_{x\in \Gamma}(\Omega_{\tau, \Gamma}^x\times \{x\})$ is a subset of $\Omega\times \Gamma$, and
    \begin{equation*}
    \begin{split}
    \mu_{s}\left(\bigcup_{x\in \Gamma}(\Omega_{\tau, \Gamma}^x\times \{x\})\right)
         &=\int_{\Omega\times \mathbb{X}}I_{\cup_{x\in \Gamma}(\Omega_{\tau, \Gamma}^x\times \{x\})}(\omega, x)\delta_{Y(s, \theta_{-s}\omega)}(dx)P(d\omega)\\
         &=\int_{\Omega}I_{\cup_{x\in \Gamma}(\Omega_{\tau, \Gamma}^x\times \{x\})}(\omega, Y(s,\theta_{-s}\omega))P(d\omega)\\
         &=P\left(\{\omega: (\omega, Y(s,\theta_{-s}\omega))\in\bigcup_{x\in \Gamma}(\Omega_{\tau, \Gamma}^x\times \{x\})\}\right)\\
         &=P\left(\{\omega: Y(s,\theta_{-s}\omega)\in \Gamma, \omega \in \Omega_{\tau, \Gamma}^{Y(s, \theta_{-s}\omega)}\}\right)\\
         &=P\left(\{\omega: Y(s,\theta_{-s}\omega)\in \Gamma, \Phi(\tau, \omega)Y(s,\theta_{-s}\omega)\in \Gamma\}\right)\\
         &=P\left(\{\omega: Y(s,\theta_{-s}\omega)\in \Gamma, Y(s+\tau,\theta_{-s}\omega)\in \Gamma\}\right).\\
    \end{split}
    \end{equation*}
    Suppose now that $\Gamma\in \mathcal{I}_s^{\tau}$, and Condition A holds, we have
    $$P\left(\{\omega: Y(s,\theta_{-s}\omega)\in \Gamma, Y(s+\tau,\theta_{-s}\omega)\in \Gamma\}\right)=P\left(\{\omega: Y(s,\theta_{-s}\omega)\in \Gamma\}\right).$$
    So
    \begin{equation*}
    \begin{split}
    \mu_{s}\left(\bigcup_{x\in \Gamma}(\Omega_{\tau, \Gamma}^x\times \{x\})\right)
         =P\left(\{\omega: Y(s,\theta_{-s}\omega)\in \Gamma\}\right)
         =\rho_{s}(\Gamma)
         =\mu_{s}(\Omega\times \Gamma).
    \end{split}
    \end{equation*}
    Hence
    \begin{equation}
    \label{*}
    	\mu_{s}\left((\Omega\times \Gamma)\setminus \bigcup_{x\in \Gamma}(\Omega_{\tau, \Gamma}^x\times \{x\})\right)=0.
    \end{equation}
    Similarly, for the set $\bigcup_{x\notin \Gamma}(\Omega_{\tau, \Gamma}^x\times \{x\})$,
    \begin{equation}
    \begin{split}
    \label{**}
    \mu_{s}\left(\bigcup_{x\notin \Gamma}(\Omega_{\tau, \Gamma}^x\times \{x\})\right)
    &=P\left(\{\omega: (\omega, Y(s,\theta_{-s}\omega))\in\bigcup_{x\notin \Gamma}(\Omega_{\tau, \Gamma}^x\times \{x\})\}\right)\\
    &=P\left(\{\omega: Y(s,\theta_{-s}\omega)\notin \Gamma, \omega \in \Omega_{\tau, \Gamma}^{Y(s, \theta_{-s}\omega)}\}\right)\\
    &=P\left(\{\omega: Y(s,\theta_{-s}\omega)\notin \Gamma, Y(s+\tau,\theta_{-s}\omega)\in \Gamma\}\right)\\
    &=0.\\
    \end{split}
    \end{equation}
    The last equality is due to Condition A. Since it follows from (\ref{eq1}) that
    \begin{equation*}
    \begin{split}
     (\Omega\times \Gamma) \bigtriangleup \bar{\Theta}_{\tau}^{-1}(\Omega\times \Gamma)
        &=\left((\Omega\times \Gamma)\setminus \bar{\Theta}_{\tau}^{-1}(\Omega\times \Gamma)\right)\cup \left(\bar{\Theta}_{\tau}^{-1}(\Omega\times \Gamma)\setminus (\Omega\times \Gamma)\right)\\
        &\subseteq \left((\Omega\times \Gamma)\setminus \bigcup_{x\in \Gamma}(\Omega_{\tau, \Gamma}^x\times \{x\})\right)\bigcup \left(\bigcup_{x\notin \Gamma}(\Omega_{\tau, \Gamma}^x\times \{x\})\right),\\
    \end{split}
    \end{equation*}
    then from (\ref{*}) and (\ref{**}), we have
    $$\mu_{s}((\Omega\times \Gamma) \bigtriangleup \bar{\Theta}_{\tau}^{-1}(\Omega\times \Gamma))=0.$$
    Applying equivalent condition of the ergodicity of the dynamical system $(\bar{\Omega}, \bar{\mathcal{F}}, \mu_{s}, (\bar{\Theta}_{\tau}^n)_{n\geq 0})$ (c.f. \cite{Walters}), we have
    $$\rho_{s}(\Gamma)=\mu_{s}(\Omega\times \Gamma)=0 \text{ or } 1.$$
    So  $\rho_{s}$ is ergodic associated with the discrete Markovian semigroup $\{P_{\tau}^k\}_{k\in \mathbb{N}}$ for all $s\in \mathbb{R}$ and therefore $\{\rho_{s}\}_{s\in \mathbb{R}}$ is PS-ergodic. Finally by Theorem 1.3.6 in \cite{F.Z2}, we know that $\bar{\rho}$ is ergodic.
\end{proof}

\begin{theorem}
	\label{Add1}
	Assume Condition P. If the $\tau$-periodic measure $\{\rho_{s}\}_{s\in \mathbb{R}}$ defined in (\ref{nkey1}) is PS-ergodic, then for any given $s\in \mathbb{R}, \Gamma \in \mathcal{I}_s^{\tau}$, we have either for $P$-almost all $\omega$, $L_s^{\omega}\cap \Gamma=\emptyset$ or for $P$-almost all $\omega$, $L_s^{\omega}\subseteq \Gamma$.
\end{theorem}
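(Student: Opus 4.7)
The plan is to unwind PS-ergodicity into the dichotomy $\rho_s(\Gamma)\in\{0,1\}$, and then use the identity $Y(s+k\tau,\omega)=Y(s,\theta_{k\tau}\omega)$ together with the fact that each $\theta_{k\tau}$ preserves $P$ to promote that dichotomy from ``$Y(s,\cdot)\in\Gamma$ with probability $0$ or $1$'' to ``$L_s^\omega$ either misses $\Gamma$ or is contained in $\Gamma$, $P$-a.s.'' Because $\mathbb{Z}$ is countable, the quantifier swap from ``for each $k$'' to ``for all $k$ simultaneously'' is automatic.

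Fix $s\in\mathbb{R}$ and $\Gamma\in\mathcal{I}_s^\tau$, so $P_\tau I_\Gamma=I_\Gamma$ $\rho_s$-a.s., which exhibits $\Gamma$ as an invariant set of the discrete semigroup $(P_\tau^k)_{k\in\mathbb{N}}$ modulo $\rho_s$. By PS-ergodicity, $\rho_s$ is ergodic with respect to $(P_\tau^k)_{k\in\mathbb{N}}$, and the standard characterisation of ergodicity of invariant measures then yields $\rho_s(\Gamma)\in\{0,1\}$. Using $\rho_s=\mathbb{E}\delta_{Y(s,\cdot)}$ from (\ref{nkey1}) this reads $P(Y(s,\cdot)\in\Gamma)\in\{0,1\}$. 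Iterating $Y(s+\tau,\omega)=Y(s,\theta_\tau\omega)$ from (\ref{RPP}) forwards and, via the substitution $\sigma\mapsto s+k\tau$, backwards gives $Y(s+k\tau,\omega)=Y(s,\theta_{k\tau}\omega)$ for every $k\in\mathbb{Z}$. Since $\theta_{k\tau}$ preserves $P$, the law of $Y(s+k\tau,\cdot)$ is again $\rho_s$, so $P(Y(s+k\tau,\cdot)\in\Gamma)=\rho_s(\Gamma)$ for every $k\in\mathbb{Z}$.

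Splitting on the dichotomy finishes the proof. If $\rho_s(\Gamma)=0$, countable subadditivity gives $P(L_s^\omega\cap\Gamma\neq\emptyset)\leq\sum_{k\in\mathbb{Z}}P(Y(s+k\tau,\cdot)\in\Gamma)=0$, so $L_s^\omega\cap\Gamma=\emptyset$ $P$-a.s. If $\rho_s(\Gamma)=1$, each of the countably many events $\{Y(s+k\tau,\cdot)\in\Gamma\}$ has full measure and hence so does their intersection $\{L_s^\omega\subseteq\Gamma\}$. I do not anticipate a genuine obstacle: the argument is essentially a translation of PS-ergodicity through the fact that $\rho_s$ is the law of $Y(s,\cdot)$ together with the $P$-invariance of the group $(\theta_{k\tau})_{k\in\mathbb{Z}}$, with the countability of $\mathbb{Z}$ supplying the final quantifier swap.
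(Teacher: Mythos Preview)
Your proposal is correct and follows essentially the same approach as the paper. The paper introduces the set $\tilde{\Lambda}=\bigcup_{k\in\mathbb{Z}}\theta_{k\tau}^{-1}\{\omega:Y(s,\omega)\in\Gamma\}$ and argues that $P(\tilde{\Lambda})=0$ when $\rho_s(\Gamma)=0$; since $\theta_{k\tau}^{-1}\{Y(s,\cdot)\in\Gamma\}=\{Y(s+k\tau,\cdot)\in\Gamma\}$, this is exactly your countable-subadditivity step, and the case $\rho_s(\Gamma)=1$ is handled symmetrically in both proofs.
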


\begin{proof}
	Since $\{\rho_{s}\}_{s\in \mathbb{R}}$ is PS-ergodic, then for any given $s\in \mathbb{R}$ and $\Gamma\in \mathcal{I}_s^{\tau}$, we have $\rho_{s}(\Gamma)=0$ or 1. By Theorem \ref{PM2}, we have $\rho_s(\Gamma)=P(\{\omega: Y(s, \omega)\in \Gamma\})$.
	
	Let $\Lambda=\{\omega: Y(s, \omega)\in \Gamma\}$ and $\tilde{\Lambda}=\bigcup_{k\in \mathbb{Z}}\theta_{k\tau}^{-1}\Lambda$. It is easy to see that for any $\omega \in \tilde{\Lambda}^c$, $\theta_{k\tau}\omega \in \tilde{\Lambda}^c$ for all $k\in \mathbb{Z}$. Thus for any $\omega \in \tilde{\Lambda}^c$
	\begin{equation*}
		\begin{split}
		L_s^{\omega}&=\{Y(s+k\tau, \omega): k\in \mathbb{Z}\}\\
		                    &=\{Y(s, \theta_{k\tau}\omega): k\in \mathbb{Z}\}\\
		                    &\subseteq \bigcup_{\omega\in \tilde{\Lambda}^c}\{Y(s, \omega)\}.\\
		\end{split}
	\end{equation*}
	By the definition of $\Lambda$ and $\tilde{\Lambda}$, we have 
	$$\bigcup_{\omega\in \tilde{\Lambda}^c}\{Y(s, \omega)\} \subseteq \bigcup_{\omega\in \Lambda^c}\{Y(s, \omega)\} \subseteq \Gamma^c.$$ 
	This means $L_s^{\omega}\cap \Gamma=\emptyset$ for all $\omega \in \tilde{\Lambda}^c$, i.e. $\tilde{\Lambda}^c \subset \{\omega: L_s^{\omega}\cap \Gamma = \emptyset\}$. However, if $\rho_s(\Gamma)=0$, then $P(\Lambda)=\rho_s(\Gamma)=0$. It follows that $P(\tilde{\Lambda})=0$ and $P(\tilde{\Lambda}^c)=1$. Finally $P(\{\omega: L_s^{\omega}\cap \Gamma=\emptyset\})=1$.
	
	Similarly, let $\Sigma=\{\omega: Y(s, \omega)\notin \Gamma\}$ and $\tilde{\Sigma}=\bigcup_{k\in \mathbb{Z}}\theta_{k\tau}^{-1}\Sigma$. It is also easy to see that for all $\omega \in \tilde{\Sigma}^c$, then $\theta_{k\tau}\omega \in \tilde{\Sigma}^c$ for all $k\in \mathbb{Z}$ and
	$$L_s^{\omega}\subseteq \bigcup_{\omega \in \tilde{\Sigma}^c}\{Y(s, \omega)\} \subseteq \bigcup_{\omega \in \Sigma^c}\{Y(s,\omega)\} \subseteq \Gamma.$$
	This means $L_s^{\omega}\subseteq \Gamma$ for all $\omega \in \tilde{\Sigma}^c$, i.e. $\tilde{\Sigma}^c\subset \{\omega: L_s^{\omega}\subseteq \Gamma\}$. However, when $\rho_{s}(\Gamma)=1$, $P(\Sigma)=1-P(\Sigma^c)=1-\rho_s(\Gamma)=0$. Thus $P(\tilde{\Sigma})=0$ and $P(\tilde{\Sigma}^c)=1$. It then follows that $P(\{\omega: L_s^{\omega}\subseteq \Gamma\})=1$.
\end{proof}

\begin{remark}
	\label{remark}
	The result of Theorem \ref{Add1} is stronger than Condition A. This can be seen as follows: The statement that either for P-almost all $\omega$, $L_s^{\omega}\cap \Gamma=\emptyset$ or for P-almost all $\omega$, $L_s^{\omega}\subseteq \Gamma$ means either there exists $\Omega_{1}\subset \Omega$ with $P(\Omega_{1})=1$ such that when $\omega\in\Omega_{1}$, $L_s^{\omega}\cap \Gamma=\emptyset$ or there exists $\Omega_{2}\subset \Omega$ with $P(\Omega_{2})=1$ such that when $\omega\in\Omega_{2}$, $L_s^{\omega}\subseteq \Gamma$. In the first case, let $\Omega_0=\{\omega: L_s^{\omega}\cap \Gamma=\emptyset\}$. Then $\Omega_0\supset\Omega_{1}$, so $P(\Omega_0)=1$. It is obvious that when $\omega\in\Omega_0, L_s^{\omega}\cap \Gamma=\emptyset$. In the second case, let $\Omega_0=\{\omega: L_s^{\omega}\subseteq \Gamma\}$. Then $\Omega_0\supset\Omega_{2}$, so $P(\Omega_0)=1$. It is obvious that when $\omega\in\Omega_0, L_s^{\omega}\subseteq \Gamma$. In both cases, there exists $\Omega_0$ with $P(\Omega_0)=1$ such that when $\omega\in\Omega_0$, either $L_s^{\omega}\cap \Gamma=\emptyset$ or $L_s^{\omega}\subseteq \Gamma$.
\end{remark}

\begin{corollary}
	If the dynamical system $(\Omega, \mathcal{F}, P, (\theta_{\tau}^n)_{n\geq 0})$ is ergodic, then Condition A and the $\tau$-periodic measure $\{\rho_{s}\}_{s\in \mathbb{R}}$ being PS-ergodic are equivalent. In this case, the statement that P-a.s. either $L_s^{\omega}\cap \Gamma = \emptyset$ or $L_s^{\omega}\subseteq \Gamma$ and the statement that either P-a.s. $L_s^{\omega}\cap \Gamma = \emptyset$ or P-a.s. $L_s^{\omega}\subseteq \Gamma$ are equivalent.
\end{corollary}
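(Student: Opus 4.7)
The plan is to decompose the corollary into two assertions and to verify each by chaining previously established results without any new technical work.

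Assertion one is the equivalence of Condition A and the PS-ergodicity of $\{\rho_s\}_{s\in \mathbb{R}}$ under the standing ergodicity hypothesis. The forward direction is precisely Theorem \ref{NewET}, whose hypotheses (Condition P, Condition A, and ergodicity of $(\Omega, \mathcal{F}, P, (\theta_\tau^n)_{n\geq 1})$) are all met and whose conclusion is PS-ergodicity. For the converse I would appeal to Theorem \ref{Add1}: PS-ergodicity implies that for each fixed $s\in \mathbb{R}$ and each $\Gamma \in \mathcal{I}_s^\tau$, either $P(\{\omega: L_s^\omega \cap \Gamma = \emptyset\})=1$ or $P(\{\omega: L_s^\omega \subseteq \Gamma\})=1$. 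I would then take $\Omega_0$ to be whichever of these two full-measure sets actually applies and quote the bookkeeping argument of Remark \ref{remark} to obtain Condition A directly.

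Assertion two is the equivalence, under the same hypotheses, of the statements ``$P$-a.s.\ either $L_s^\omega \cap \Gamma = \emptyset$ or $L_s^\omega \subseteq \Gamma$'' and ``either $P$-a.s.\ $L_s^\omega \cap \Gamma = \emptyset$ or $P$-a.s.\ $L_s^\omega \subseteq \Gamma$''. The implication from the stronger to the weaker is immediate, essentially by the construction of the full-measure set $\Omega_0$ in Remark \ref{remark}. For the reverse, the weaker statement is literally Condition A; by assertion one this upgrades to PS-ergodicity of $\{\rho_s\}_{s\in \mathbb{R}}$, and one further application of Theorem \ref{Add1} delivers the stronger ``either $P$-a.s.\ or $P$-a.s.'' conclusion.

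The only genuine subtlety lies in the quantifier ordering between ``for $P$-a.a.\ $\omega$ one of the two alternatives holds'' and ``one of the two alternatives holds for $P$-a.a.\ $\omega$''; this is already handled explicitly in Remark \ref{remark}, so the corollary reduces to a formal logical chaining of Theorems \ref{NewET} and \ref{Add1} with that remark, and I do not anticipate any genuinely hard step.
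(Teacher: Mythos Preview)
Your proposal is correct and matches the paper's own proof, which simply cites Theorem \ref{NewET}, Theorem \ref{Add1}, and Remark \ref{remark} without further detail. You have merely made explicit the logical chaining that the paper leaves implicit.
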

\begin{proof}
	This corollary can be easily obtained from Theorem \ref{NewET}, Theorem \ref{Add1} and Remark \ref{remark}.
\end{proof}

\subsection{Ergodicity of canonical dynamical system generated from invariant measure: sufficient condition}
Next we consider a stationary path $Y: \Omega\rightarrow \mathbb{X}$ of the Markovian cocycle $\Phi$ and the invariant measure $\mu$ defined in (\ref{IMmu}). Then we recall that the measure $\rho\in\mathcal{P}(\mathbb{X})$ defined by 
 \begin{equation}
 \label{eq3}
 	\rho:=E_P(\mu).=E_P\delta_{Y(\cdot)},
 \end{equation}
which is the law of the stationary path $Y$, is an invariant measure and satisfies that $\rho(\Gamma)=P(\{\omega: Y(\omega)\in \Gamma\})$, for any $\Gamma \in \mathcal{B}(\mathbb{X})$.

Now we define
$$\tilde{L}^{\omega}:=\{Y(\theta_{s}\omega): s\in \mathbb{R}\},$$
and the $\rho$ invariant set associated with the Markovian semigroup $(P_t)_{t\geq 0}$
$$\mathcal{I}:=\{\Gamma\in \mathcal{B}(\mathbb{X}): P_tI_{\Gamma}=I_{\Gamma}, \rho-a.s. \text{ for all } t\geq 0\}.$$
\\
For these two sets, we consider\\
\textbf{Condition $\text{A}^{\prime}$.}  \textit{For any $\Gamma\in \mathcal{I}$, one has for $P$-almost all $\omega\in \Omega$, $\tilde{L}^{\omega}\cap \Gamma=\emptyset$ or $\tilde{L}^{\omega}\subseteq \Gamma$.}

\begin{theorem}
	\label{SME}
	Assume that the stationary path $Y$ satisfies Condition $A^{\prime}$ and let $\rho \in \mathcal{P}(\mathbb{X})$ be the invariant measure given in (\ref{eq3}). If the dynamical system $(\Omega, \mathcal{F}, P, (\theta_t)_{t\geq 0})$ is ergodic, then $\rho$ is ergodic.
\end{theorem}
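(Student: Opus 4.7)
The plan is to mimic the proof of Theorem \ref{NewET} in the stationary setting, with Theorem \ref{SPET} playing the role of Theorem \ref{ET}. First I would invoke Theorem \ref{SPET} to pass from ergodicity of $(\Omega, \mathcal{F}, P, (\theta_t)_{t \geq 0})$ to ergodicity of the skew-product system $(\bar{\Omega}, \bar{\mathcal{F}}, \mu, (\bar{\Theta}_t)_{t \geq 0})$. To conclude that $\rho$ is ergodic for the Markov semigroup $P_t$ it then suffices, by the standard definition of ergodicity of an invariant measure, to show that every $\Gamma \in \mathcal{I}$ satisfies $\rho(\Gamma) \in \{0,1\}$. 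Fixing such a $\Gamma$, I would work with the cylinder $\Omega \times \Gamma \in \bar{\mathcal{F}}$, noting that $\mu(\Omega \times \Gamma) = P(\{\omega: Y(\omega) \in \Gamma\}) = \rho(\Gamma)$ from the factorisation $\mu_\omega = \delta_{Y(\omega)}$.

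Next, following the computation in Theorem \ref{NewET}, I would decompose $\bar{\Theta}_t^{-1}(\Omega \times \Gamma) = \bigcup_{x \in \mathbb{X}} \Omega_{t,\Gamma}^x \times \{x\}$, where $\Omega_{t,\Gamma}^x := \{\omega: \Phi(t,\omega)x \in \Gamma\}$, and split the union into the parts $x \in \Gamma$ and $x \notin \Gamma$. Using the stationary-path identity $\Phi(t,\omega)Y(\omega) = Y(\theta_t\omega)$, the $\mu$-mass of the first part evaluates to $P(\{\omega: Y(\omega) \in \Gamma,\ Y(\theta_t\omega) \in \Gamma\})$ and the $\mu$-mass of the second part to $P(\{\omega: Y(\omega) \notin \Gamma,\ Y(\theta_t\omega) \in \Gamma\})$. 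Since both $Y(\omega)$ and $Y(\theta_t\omega)$ belong to the same section $L^\omega$, Condition $A'$ forces them, for $P$-almost every $\omega$, to lie simultaneously inside $\Gamma$ or simultaneously outside $\Gamma$. Consequently the first part has $\mu$-mass equal to $\rho(\Gamma) = \mu(\Omega \times \Gamma)$ and the second part has $\mu$-mass zero.

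From these two identities I would deduce, exactly as in the displays (\ref{*}) and (\ref{**}), that $\mu\bigl((\Omega \times \Gamma) \bigtriangleup \bar{\Theta}_t^{-1}(\Omega \times \Gamma)\bigr) = 0$ for every $t \geq 0$. The standard equivalent characterisation of ergodicity (c.f.~\cite{Walters}), applied to the ergodic system $(\bar{\Omega}, \bar{\mathcal{F}}, \mu, (\bar{\Theta}_t)_{t \geq 0})$, then forces $\mu(\Omega \times \Gamma) \in \{0,1\}$, whence $\rho(\Gamma) \in \{0,1\}$, which is the ergodicity of $\rho$.

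The main delicate point I anticipate is in the second step: although Condition $A'$ is formulated only $P$-almost surely and only for a single $\Gamma$, I need both $Y(\omega)$ and $Y(\theta_t\omega)$ to be controlled on the same full-measure event. This is legitimate because $Y(\omega)$ and $Y(\theta_t\omega)$ lie on the same orbit $L^\omega$, so the single dichotomy "$L^\omega \cap \Gamma = \emptyset$ or $L^\omega \subseteq \Gamma$" handles both at once. Once this bookkeeping is in place, the rest of the argument is a straightforward transcription of the periodic Markovian proof.
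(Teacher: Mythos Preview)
Your outline matches the paper's template exactly up to the last step: invoke Theorem \ref{SPET}, write $\rho(\Gamma)=\mu(\Omega\times\Gamma)$, decompose $\bar\Theta_t^{-1}(\Omega\times\Gamma)$ via the sets $\Omega_{t,\Gamma}^x$, and use Condition $A'$ together with $\Phi(t,\omega)Y(\omega)=Y(\theta_t\omega)$ to show that the piece over $x\in\Gamma$ has full $\mu$-mass inside $\Omega\times\Gamma$ while the piece over $x\notin\Gamma$ is $\mu$-null.

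The divergence is in how you close the argument. You stop at
\[
\mu\bigl((\Omega\times\Gamma)\bigtriangleup\bar\Theta_t^{-1}(\Omega\times\Gamma)\bigr)=0\quad\text{for every }t\ge 0,
\]
and then cite Walters for the implication ``mod-$0$ invariant $\Rightarrow$ measure $0$ or $1$''. That equivalence in \cite{Walters} is proved for a \emph{single} transformation; for a continuous-time flow it is still true, but the discrete proof does not carry over verbatim because passing from ``$\mu(\theta_t^{-1}A\bigtriangleup A)=0$ for each $t$'' to a strictly invariant modification of $A$ involves an uncountable family of null sets. The paper addresses precisely this point: it first proves Lemma \ref{lemma}, and then in the proof of Theorem \ref{SME} it does not work with a single $t$ but instead establishes the stronger statement
\[
\mu\Bigl(\bigl(\textstyle\bigcup_{t\ge T}\bar\Theta_t^{-1}(\Omega\times\Gamma)\bigr)\bigtriangleup(\Omega\times\Gamma)\Bigr)=0\quad\text{for every }T\in\mathbb{R},
\]
which is exactly the hypothesis of condition (ii) in Lemma \ref{lemma}. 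Your route is shorter if one is willing to quote the flow version of the ergodicity criterion from elsewhere; the paper's route, via Lemma \ref{lemma} and the union over $t\ge T$, is self-contained.
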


To prove Theorem \ref{SME} we need the following lemma, which is of interest in its own right. We present it using the same notation as a metric dynamical system. But it does not have to link with the metric dynamical systems of a random dynamical system. It is true for 
a setting of continuous dynamical system of a probability space (measure space).

\begin{lemma}
	\label{lemma}
	Assume that $(\Omega, \mathcal{F}, P, (\theta_{t})_{t\geq 0})$ is a dynamical system, then the following two statements are equivalent:
	\begin{enumerate}[(i)]
	   \item $(\Omega, \mathcal{F}, P, (\theta_{t})_{t\geq 0})$ is ergodic.
	   \item If $A \in \mathcal{F}$, $\bigcup_{t\geq T}\theta_t^{-1}A\in \mathcal{F}$ and $P\left((\bigcup_{t\geq T}\theta_t^{-1}A)\bigtriangleup A\right)=0$ for any $T\geq 0$, then $P(A)=0$ or 1.
\end{enumerate}
\end{lemma}

\begin{proof}
	$(i)\Rightarrow (ii)$. Assume $A\in \mathcal{F}$, $\bigcup_{t\geq T}\theta_t^{-1}A\in \mathcal{F}$ and $P\left((\bigcup_{t\geq T}\theta_t^{-1}A)\bigtriangleup A\right)=0$ for any $T\geq 0$. Define
	$$A_{\infty}:=\bigcap_{T\geq 0}\bigcup_{t\geq T}\theta_t^{-1}A,$$
	we know that 
	$$A_{\infty}=\lim_{T\rightarrow \infty}\bigcup_{t\geq T}\theta_t^{-1}A=\lim_{n\rightarrow \infty}\bigcup_{t\geq n}\theta_t^{-1}A\in \mathcal{F}.$$
	Then it is easy to see that for all $s\geq 0$,
	$$\theta_s^{-1}A_{\infty}=\bigcap_{T\geq 0}\bigcup_{t\geq T+s}\theta_t^{-1}A=A_{\infty}.$$
	Thus $A_{\infty}$ is an invariant set. By the ergodicity assumption, we have
	\begin{equation}
		\label{AIM}
		P(A_{\infty})=0 \text{ or } P(A_{\infty})=1.
	\end{equation}
	For any $T \geq 0$, since we have
	$$P\left((\bigcup_{t\geq T}\theta_t^{-1}A)\bigtriangleup A\right)=0,$$
	then
	$$P\left((\bigcup_{t\geq T}\theta_t^{-1}A)\setminus A\right)=0
	\text{ and }
	P\left(A\setminus \bigcup_{t\geq T}\theta_t^{-1}A\right)=0.$$
	But note that as $T \rightarrow \infty$, we have
	$$(\bigcup_{t\geq T}\theta_t^{-1}A)\setminus A\downarrow A_{\infty}\setminus A
	\text{ and }
	A\setminus \bigcup_{t\geq T}\theta_t^{-1}A\uparrow A\setminus A_{\infty}.$$
	By continuity of measure, we have
	$$P(A_{\infty}\setminus A)=0
	\text{ and }
	P(A\setminus A_{\infty})=0.$$
	Now recall (\ref{AIM}). Consider the case $P(A_{\infty})=0$, then
	$$P(A)=P(A)-P(A_{\infty})\leq P(A\setminus A_{\infty})=0.$$
	Now consider the case that $P(A_{\infty})=1$, then
	$$1-P(A)=P(A_{\infty})-P(A)\leq P(A_{\infty}\setminus A)=0.$$
	Thus the assertion $(ii)$ is proved.

	$(ii)\Rightarrow (i)$. Assume $A\in \mathcal{F}$ and $\theta_t^{-1}A=A$ for all $t\geq 0$. Then we have for all $T\geq 0$
	$$\bigcup_{t\geq T}\theta_t^{-1}A=A\in \mathcal{F},$$
	and
	$$P\left((\bigcup_{t\geq T}\theta_t^{-1}A)\bigtriangleup A\right)=P(\emptyset)=0.$$
	By assertion $(ii)$, we have
	$$P(A)=0 \text{ or } 1.$$
	Thus the assertion $(i)$ is proved.
\end{proof}

Next we  will give the proof of Theorem \ref{SME}.\\
\\
	\textbf{Proof of Theorem \ref{SME}}.
	Since $(\Omega, \mathcal{F}, P, (\theta_{t})_{t\geq 0})$ is ergodic, by Theorem \ref{SPET} we know that $(\bar{\Omega}, \bar{\mathcal{F}}, \mu, (\bar{\Theta}_t)_{t\geq 0})$ is ergodic. Next we just need to show that for any $\Gamma\in \mathcal{B}(\mathbb{X})$, if $P_tI_{\Gamma}=I_{\Gamma}, \text{ for all } t\geq 0, \rho-a.s.$, then $\rho(\Gamma)=0$ or 1.
	
	Similar to the proof of Theorem \ref{NewET},  for any $A\in \bar{\mathcal{F}}$ and $\Gamma\in \mathcal{B}(\mathbb{X})$, since $\mu(A)=P(\{\omega: (\omega, Y(\omega))\in A\})$ and $\rho(\Gamma)=P(\{\omega: Y( \omega)\in \Gamma\})$, then $\rho(\Gamma)=\mu(\Omega\times \Gamma)$.
	Next we consider the subset $\Omega\times \Gamma$ in $\bar{\Omega}$.
	Note that
	\begin{equation*}
	\begin{split}
	\bar{\Theta}_{t}^{-1}(\Omega\times \Gamma)
	&=\{(\omega, x): \bar{\Theta}_{t}(\omega, x)\in \Omega\times \Gamma\}\\
	&=\{(\omega, x): (\theta_{t}\omega, \Phi(t, \omega)x)\in \Omega\times \Gamma\}\\
	&=\{(\omega, x): \omega \in \Omega, \Phi(t, \omega)x \in \Gamma\}.\\
	\end{split}
	\end{equation*}
	Define $\Omega_{t, \Gamma}^x:=\{\omega: \Phi(t, \omega)x\in \Gamma\}$. Then we have
	\begin{equation*}
	\begin{split}
	\bar{\Theta}_{t}^{-1}(\Omega\times \Gamma)
	&=\bigcup_{x\in \mathbb{X}}(\Omega_{t, \Gamma}^x\times \{x\})\\
	&=\left(\bigcup_{x\in \Gamma}(\Omega_{t, \Gamma}^x\times \{x\})\right)\bigcup \left(\bigcup_{x\notin \Gamma}(\Omega_{t, \Gamma}^x\times \{x\})\right).\\
	\end{split}
	\end{equation*}
	Now for any $T \geq 0$,
	$$\bigcup_{t\geq T}\bar{\Theta}_{t}^{-1}(\Omega\times \Gamma)=\left(\bigcup_{t\geq T}\bigcup_{x\in \Gamma}(\Omega_{t, \Gamma}^x\times \{x\})\right)\bigcup \left(\bigcup_{t\geq T}\bigcup_{x\notin \Gamma}(\Omega_{t, \Gamma}^x\times \{x\})\right).$$
	Again, it is easy to see that $\bigcup_{t\geq T}\bigcup_{x\in \Gamma}(\Omega_{t, \Gamma}^x\times \{x\})$ is a subset of $\Omega\times \Gamma$, and
	\begin{equation*}
	\begin{split}
	\mu\left(\bigcup_{t\geq T}\bigcup_{x\in \Gamma}(\Omega_{t, \Gamma}^x\times \{x\})\right)
	&=\int_{\Omega\times \mathbb{X}}I_{\cup_{t\geq T}\cup_{x\in \Gamma}(\Omega_{t, \Gamma}^x\times \{x\})}(\omega, x)\delta_{Y(\omega)}(dx)P(d\omega)\\
	&=\int_{\Omega}I_{\cup_{t\geq T}\cup_{x\in \Gamma}(\Omega_{t, \Gamma}^x\times \{x\})}(\omega, Y(\omega))P(d\omega)\\
	&=P\left(\{\omega: (\omega, Y(\omega))\in\bigcup_{t\geq T}\bigcup_{x\in \Gamma}(\Omega_{t, \Gamma}^x\times \{x\})\}\right)\\
	&=P\left(\{\omega: Y(\omega)\in \Gamma \text{ and } \omega \in \Omega_{t_0, \Gamma}^{Y(\omega)} \text{ for some } t_0\geq T\}\right)\\
	&=P\left(\{\omega: Y(\omega)\in \Gamma \text{ and }\Phi(t_0, \omega)Y(\omega)\in \Gamma \text{ for some } t_0\geq T\}\right)\\
	&=P\left(\{\omega: Y(\omega)\in \Gamma \text{ and } Y(\theta_{t_0}\omega)\in \Gamma \text{ for some } t_0\geq T\}\right).\\
	\end{split}
	\end{equation*}
	Suppose now that $\Gamma\in \mathcal{I}$, and Condition $A^{\prime}$ holds, we have
	$$P\left(\{\omega: Y(\omega)\in \Gamma \text{ and } Y(\theta_{t_0}\omega)\in \Gamma \text{ for some } t_0\geq T\}\right)=P\left(\{\omega: Y(\omega)\in \Gamma\}\right).$$
	Then
	\begin{equation*}
	\begin{split}
	\mu\left(\bigcup_{t\geq T}\bigcup_{x\in \Gamma}(\Omega_{t, \Gamma}^x\times \{x\})\right)
	=P\left(\{\omega: Y(\omega)\in \Gamma\}\right)
	=\rho(\Gamma)
	=\mu(\Omega\times \Gamma).
	\end{split}
	\end{equation*}
	Hence
	\begin{equation}
	\label{***}
		\mu\left((\Omega\times \Gamma)\setminus \bigcup_{t\geq T}\bigcup_{x\in \Gamma}(\Omega_{t, \Gamma}^x\times \{x\})\right)=0. 
    \end{equation}
	Similarly, for the set $\bigcup_{t\geq T}\bigcup_{x\notin \Gamma}(\Omega_{t, \Gamma}^x\times \{x\})$,
	\begin{equation*}
	\begin{split}
	\mu\left(\bigcup_{t\geq T}\bigcup_{x\notin \Gamma}(\Omega_{t, \Gamma}^x\times \{x\})\right)
	&=P\left(\{\omega: (\omega, Y(\omega))\in\bigcup_{t\geq T}\bigcup_{x\notin \Gamma}(\Omega_{t, \Gamma}^x\times \{x\})\}\right)\\
	&=P\left(\{\omega: Y(\omega)\notin \Gamma \text{ and }\omega \in \Omega_{t_0, \Gamma}^{Y(\omega)} \text{ for some } t_0\geq T\}\right)\\
	&=P\left(\{\omega: Y(\omega)\notin \Gamma \text{ and }Y(\theta_{t_0}\omega)\in \Gamma \text{ for some } t_0\geq T\}\right),\\
	\end{split}
	\end{equation*}
	and from Condition $A^{\prime}$, we have
	\begin{equation}
	\label{****}
		\mu\left(\bigcup_{t\geq T}\bigcup_{x\notin \Gamma}(\Omega_{t, \Gamma}^x\times \{x\})\right)=P\left(\{\omega: Y(\omega)\notin \Gamma \text{ and } Y(\theta_t\omega)\in \Gamma \text{ for some } t_0\geq T\}\right)=0.
	\end{equation}
	Now note that
	\begin{equation*}
	\begin{split}
	 &\left(\bigcup_{t\geq T}\bar{\Theta}_t^{-1}(\Omega\times \Gamma)\right) \bigtriangleup (\Omega\times \Gamma)\\
	=&\left(\left(\bigcup_{t\geq T}\bar{\Theta}_t^{-1}(\Omega\times \Gamma)\right)\setminus (\Omega\times \Gamma)\right) \bigcup  \left((\Omega\times \Gamma)\setminus \bigcup_{t\geq T}\bar{\Theta}_t^{-1}(\Omega\times \Gamma)\right)\\
	\subseteq& \left(\bigcup_{t\geq T}\bigcup_{x\notin \Gamma}(\Omega_{t, \Gamma}^x\times \{x\})\right) \bigcup \left((\Omega\times \Gamma)\setminus \bigcup_{t\geq T}\bigcup_{x\in \Gamma}(\Omega_{t, \Gamma}^x\times \{x\})\right).\\
	\end{split}
	\end{equation*}
	Then for all $T\geq 0$, from (\ref{***}) and (\ref{****}), we have
	$$\mu\left( \left(\bigcup_{t\geq T}\bar{\Theta}_t^{-1}(\Omega\times \Gamma)\right) \bigtriangleup (\Omega\times \Gamma)\right)=0.$$
	Applying the ergodicity of the dynamical system $(\bar{\Omega}, \bar{\mathcal{F}}, \mu, (\bar{\Theta}_t)_{t\geq 0})$ and Lemma \ref{lemma}, we have
	$$\rho(\Gamma)=\mu(\Omega\times \Gamma)=0 \text{ or } 1.$$
	Therefore, $\rho$ is ergodic. $\qquad\qquad\qquad\qquad\qquad\qquad\qquad\qquad\qquad\qquad\qquad\qquad\qquad\qquad\qquad\qquad \square$
	
	\section{Sublinear dynamical systems from periodic measures}
	In this section, we also assume Condition P. We will give a construction of upper expectations via the periodic measures $\mu.$ and $\rho.$ defined in (\ref{PMRDS}) and (\ref{nkey1}) respectively. Then we can study the ergodicity of the sublinear expectation dynamical system and sublinear canonical dynamical system generated by the upper expectations and Markov semigroup $P_t$ defined in (\ref{Markov semigroup}).
	
	First we recall the definition of sublinear expectation space (c.f. \cite{Peng}). Let $\Omega$ be a given set and let $\mathcal{H}$ be a linear space of real valued functions defined on $\Omega$. We suppose that $\mathcal{H}$ satisfies the following two conditions:
	\begin{enumerate}[(1)]
		\item $c\in \mathcal{H}$ for each constant $c$;
		\item $|X|\in \mathcal{H}$ if $X\in \mathcal{H}$.
	\end{enumerate}
	
	\begin{definition} (\cite{Peng})
		A sublinear expectation $\mathbb{E}$ is a functional $\mathbb{E}: \mathcal{H}\to \mathbb{R}$ satisfying
		\begin{enumerate}[(i)]
			\item Monotonicity: 
			$$\mathbb{E}[X]\leq \mathbb{E}[Y] \text{ if } X\leq Y.$$
			\item Constant preserving:
			$$\mathbb{E}[c]=c \text{ for } c\in \mathbb{R}.$$
			\item Sub-additivity: For each $X, Y\in \mathcal{H}$,
			$$\mathbb{E}[X+Y]\leq \mathbb{E}[X]+\mathbb{E}[Y].$$
			\item Positive homogeneity:
			$$\mathbb{E}[\lambda X]=\lambda \mathbb{E}[X] \text{ for } \lambda\geq 0.$$
		\end{enumerate}
	\end{definition}
	
	The triplet $(\Omega,\mathcal{H},\mathbb{E})$ is called a sublinear expectation space. If $(i)$ and $(ii)$ are satisfied, $\mathbb{E}$ is called a nonlinear expectation and the triplet $(\Omega,\mathcal{H},\mathbb{E})$ is called a nonlinear expectation space.
	\subsection{Ergodic sublinear dynamical system on upper expectation space}
	
	Recall the product space $(\bar{\Omega}, \bar{\mathcal{F}})=(\Omega \times \mathbb{X}, \mathcal{F}\otimes \mathcal{B}(\mathbb{X}))$ in Section 2 and define
	\begin{equation}
	\mathbb{E}[\varphi]=\sup_{s\in [0,\tau)}E_{\mu_{s}}[\varphi], \text{ for all } \varphi \in L_{b}(\bar{\Omega}),
	\end{equation}
	where $L_{b}(\bar{\Omega}):=\{\varphi: \bar{\Omega}\rightarrow \mathbb{R}| \varphi \text{ is measurable and bounded} \}$, $\mu_s$ is the periodic measure defined in (\ref{PMRDS}), and
	\begin{equation*}
	\begin{split}
	E_{\mu_{s}}[\varphi]
	&:=\int_{\Omega\times \mathbb{X}}\varphi(\omega, x)(\mu_{s})_{\omega}(dx)P(d\omega)\\
	&=\int_{\Omega\times \mathbb{X}}\varphi(\omega, x)\delta_{Y(s, \theta_{-s}\omega)}(dx)P(d\omega)\\
	&=\int_{\Omega}\varphi(\omega, Y(s, \theta_{-s}\omega))P(d\omega).\\
	\end{split}
	\end{equation*}
	It is easy to verify that $\mathbb{E}$ is a sublinear expectation on $(\bar{\Omega}, L_{b}(\bar{\Omega}))$.
	Recall the definition (c.f. \cite{F.Z1})
	\begin{equation}
	\label{key3}
	\bar{\Theta}_t\mathbb{E}[\varphi(\cdot)]:=\mathbb{E}[\varphi(\bar{\Theta}_t\cdot)].
	\end{equation}
	
	\begin{proposition}
		\label{ISE}
		Assume Condition P. The skew product $(\bar{\Theta}_{t})_{t\geq 0}$ preserves the sublinear expectation $\mathbb{E}$.
	\end{proposition}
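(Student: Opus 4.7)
The plan is to unfold the definition of $\bar{\mathbb{E}}$ applied to $\varphi \circ \bar{\Theta}_t$, transport the inner integrals through $\bar{\Theta}_t$ using the periodic measure relation $\bar{\Theta}_t \mu_s = \mu_{t+s}$ from (\ref{PM}), and then exploit the $\tau$-periodicity $\mu_{s+\tau} = \mu_s$ to see the supremum is unchanged.

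Concretely, first I would fix $\varphi \in L_b(\bar{\Omega})$ and $t \geq 0$, and compute
\begin{equation*}
\mathbb{E}_{\mu_s}[\varphi(\bar{\Theta}_t \cdot)] \;=\; \int_{\bar{\Omega}} \varphi(\bar{\Theta}_t \bar{\omega})\,\mu_s(d\bar{\omega}) \;=\; \int_{\bar{\Omega}} \varphi(\bar{\omega})\,(\bar{\Theta}_t \mu_s)(d\bar{\omega}) \;=\; \mathbb{E}_{\mu_{t+s}}[\varphi],
\end{equation*}
where the middle equality is the standard push-forward change of variables and the last equality uses (\ref{PM}). Taking the supremum over $s \in [0,\tau)$ on both sides gives
\begin{equation*}
\bar{\mathbb{E}}[\varphi(\bar{\Theta}_t \cdot)] \;=\; \sup_{s \in [0,\tau)} \mathbb{E}_{\mu_{t+s}}[\varphi].
\end{equation*}

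Next I would dispose of the translation in the index. Write $t = k\tau + r$ with $k \in \mathbb{N}_0$ and $r \in [0,\tau)$. Since $\mu_{s+\tau} = \mu_s$ for all $s$, we have $\mu_{t+s} = \mu_{r+s}$ for every $s$. As $s$ runs over $[0,\tau)$, the value $(r+s) \bmod \tau$ runs bijectively over $[0,\tau)$, and again by $\tau$-periodicity $\mathbb{E}_{\mu_{r+s}}[\varphi] = \mathbb{E}_{\mu_{(r+s)\bmod \tau}}[\varphi]$. Therefore
\begin{equation*}
\sup_{s \in [0,\tau)} \mathbb{E}_{\mu_{t+s}}[\varphi] \;=\; \sup_{u \in [0,\tau)} \mathbb{E}_{\mu_u}[\varphi] \;=\; \bar{\mathbb{E}}[\varphi],
\end{equation*}
which establishes $\bar{\mathbb{E}}[\varphi(\bar{\Theta}_t \cdot)] = \bar{\mathbb{E}}[\varphi]$, i.e.\ $\bar{\Theta}_t \bar{\mathbb{E}} = \bar{\mathbb{E}}$ via (\ref{key3}).

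There is no real obstacle here; the proof is essentially a bookkeeping exercise combining the periodic measure identity $\bar{\Theta}_t \mu_s = \mu_{t+s}$ with $\mu_{s+\tau} = \mu_s$. The only point one must be careful about is that the supremum is taken over the half-open interval $[0,\tau)$ of length exactly one period, which is what makes the index-shift $s \mapsto (r+s)\bmod \tau$ a measure-preserving bijection on $[0,\tau)$ and hence leaves the sup invariant.
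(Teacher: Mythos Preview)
Your proof is correct and follows essentially the same approach as the paper: establish $\mathbb{E}_{\mu_s}[\varphi\circ\bar{\Theta}_t]=\mathbb{E}_{\mu_{t+s}}[\varphi]$ and then use $\tau$-periodicity to see the supremum over $s\in[0,\tau)$ is unchanged. The only cosmetic difference is that the paper verifies the first identity by unwinding the explicit representation $(\mu_s)_\omega=\delta_{Y(s,\theta_{-s}\omega)}$ and the random periodic path relations, whereas you invoke the push-forward identity $\bar{\Theta}_t\mu_s=\mu_{t+s}$ from (\ref{PM}) directly; both are legitimate under Condition~P via Theorem~\ref{RPPM}.
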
	
	\begin{proof}
		We just need to prove $\bar{\Theta}_{t}\mathbb{E}[\varphi]=\mathbb{E}[\varphi], \text{ for all } \varphi \in L_{b}(\bar{\Omega})$. To see the proof, for any $s\in \mathbb{R}$, by definition of $E_{\mu_{s}}$, we have
		\begin{equation*}
		\begin{split}
		\bar{\Theta}_{t}E_{\mu_{s}}[\varphi]
		&=E_{\mu_{s}}[\varphi\circ \bar{\Theta}_{t}]\\
		&=\int_{\Omega\times \mathbb{X}}\varphi(\bar{\Theta}_{t}(\omega, x))\delta_{Y(s, \theta_{-s}\omega)}(dx)P(d\omega)\\
		&=\int_{\Omega\times \mathbb{X}}\varphi(\theta_{t}\omega,\Phi(t, \omega) x)\delta_{Y(s, \theta_{-s}\omega)}(dx)P(d\omega)\\
		&=\int_{\Omega}\varphi(\theta_{t}\omega, \Phi(t, \omega)Y(s, \theta_{-s}\omega))P(d\omega)\\
		&=\int_{\Omega}\varphi(\theta_{t}\omega, Y(t+s, \theta_{-s}\omega))P(d\omega)\\
		&=\int_{\Omega}\varphi(\theta_{t}\omega, Y(t+s, \theta_{-(t+s)}\theta_{t}\omega))P(d\omega)\\
		&=\int_{\Omega}\varphi(\omega, Y(t+s, \theta_{-(t+s)}\omega))P(d\omega)\\
		&=E_{\mu_{t+s}}[\varphi].\\
		\end{split}
		\end{equation*}
		Then by definition of $\mathbb{E}$, the periodic property of $\mu.$ and (\ref{key3}), we have
		\begin{equation*}
		\begin{split}
		\bar{\Theta}_{t}\mathbb{E}[\varphi]
		=\sup_{s\in [0,\tau)}\bar{\Theta}_{t}E_{\mu_{s}}[\varphi]
		=\sup_{s\in [0,\tau)}E_{\mu_{t+s}}[\varphi]
		=\sup_{s\in [0,\tau)}E_{\mu_{s}}[\varphi]
		=\mathbb{E}[\varphi].
		\end{split}
		\end{equation*}
	\end{proof}
	
	Recall the definition of ergodicity of a sublinear expectation dynamical system.
	\begin{definition}
		\label{D1}
		(\cite{F.Z1}) Let $(\Omega,\mathcal{H},\mathbb{E})$ be a sublinear expectation space and the measurable transformation $\theta: [0,\infty)\times \Omega\to \Omega$ preserve the expectation $\mathbb{E}$. We say that the sublinear expectation dynamical system $(\Omega,\mathcal{H},\mathbb{E},(\theta_t)_{t\geq 0})$ is ergodic if for any $B\in \sigma(\mathcal{H})$ with $\theta_t^{-1}B=B, \text{ for all } t\geq 0$, then $\mathbb{E}[I_B]=0$ or $\mathbb{E}[I_{B^c}]=0$.
	\end{definition}
	Consider the dynamical system $(\bar{\Omega}, L_{b}(\bar{\Omega}), \mathbb{E}, (\bar{\Theta}_{t})_{t\geq 0})$ defined as above, we have
	\begin{theorem}
		\label{ENES}
		Assume Condition P. If $(\bar{\Omega}, \bar{\mathcal{F}}, \mu_{s}, (\bar{\Theta}_{\tau}^n)_{n\geq 0})$ is ergodic for some $s\in\mathbb{R}$, then $(\bar{\Omega}, L_{b}(\bar{\Omega}), \mathbb{E}, (\bar{\Theta}_{t})_{t\geq 0})$ is an ergodic sublinear expectation dynamical system.
	\end{theorem}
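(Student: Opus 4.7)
The plan is to reduce the sublinear ergodicity to the ordinary ergodicity of each $\mu_s$ via the definition $\bar{\mathbb{E}}[I_B]=\sup_{s\in[0,\tau)}\mu_s(B)$. Fix $B\in\bar{\mathcal{F}}$ with $\bar{\Theta}_t^{-1}B=B$ for every $t\geq 0$. Then in particular $\bar{\Theta}_\tau^{-1}B=B$, so $B$ is also invariant under the discrete semigroup. I would like to conclude that $\mu_s(B)\in\{0,1\}$ for \emph{every} $s\in[0,\tau)$, and further that this value is the same for all $s$; the two claims combined immediately yield the dichotomy required by Definition \ref{D1}.

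First I would upgrade the hypothesis from ``some $s$'' to ``every $s$''. By assumption $(\bar{\Omega},\bar{\mathcal{F}},\mu_{s_0},(\bar{\Theta}_\tau^n))$ is ergodic for some $s_0$. Applying Theorem \ref{Converse ET} gives that the noise system $(\Omega,\mathcal{F},P,(\theta_\tau^n))$ is ergodic, and then feeding this back into Theorem \ref{ET} gives that $(\bar{\Omega},\bar{\mathcal{F}},\mu_s,(\bar{\Theta}_\tau^n))$ is ergodic for \emph{every} $s\in\mathbb{R}$, as already noted in Remark \ref{re1}(i). Because $\bar{\Theta}_\tau^{-1}B=B$, this yields $\mu_s(B)\in\{0,1\}$ for every $s\in[0,\tau)$.

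Next I would show that $s\mapsto\mu_s(B)$ is constant. This reuses the computation carried out in the continuous-time part of the proof of Theorem \ref{ET}: invariance of $B$ under $\bar{\Theta}_t$ for every $t\geq 0$ together with the identity $\theta_t^{-1}A_s=A_{s-t}$ (where $A_s:=\{\omega:(\omega,Y(s,\theta_{-s}\omega))\in B\}$) and the fact that $P$ is $\theta_t$-invariant give
\begin{equation*}
\mu_s(B)=P(A_s)=P(\theta_t^{-1}A_s)=P(A_{s-t})=\mu_{s-t}(B)
\end{equation*}
for all $s,t$. Therefore $\mu_s(B)=\mu_0(B)$ for every $s$.

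Combining the two steps, either $\mu_s(B)=0$ for all $s\in[0,\tau)$, in which case $\bar{\mathbb{E}}[I_B]=\sup_{s\in[0,\tau)}\mu_s(B)=0$, or $\mu_s(B)=1$ for all $s\in[0,\tau)$, in which case $\mu_s(B^c)=0$ for all $s$ and $\bar{\mathbb{E}}[I_{B^c}]=0$. Combined with Proposition \ref{ISE}, which guarantees that $\bar{\Theta}_t$ preserves $\bar{\mathbb{E}}$, this establishes ergodicity of the sublinear dynamical system. There is no real obstacle here; the only thing to be careful about is that ``some $s$'' in the hypothesis is enough to trigger the chain Theorem \ref{Converse ET} $\Rightarrow$ Theorem \ref{ET}, and that the $\bar{\Theta}_t$-invariance of $B$ (not merely $\bar{\Theta}_\tau$-invariance) is exactly what is needed to make $s\mapsto\mu_s(B)$ constant so the supremum in the definition of $\bar{\mathbb{E}}$ collapses.
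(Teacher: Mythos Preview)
Your proposal is correct and follows essentially the same route as the paper: invoke Theorem \ref{Converse ET} to pass from ergodicity at some $s$ to ergodicity of the noise system $(\Omega,\mathcal{F},P,(\theta_\tau^n))$, then use the computations in the proof of Theorem \ref{ET} to obtain both $\mu_s(B)\in\{0,1\}$ and $\mu_s(B)=\mu_0(B)$ for all $s$, so that the supremum defining $\bar{\mathbb{E}}[I_B]$ collapses. The paper's proof is slightly more compressed (it cites the proof of Theorem \ref{ET} once for both facts rather than separating them), but the logical structure is identical.
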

	
	\begin{proof}
		Firstly, we know that $\bar{\Theta}_{t}$ preserves the sublinear expectation $\mathbb{E}$ from Proposition \ref{ISE}. For any $A \in \bar{\mathcal{F}}$ with $\bar{\Theta}_{t}^{-1}A=A, \text{ for all } t\geq 0$, by Definition \ref{D1}, we need to prove $V(A)=0$ or $V(A^{c})=0$ where $V(A):=\sup_{s\in [0,\tau)}\mu_{s}(A)$. Since $(\bar{\Omega}, \bar{\mathcal{F}}, \mu_{s}, (\bar{\Theta}_{\tau}^n)_{n\geq 0})$ is ergodic for some $s\in\mathbb{R}$, by Theorem \ref{Converse ET} we know that $(\Omega, \mathcal{F}, P, (\theta_{\tau}^n)_{n\geq 0})$ is ergodic. Considering the proof of Theorem \ref{ET} we have $\mu_s(A)=\mu_0(A)$ for all $s\in\mathbb{R}$ and $\mu_0(A)=0$ or 1. Then
		$$V(A)=\sup_{s\in [0,\tau)}\mu_{s}(A)=\mu_{0}(A)=0,$$
		or 
		$$V(A^c)=\sup_{s\in [0,\tau)}\mu_{s}(A^c)=\mu_{0}(A^c)=1-\mu_{0}(A)=0.$$
	\end{proof}
	
	We say a statement holds $\mathbb{E}$ quasi surely ($\mathbb{E}-q.s.$) if the statement is true on set $A$ with $\mathbb{E}[I_{A^c}]=0$.
	
	\begin{proposition}
		Assume Condition P and $(\bar{\Omega}, \bar{\mathcal{F}}, \mu_{s}, (\bar{\Theta}_{\tau}^n)_{n\geq 0})$ is ergodic for some $s\in\mathbb{R}$, Then for any $\xi \in L_{b}(\bar{\Omega})$, we have
		$$\lim_{T\rightarrow \infty}\frac{1}{T}\int_{0}^{T}U_{t}\xi dt=\frac{1}{\tau}\int_{0}^{\tau}E_{\mu_{s}}[\xi]ds, \quad \mathbb{E}-q.s.,$$
		where $(U_{t}\xi)(\omega, x)=\xi(\bar{\Theta}_{t}(\omega, x))$.
	\end{proposition}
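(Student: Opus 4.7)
The plan is to reduce the $\bar{\mathbb{E}}$-quasi-sure convergence to a $\bar{\mu}$-almost-sure convergence (which comes from the classical Birkhoff ergodic theorem), and then upgrade the null-set conclusion by exploiting the invariance properties of periodic measures.

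First, I would observe that by the assumption that $(\bar{\Omega}, \bar{\mathcal{F}}, \mu_{s}, (\bar{\Theta}_{\tau}^n)_{n\geq 0})$ is ergodic for some $s \in \mathbb{R}$, Theorem \ref{Converse ET} gives that $(\Omega, \mathcal{F}, P, (\theta_{\tau}^n)_{n\geq 0})$ is ergodic, and then Theorem \ref{ET} upgrades this to the continuous-time ergodicity of $(\bar{\Omega}, \bar{\mathcal{F}}, \bar{\mu}, (\bar{\Theta}_t)_{t\geq 0})$. Since $\xi \in L_b(\bar{\Omega})$ is $\bar{\mu}$-integrable, Birkhoff's ergodic theorem yields
$$\lim_{T\rightarrow \infty}\frac{1}{T}\int_{0}^{T}(U_{t}\xi)(\omega, x)\, dt = \mathbb{E}_{\bar{\mu}}[\xi] = \frac{1}{\tau}\int_{0}^{\tau}\mathbb{E}_{\mu_{s}}[\xi]\, ds, \qquad \bar{\mu}\text{-a.s.}$$
Denote the exceptional set by $N \subset \bar{\Omega}$, so $\bar{\mu}(N) = 0$.

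Second, I would check that $N$ is $\bar{\Theta}_t$-invariant for every $t \geq 0$. This is the standard observation that $\limsup$ and $\liminf$ of Birkhoff averages are constant along orbits: for any $r \geq 0$,
$$\frac{1}{T}\int_{0}^{T}\xi(\bar{\Theta}_{t+r}(\omega,x))\,dt = \frac{T+r}{T}\cdot\frac{1}{T+r}\int_{0}^{T+r}\xi(\bar{\Theta}_u(\omega,x))\,du - \frac{1}{T}\int_{0}^{r}\xi(\bar{\Theta}_u(\omega,x))\,du,$$
so the $\limsup$ and $\liminf$ at $(\omega,x)$ agree with those at $\bar{\Theta}_r(\omega,x)$; hence $\bar{\Theta}_t^{-1}N = N$.

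Third, I would apply the computation carried out in the proof of Theorem \ref{ET}: for any set $A \in \bar{\mathcal{F}}$ satisfying $\bar{\Theta}_t^{-1}A = A$ for every $t \geq 0$, one has $\mu_s(A) = \mu_0(A)$ for all $s \in \mathbb{R}$ (this is the identity $\mu_s(A)=P(A_s)=P(\theta_t^{-1}A_{s})=P(A_{s-t})=\mu_{s-t}(A)$ established there). Applying this to $N$ gives $\mu_s(N) = \mu_0(N)$ for every $s$, and then
$$0 = \bar{\mu}(N) = \frac{1}{\tau}\int_{0}^{\tau}\mu_s(N)\, ds = \mu_0(N),$$
so $\mu_s(N) = 0$ for all $s \in [0, \tau)$. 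Consequently $\bar{\mathbb{E}}[\chi_N] = \sup_{s\in[0,\tau)}\mu_s(N) = 0$, which is exactly the $\bar{\mathbb{E}}$-q.s. conclusion.

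The only delicate point is the invariance calculation in step two (standard) together with correctly identifying the exceptional set, which one should take to be $\{f_{\limsup} \neq c\} \cup \{f_{\liminf} \neq c\}$ with $c = \frac{1}{\tau}\int_0^\tau \mathbb{E}_{\mu_s}[\xi]\,ds$, so that $N$ is manifestly $\bar{\Theta}_t$-invariant. I do not anticipate any serious obstacle, since the real work — ergodicity of the skew product and the redistribution identity $\mu_s(A)=\mu_0(A)$ for $\bar{\Theta}_t$-invariant $A$ — has already been done in Theorems \ref{ET} and \ref{Converse ET}.
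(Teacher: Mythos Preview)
Your proof is correct and takes a genuinely different route from the paper's.

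The paper works with the \emph{discrete} skew-product systems $(\bar{\Omega},\bar{\mathcal F},\mu_s,(\bar\Theta_\tau^n)_{n\ge 1})$ for each $s$ separately: it forms the time-averaged observable $\xi_\tau:=\frac1\tau\int_0^\tau U_t\xi\,dt$, applies the discrete Birkhoff theorem to obtain $\frac{1}{n}\sum_{k=0}^{n-1}U_\tau^k\xi_\tau\to\mathbb E_{\mu_s}[\xi_\tau]$ $\mu_s$-a.s., passes to continuous $T$ by a sandwich with $n_T=[T/\tau]$, computes $\mathbb E_{\mu_s}[\xi_\tau]=\frac1\tau\int_0^\tau\mathbb E_{\mu_t}[\xi]\,dt$, and finally intersects the $\mu_s$-null exceptional sets $A_s$ over $s\in[0,\tau)$ to obtain a set $A$ with $V(A)=0$.

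You instead invoke the continuous-time Birkhoff theorem once, for the single invariant measure $\bar\mu$, and then exploit the $\bar\Theta_t$-invariance of the exceptional set $N$ together with the identity $\mu_s(N)=\mu_0(N)$ (from the proof of Theorem~\ref{ET}) to conclude $\mu_s(N)=0$ for every $s$ directly. What your approach buys is that it avoids both the sandwich argument and the uncountable intersection $\bigcap_{s\in[0,\tau)}A_s$ (whose measurability the paper leaves implicit); the price is that you must appeal to the continuous-time Birkhoff theorem, which needs the joint measurability of $(t,\bar\omega)\mapsto\bar\Theta_t\bar\omega$ --- guaranteed here since $\Phi$ is a random dynamical system. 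Both proofs ultimately rest on the ergodicity results of Theorems~\ref{ET} and~\ref{Converse ET}, but your reduction to a single Birkhoff application plus an invariance argument is arguably the more streamlined of the two.
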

	
	\begin{proof}
		By (i) of Remark \ref{re1}, we know that $(\bar{\Omega}, \bar{\mathcal{F}}, \mu_{s}, (\bar{\Theta}_{\tau}^{n})_{n\geq 0})$ is ergodic for each $s\in \mathbb{R}$. For any $\xi\in L_b(\bar{\Omega})$, without any loss of generality, we can assume that $\xi\geq 0, \mathbb{E}-q.s.$. Let $\xi_{\tau}:=\frac{1}{\tau}\int_{0}^{\tau}U_{t}\xi dt$, then
		$$\frac{1}{n}\sum_{k=0}^{n-1}U_{\tau}^{k}\xi_{\tau}=\frac{1}{n\tau}\int_{0}^{n\tau}U_{t}\xi dt.$$
		Applying Birkhoff's ergodic theorem (c.f. \cite{Da Prato}) for $\xi_{\tau}$ on $(\bar{\Omega}, \bar{\mathcal{F}}, \mu_{s}, (\bar{\Theta}_{\tau}^{n})_{n\geq 0})$, we have
		$$\lim_{n\rightarrow \infty}\frac{1}{n}\sum_{k=0}^{n-1}U_{\tau}^{k}\xi_{\tau}=E_{\mu_{s}}[\xi_{\tau}], \quad \mu_{s}-a.s..$$
		For any arbitrary $T>0$, let $n_{T}=[\frac{T}{\tau}]$ be the maximal nonnegative integer less than or equal to $\frac{T}{\tau}$. Then $n_{T}\tau\leq T<(n_{T}+1)\tau$ and
		$$\frac{n_{T}}{n_{T}+1}\cdot\frac{1}{n_{T}\tau}\int_{0}^{n_{T}\tau}U_{t}\xi dt\leq \frac{1}{T}\int_{0}^{T}U_{t}\xi dt\leq \frac{n_{T}+1}{n_{T}}\cdot\frac{1}{(n_{T}+1)\tau}\int_{0}^{(n_{T}+1)\tau}U_{t}\xi dt.$$
		Thus 
		$$\lim_{T\rightarrow \infty}\frac{1}{T}\int_{0}^{T}U_{t}\xi dt=E_{\mu_{s}}[\xi_{\tau}] \quad \mu_{s}-a.s..$$
		However,
		\begin{equation*}
		\begin{split}
		E_{\mu_{s}}[\xi_{\tau}]&=E_{\mu_{s}}[\frac{1}{\tau}\int_{0}^{\tau}U_{t}\xi dt]\\
		&=\frac{1}{\tau}\int_{0}^{\tau}E_{\mu_{s}}[U_{t}\xi] dt\\
		&=\frac{1}{\tau}\int_{0}^{\tau}\bar{\Theta}_{t}E_{\mu_{s}}[\xi] dt\\
		&=\frac{1}{\tau}\int_{0}^{\tau}E_{\mu_{s+t}}[\xi] dt\\
		&=\frac{1}{\tau}\int_{0}^{\tau}E_{\mu_{t}}[\xi] dt,\\
		\end{split}
		\end{equation*}
		thus 
		$$\lim_{T\rightarrow \infty}\frac{1}{T}\int_{0}^{T}U_{t}\xi dt=\frac{1}{\tau}\int_{0}^{\tau}E_{\mu_{t}}[\xi] dt \quad \mu_{s}-a.s..$$
		Let $A_{s}$ be the $\mu_{s}$-null set such that 
		$$\lim_{T\rightarrow \infty}\frac{1}{T}\int_{0}^{T}U_{t}\xi dt=\frac{1}{\tau}\int_{0}^{\tau}E_{\mu_{t}}[\xi] dt,  \text{ on }A_{s}^{c}.$$
		Let $A=\cap_{s \in [0, \tau)}A_{s}$, then $V(A)=\sup_{s\in [0,\tau)}\mu_{s}(A)=0$ and
		$$\lim_{T\rightarrow \infty}\frac{1}{T}\int_{0}^{T}U_{t}\xi dt=\frac{1}{\tau}\int_{0}^{\tau}E_{\mu_{t}}[\xi] dt,  \text{ on } A^c.$$
	\end{proof}
    
    From the Condition P and the assumption that the periodic measure $\{\mu_{s}\}_{s\in\mathbb{R}}$ on the product space is PS-ergodic, we obtained the Birkhoff's law of large numbers with the convergence in the sense of quasi-surely. This result is stronger than the Birkhoff ergodic type theorem in the almost sure sense that we can obtain from the ergodic theory of periodic measure (\cite{F.Z2}). This justifies the study of the construction of invariant sublinear expectations from periodic measures.
	
	Next we also give two examples of ergodic sublinear dynamical system.
	\begin{example}(An ergodic sublinear dynamical system with discrete time)
	\label{Example1}
	We consider $\Omega_{1}=[0,1), \theta_{\alpha}: \Omega_{1}\rightarrow \Omega_{1}, \theta_{\alpha}(x)=(x+\alpha) \text{ mod }1$. It is well known that the dynamical system $(\Omega_{1}, \mathcal{B}(\Omega_{1}), P_{1}, (\theta_{\alpha}^{n})_{n\geq 0})$ is ergodic when $\alpha$ is irrational. Here $P_{1}$ is the Lebesgue measure on $\Omega_{1}$.
	
	Next, we consider $\hat{\Omega}=[0,2)=[0,1)\cup [1,2)=\Omega_{1}\cup \Omega_{2}$, where $\Omega_{2}=[1,2)$. Define $\hat{\theta}_{\alpha}: \hat{\Omega}\rightarrow \hat{\Omega}$ by
	\begin{equation*}
	\hat{\theta}_{\alpha}(x)=
	\begin{cases}
	\theta_{\alpha}(x)+1, & \text{$x\in \Omega_{1}$},\\
	x-1,                         & \text{$x\in  \Omega_{2}$}.
	\end{cases}
	\end{equation*}
	Then we know that $\hat{\theta}_{\alpha}^2$ maps $\Omega_{i}$ into $\Omega_{i}$ for $i=1,2$. Let $P_{2}$ be the Lebesgue measure on $\Omega_{2}$, then $(\Omega_{i}, \mathcal{B}(\Omega_{i}), P_{i}, (\hat\theta_{\alpha}^{2n}|_{\Omega_{i}})_{n\geq 0}), i=1,2$ are ergodic dynamical systems.
	
	Define 
	$$\bar{P}_{i}(A):=P_{i}(A\cap \Omega_{i}), \text{ for any } A \in \mathcal{B}(\hat{\Omega}),$$
	and
	$$\hat{\mathbb{E}}[X]=E_{\bar{P}_{1}}[X]\vee E_{\bar{P}_{2}}[X], \text{ for all }  X \in L^{1}(\hat{\Omega}).$$
	Then for any $\xi \in L^1(\hat{\Omega})$, we have
	\begin{equation*}
	\begin{split}
	\hat{\theta}_{\alpha}E_{\bar{P}_1}[\xi]
	&=E_{\bar{P}_1}[\xi\circ \hat{\theta}_{\alpha}]\\
	&=\int_{\hat{\Omega}}\xi(\hat{\theta}_{\alpha}(x))\bar{P}_1(dx)\\
	&=\int_{\Omega_1}\xi(\hat{\theta}_{\alpha}(x))P_1(dx)\\
	&=\int_{\Omega_2}\xi(y)P_2(dy)\\
	&=\int_{\hat{\Omega}}\xi(y)\bar{P}_2(dy)\\
	&=E_{\bar{P}_2}[\xi].\\
	\end{split}
	\end{equation*}
	Similarly, 
	$$\hat{\theta}_{\alpha}E_{\bar{P}_2}[\xi]=E_{\bar{P}_1}[\xi].$$
	Thus $\hat{\theta}_{\alpha}\hat{\mathbb{E}}[\xi]= E_{\bar{P}_1}[\xi\circ\hat{\theta}_{\alpha}]\vee E_{\bar{P}_2}[\xi\circ\hat{\theta}_{\alpha}] =E_{\bar{P}_2}[\xi]\vee E_{\bar{P}_1}[\xi] = \hat{\mathbb{E}}[\xi]$, which means $\hat{\theta}_{\alpha}$ preserves the sublinear expectation $\hat{\mathbb{E}}$. Then $(\hat{\Omega}, \mathcal{B}(\hat{\Omega}), (\hat{\theta}_{\alpha}^n)_{n\geq 0}, \hat{\mathbb{E}})$ is a sublinear dynamical system.
	
	Let $\bar{P}=\frac{1}{2}(\bar{P}_1+\bar{P}_2)$. Then $\bar{P}$ is an invariant measure with respect to $\hat{\theta}_{\alpha}$ and ergodic as it is PS-ergodic by Theorem 2.20 in \cite{F.Z2}. Moreover, $\hat{\theta}_{\alpha}$ is also ergodic under the sublinear upper expectation setting that we observe in this section. Here we give a straightforward proof of this result in this special case.	
	\end{example}
	\begin{proposition}
		The sublinear dynamical system $\hat{S}=(\hat{\Omega}, \mathcal{B}(\hat{\Omega}), (\hat{\theta}_{\alpha}^n)_{n\geq 0}, \hat{\mathbb{E}})$ is ergodic while $\alpha$ is irrational.
	\end{proposition}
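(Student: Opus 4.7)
The plan is to reduce the ergodicity of the sublinear dynamical system on $\hat{\Omega}$ to the known ergodicity of the irrational rotation $\theta_{\alpha}$ on $\Omega_1=[0,1)$ by slicing an invariant set along the partition $\hat{\Omega}=\Omega_1\cup\Omega_2$. Concretely, I would fix $B\in\mathcal{B}(\hat{\Omega})$ with $\hat{\theta}_{\alpha}^{-1}B=B$ and set $B_i:=B\cap \Omega_i$ for $i=1,2$. By Definition \ref{D1}, the goal is to show $\hat{\mathbb{E}}[I_B]=0$ or $\hat{\mathbb{E}}[I_{B^c}]=0$, i.e.\ $\bar{P}_1(B)\vee \bar{P}_2(B)=0$ or $\bar{P}_1(B^c)\vee \bar{P}_2(B^c)=0$.

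Next I would unpack the invariance. Because $\hat{\theta}_{\alpha}$ swaps $\Omega_1$ and $\Omega_2$, applying $\hat{\theta}_{\alpha}^{-1}$ to $B$ and intersecting with $\Omega_1$ gives $B_1=\{x\in \Omega_1:\theta_{\alpha}(x)+1\in B_2\}$, while intersecting with $\Omega_2$ gives $B_2=\{x\in \Omega_2: x-1\in B_1\}=B_1+1$. Substituting the second identity into the first yields $B_1=\theta_{\alpha}^{-1}B_1$ in $\Omega_1$. Thus $B_1$ is a $\theta_{\alpha}$-invariant Borel subset of $\Omega_1$, and $B_2$ is just its translate by $1$.

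At this point I would invoke the well-known ergodicity of $(\Omega_1,\mathcal{B}(\Omega_1),P_1,(\theta_{\alpha}^n)_{n\geq 1})$ for irrational $\alpha$, already quoted in the construction of the example, to conclude $P_1(B_1)\in\{0,1\}$. Since $P_2$ on $\Omega_2$ is the pushforward of $P_1$ under the translation $x\mapsto x+1$, the translate relation $B_2=B_1+1$ gives $P_2(B_2)=P_1(B_1)$. In the case $P_1(B_1)=0$ we obtain $\bar{P}_1(B)=\bar{P}_2(B)=0$, hence $\hat{\mathbb{E}}[I_B]=0$; in the case $P_1(B_1)=1$ the same reasoning applied to $B^c$ (whose sections are the complements $\Omega_i\setminus B_i$, still invariant and translates of each other) yields $\hat{\mathbb{E}}[I_{B^c}]=0$.

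There is no serious obstacle; the only step that needs a little care is the translation of $\hat{\theta}_{\alpha}^{-1}B=B$ into the two relations $B_1=\theta_{\alpha}^{-1}B_1$ and $B_2=B_1+1$, which must be carried out by tracking how $\hat{\theta}_{\alpha}$ acts on each of the two pieces of the partition. Once that bookkeeping is done, the conclusion follows immediately from the ergodicity of the irrational rotation and the definition of $\hat{\mathbb{E}}$ as the maximum of $\mathbb{E}_{\bar{P}_1}$ and $\mathbb{E}_{\bar{P}_2}$.
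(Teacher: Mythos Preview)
Your proposal is correct and follows essentially the same route as the paper: slice an invariant set $B$ into $B_1=B\cap\Omega_1$ and $B_2=B\cap\Omega_2$, show $B_1$ is invariant under the irrational rotation $\theta_\alpha$ on $\Omega_1$, apply ergodicity of the rotation to get $P_1(B_1)\in\{0,1\}$, and then use that $B_2$ is just a translate of $B_1$ to conclude $\bar P_1(B)=\bar P_2(B)$. The only cosmetic difference is that the paper reaches the $\theta_\alpha$-invariance of $B_1$ by first squaring $\hat\theta_\alpha$ (noting $\hat\theta_\alpha^2|_{\Omega_1}=\theta_\alpha$), whereas you unpack the single step $\hat\theta_\alpha^{-1}B=B$ directly into the two relations $B_2=B_1+1$ and $B_1=\theta_\alpha^{-1}B_1$; both computations are equivalent and lead to the same conclusion.
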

	
	\begin{proof}
		For any $A \in \mathcal{B}(\hat{\Omega})$ with $\hat{\theta}^{-1}_{\alpha}A=A$, we need to show that 
		$$\hat{\mathbb{E}}[I_A]=\bar{P}_1(A)\vee\bar{P}_2(A)=0 \text{ or } \hat{\mathbb{E}}[I_{A^c}]=\bar{P}_1(A^c)\vee\bar{P}_2(A^c)=0.$$
		By $\hat{\theta}^{-1}_{\alpha}A=A$ we have $(\hat{\theta}^2_{\alpha})^{-1}A=A$, which means
		$$\{(\hat{\theta}^2_{\alpha})^{-1}(A\cap\Omega_1)\}\cup\{(\hat{\theta}^2_{\alpha})^{-1}(A\cap\Omega_2)\}=(A\cap \Omega_1)\cup(A\cap\Omega_2).$$
		Since $\hat{\theta}^2_{\alpha}$ maps $\Omega_{i}$ into $\Omega_{i},i=1,2,$ then $$(\hat{\theta}^2_{\alpha})^{-1}(A\cap\Omega_i)=A\cap\Omega_i.$$
		As we already know, $(\Omega_{i}, \mathcal{B}(\Omega_{i}), ((\hat{\theta}^2_{\alpha})^n)_{n\geq 0}, P_i)$ are ergodic while $\alpha$ is irrational, Then 
		$$P_i(A\cap\Omega_{i})=0 \text{ or } 1, i=1,2.$$
		Since $\hat{\theta}_{\alpha}(A\cap\Omega_{1})=A\cap\Omega_{2}$, then we have $$\bar{P}_2(A)=P_2(A\cap\Omega_{2})=P_2(\hat{\theta}_{\alpha}(A\cap\Omega_{1}))=P_1(A\cap\Omega_{1})=\bar{P}_1(A).$$
		Thus 
		$$\hat{\mathbb{E}}[I_A]=\bar{P}_1(A)\vee\bar{P}_2(A)=0
		\text{ or }
		\hat{\mathbb{E}}[I_A^c]=\bar{P}_1(A^c)\vee\bar{P}_2(A^c)=0.$$
		This means the sublinear dynamical system $\hat{S}=(\hat{\Omega}, \mathcal{B}(\hat{\Omega}), (\hat{\theta}_{\alpha}^n)_{n\geq 0}, \hat{\mathbb{E}})$ is ergodic.
	\end{proof}
	
	\begin{example}(Ergodic sublinear dynamical system on torus)
	We consider the same dynamical system $(\tilde{\Omega}, \tilde{\mathcal{F}}, (\tilde{\theta}^{\alpha}_t)_{t\in \mathbb{R}})$ on torus and periodic probability measure $\mu_s, s\in \mathbb{R},$ as in Example \ref{Example2}. Now we define the upper expectation $\tilde{\mathbb{E}}$ on $L_b(\tilde{\Omega})$ by 
	$$\tilde{\mathbb{E}}[\varphi]:= \sup_{s\in [0,1)}E_{\mu_{s}}[\varphi], \text{ for all } \varphi \in L_b(\tilde{\Omega}).$$
	\end{example}
	\begin{proposition}
		The sublinear dynamical system $(\tilde{\Omega}, L_b(\tilde{\Omega}), \tilde{\mathbb{E}}, (\tilde{\theta}^{\alpha}_t)_{t\geq 0})$ is ergodic.
	\end{proposition}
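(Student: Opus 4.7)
The plan is to mirror the scheme of Theorem~\ref{ENES} in this explicit torus setting, essentially piggybacking on the work already done in Proposition~\ref{prop1}. I would split the argument into two parts: first, check that $\tilde{\theta}^{\alpha}_t$ preserves the upper expectation $\tilde{\mathbb{E}}$; second, show that any set invariant under the continuous-time flow has $\tilde{\mathbb{E}}$-mass either $0$ or $1$.

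For the preservation, I would imitate Proposition~\ref{ISE}. Since $\mu_s = \delta_{\{s \text{ mod } 1\}}\times L$ is a $1$-periodic probability measure and $\tilde{\theta}^{\alpha}_t \mu_s = \mu_{t+s}$, a direct computation gives
\begin{equation*}
\tilde{\theta}^{\alpha}_t \mathbb{E}_{\mu_s}[\varphi] \;=\; \mathbb{E}_{\mu_s}[\varphi\circ \tilde{\theta}^{\alpha}_t] \;=\; \mathbb{E}_{\mu_{s+t}}[\varphi].
\end{equation*}
Taking the supremum over $s\in[0,1)$ and using $1$-periodicity of $s\mapsto \mu_s$ then yields $\tilde{\theta}^{\alpha}_t \tilde{\mathbb{E}}[\varphi]=\tilde{\mathbb{E}}[\varphi]$ for every $\varphi \in L_b^0(\tilde{\Omega})$ and every $t\in\mathbb{R}$.

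For the ergodicity part, fix $\tilde{A}\in \tilde{\mathcal{F}}$ with $(\tilde{\theta}^{\alpha}_t)^{-1}\tilde{A}=\tilde{A}$ for all $t\in\mathbb{R}$. In particular this set is invariant under the time-$1$ map $\tilde{\theta}^{\alpha}_1$, so by Proposition~\ref{prop1} we already know $\mu_s(\tilde{A})\in\{0,1\}$ for every $s\in\mathbb{R}$. Moreover, equation~(\ref{eq4}) in the proof of Proposition~\ref{prop1}, which only used the continuous-time invariance, says exactly $\mu_s(\tilde{A})=\mu_0(\tilde{A})$ for all $s\in[0,1)$. Consequently
\begin{equation*}
\tilde{\mathbb{E}}[I_{\tilde{A}}] \;=\; \sup_{s\in[0,1)} \mu_s(\tilde{A}) \;=\; \mu_0(\tilde{A}) \;\in\;\{0,1\}.
\end{equation*}
If $\mu_0(\tilde{A})=0$ then $\tilde{\mathbb{E}}[I_{\tilde{A}}]=0$; if $\mu_0(\tilde{A})=1$ then the same argument applied to $\tilde{A}^c$ (which is also $\tilde{\theta}^{\alpha}_t$-invariant) gives $\tilde{\mathbb{E}}[I_{\tilde{A}^c}]=0$. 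By Definition~\ref{D1} this establishes ergodicity of the sublinear system.

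There is no real obstacle in this argument: once one notices that the invariance under the full continuous flow already forces $s\mapsto \mu_s(\tilde{A})$ to be constant (which was verified inside Proposition~\ref{prop1}), the supremum defining $\tilde{\mathbb{E}}$ collapses to a single value and the 0/1 dichotomy for $\mu_0$ transfers verbatim to $\tilde{\mathbb{E}}$. The only mild care is to cite the right ingredient from Proposition~\ref{prop1}, namely equation~(\ref{eq4}) rather than the $\tilde{\theta}^{\alpha}_1$-only statement, so that the conclusion $\mu_s(\tilde{A})=\mu_0(\tilde{A})$ is available for all $s$ simultaneously.
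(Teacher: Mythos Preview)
Your proposal is correct and follows essentially the same route as the paper: the paper's proof also points back to Theorem~\ref{ENES}, noting that the key identity $\mu_s(\tilde{A})=\mu_0(\tilde{A})$ for $\tilde{\theta}^{\alpha}_t$-invariant sets comes from Proposition~\ref{prop1} and equation~(\ref{eq4}), exactly as you argued. Your explicit verification of the preservation property (mirroring Proposition~\ref{ISE}) is a sensible addition that the paper leaves implicit.
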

	\begin{proof}
	  The proof is similar to that of Theorem \ref{ENES}. But in Theorem \ref{ENES}, we assumed that there is a random periodic path. But this proof does not depend on this assumption. The key is $\mu_{s}(\tilde{A})=\mu_{0}(\tilde{A})$ for any invariant set $\tilde{A}$. This is proved by a different method, see Proposition \ref{prop1} and (\ref{eq4}).
	\end{proof}

\subsection{Ergodicity of sublinear canonical dynamical systems with respect to a Markovian semigroup}
Next we will give the definitions of sublinear Markovian systems and their ergodicity (\cite{F.Z1}, \cite{Peng1}).
\begin{definition}
	(\cite{Peng1}) We say that $T: \mathbb{R}^+\times L_{b}(\mathbb{X})\rightarrow L_{b}(\mathbb{X})$ is a sublinear Markovian semigroup if:
	\begin{enumerate}[(i)]
		\item For each fixed $(t,x)\in \mathbb{R}^+\times\mathbb{X}$, $T_{t}[\varphi](x)$ is a sublinear expectation defined on $L_{b}(\mathbb{X})$.
		\item $T_0[\varphi](x)=\varphi(x)$, for each $\varphi\in L_{b}(\mathbb{X})$.
		\item $T_{t}[\varphi]$ satisfies the following Chapman semigroup formula:
		$$T_{t}\circ T_{s}[\varphi]=T_{t+s}[\varphi], \text{ for any }t,s\geq 0.$$
	\end{enumerate}
	Here $L_{b}(\mathbb{X})$ is the set of $\cal B(\mathbb X)$-measurable real-valued function defined on $\mathbb X$ such that $\sup_{x\in \mathbb X}|\phi(x)|<\infty$.
\end{definition}

\begin{remark}
	The Markovian semigroup $P_t, t\geq 0$ given in (\ref{Markov semigroup}) is also a sublinear Markovian semigroup.
\end{remark}

\begin{definition}
	(\cite{F.Z1}) A nonlinear expectation $\tilde{T}: L_{b}(\mathbb{X})\rightarrow \mathbb{R}$ is said to be an invariant nonlinear expectation under a sublinear Markovian semigroup $T_{t}, t\geq 0$ if it  satisfies
	$$T_{t}\tilde{T}=\tilde{T} \text{ for all } t\geq 0.$$
	Here $T_{t}\tilde{T}[\varphi]:=\tilde{T}[T_{t}\varphi].$
\end{definition}

Similarly as in the invariant measure case, denote by $\Omega^*=\mathbb{X}^\mathbb{R}$ the space of all $\mathbb{X}$-valued functions on $\mathbb{R}$, by $\mathcal{F}^*$ the smallest $\sigma$-algebra containing all cylindrical sets of $\Omega^*$, the shift $\theta^*: \mathbb{R}\times \Omega^*\rightarrow \Omega^*$ is defined by $(\theta_{t}^*\omega^*)(s)=\omega^*(t+s)$, for all $\omega^*\in \Omega^*.$
Set 
\begin{equation*}
	\begin{split}
	&L_0(\Omega^*)\\
	&:=\{\xi| \text{there exists } n \geq 1, t_1,\cdots,t_n\in \mathbb{R}, \varphi\in L_b(\mathbb{X}^n) \text{ such that } \xi(\omega^*)=\varphi(\omega^*(t_1),\cdots,\omega^*(t_n))\}.
	\end{split}
\end{equation*}
It is clear that $L_0(\Omega^*)$ is a linear subspace of $L_b(\Omega^*)$.  

For any $\varphi\in L_b(\mathbb{X}^n)$ and $t_1<t_2<\cdots<t_n$, we define $\varphi_i\in L_b(\mathbb{X}^{n-i}), i=1,2,\cdots,n$ as follows:
\begin{equation*}
\begin{split}
\varphi_1(x_1,x_2,\cdots,x_{n-1})&:=T_{t_n-t_{n-1}}[\varphi(x_1,x_2,\cdots,x_{n-1},\cdot)](x_{n-1})\\
\varphi_2(x_1,x_2,\cdots,x_{n-2})&:=T_{t_{n-1}-t_{n-2}}[\varphi_1(x_1,x_2,\cdots,x_{n-2},\cdot)](x_{n-2})\\
&\vdots\\
\varphi_{n-1}(x_1)&:=T_{t_{2}-t_{1}}[\varphi_{n-2}(x_1,\cdot)](x_{1}).\\
\end{split}
\end{equation*}
For any $T_{t}, t\geq 0$ invariant sublinear expectation $\tilde{T}$, we define the sublinear expectation $\mathbb{E}^{\tilde{T}}$ on $L_0(\Omega^*)$ by
$$\mathbb{E}^{\tilde{T}}[\xi]=\tilde{T}[\varphi_{n-1}(\cdot)], \text{ where } \xi(\omega^*)=\varphi(\omega^*(t_1),\omega^*(t_2),\cdots,\omega^*(t_n)).$$
Then we can extend $L_0(\Omega^*)$ into $L_0^p(\Omega^*)$ under the norm $(\mathbb{E}^{\tilde{T}}[|\cdot|^p])^{1/p}, p\geq 1$. Define the space
\begin{equation*}
\begin{split}
&Lip_{b,cyl}(\Omega^*)\\
&:=\{\xi| \text{there exists } n \geq 1, t_1,\cdots,t_n\in \mathbb{R}, \varphi\in C_{b,lip}(\mathbb{X}^n) \text{ such that } \xi(\omega^*)=\varphi(\omega^*(t_1),\cdots,\omega^*(t_n))\}.
\end{split}
\end{equation*}
and $L_G^p(\Omega^*)$ the completion of $Lip_{b,cyl}(\Omega^*)$ under the norm $(\mathbb{E}^{\tilde{T}}[|\cdot|^p])^{1/p}, p\geq 1$. From \cite{F.Z1}, $\theta^*_t$ preserves the expectation $\mathbb{E}^{\tilde{T}}$, and $S^{\tilde{T}}=(\Omega^*, L_G^2(\Omega^*), (\theta^*_t)_{t\in \mathbb{R}}, \mathbb{E}^{\tilde{T}})$ defines a dynamical system, called \textit{canonical dynamical system} associated with $T_t, t\geq 0, \tilde{T},$ and $\theta^*_t$.

\begin{definition}
	(\cite{F.Z1}) The invariant expectation $\tilde{T}$ is said to be ergodic with respect to the sublinear Markovian semigroup $T_t, t\geq 0$, if its associated canonical dynamical system $S^{\tilde{T}}=(\Omega^*, L_G^2(\Omega^*), (\theta^*_t)_{t\geq 0}, \mathbb{E}^{\tilde{T}})$ on the sublinear expectation space is ergodic.
\end{definition}
Next we will consider the ergodic property of a sublinear expectation $\tilde{T}$ on $(\mathbb{X}, L_b(\mathbb{X}))$ defined by
\begin{equation}
\label{eq2}
\tilde{T}[\varphi]=\sup_{s\in [0,\tau)}E_{\rho_{s}}[\varphi], \quad \text{ for all } \varphi\in L_b(\mathbb{X}),
\end{equation}
where $E_{\rho_{s}}[\varphi]:=\int_{\mathbb{X}}\varphi(x)\rho_{s}(dx).$

First we have the following property of $\tilde{T}$.
\begin{proposition}
	The sublinear expectation $\tilde{T}$ is an invariant expectation under the Markovian semigroup $P_t, t\geq 0.$
\end{proposition}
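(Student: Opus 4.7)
The plan is to unpack the definition $P_t\tilde T[\varphi]:=\tilde T[P_t\varphi]$ and reduce the claim $P_t\tilde T=\tilde T$ to the periodicity property $P_t^*\rho_s=\rho_{s+t}$ and $\rho_{s+\tau}=\rho_s$ from (\ref{rho}). First I would show, for an arbitrary fixed $s\in\mathbb{R}$ and $\varphi\in L_b(\mathbb{X})$, the identity
\begin{equation*}
\mathbb{E}_{\rho_s}[P_t\varphi]
=\int_{\mathbb{X}}\!\!\int_{\mathbb{X}}\varphi(y)\,P(t,x,dy)\,\rho_s(dx)
=\int_{\mathbb{X}}\varphi(y)\,(P_t^*\rho_s)(dy)
=\mathbb{E}_{\rho_{s+t}}[\varphi],
\end{equation*}
using Fubini (justified by boundedness of $\varphi$) and the definition of $P_t^*$, followed by the periodic-measure relation $P_t^*\rho_s=\rho_{s+t}$.

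Next I would take the supremum over $s\in[0,\tau)$ on both sides. By definition,
\begin{equation*}
\tilde T[P_t\varphi]=\sup_{s\in[0,\tau)}\mathbb{E}_{\rho_s}[P_t\varphi]=\sup_{s\in[0,\tau)}\mathbb{E}_{\rho_{s+t}}[\varphi].
\end{equation*}
The key remaining point is that the map $s\mapsto (s+t)\bmod\tau$ is a bijection of $[0,\tau)$, and by the periodicity $\rho_{s+\tau}=\rho_s$ the value $\rho_{s+t}$ depends only on $(s+t)\bmod\tau$; hence the family $\{\rho_{s+t}:s\in[0,\tau)\}$ coincides exactly with $\{\rho_s:s\in[0,\tau)\}$. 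Therefore the supremum over the shifted parameter equals the supremum over $[0,\tau)$, giving $\tilde T[P_t\varphi]=\sup_{s\in[0,\tau)}\mathbb{E}_{\rho_s}[\varphi]=\tilde T[\varphi]$.

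There is essentially no hard step here; the only subtlety is ensuring that rewriting $\sup_{s\in[0,\tau)}\mathbb{E}_{\rho_{s+t}}[\varphi]$ as a sup over $[0,\tau)$ is done carefully, splitting $s\in[0,\tau)$ at the value $\tau-(t\bmod\tau)$ and using $\rho_{s+\tau}=\rho_s$ on the second piece, so that the reindexed family exhausts $\{\rho_u:u\in[0,\tau)\}$ without loss. This is the only place one needs to be mildly careful; the rest is a direct computation from the Chapman--Kolmogorov definition of $P_t$ and the invariance/periodicity of $\rho_\cdot$.
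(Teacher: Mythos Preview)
Your proposal is correct and follows essentially the same approach as the paper: compute $\mathbb{E}_{\rho_s}[P_t\varphi]=\mathbb{E}_{\rho_{s+t}}[\varphi]$ from the periodic-measure relation $P_t^*\rho_s=\rho_{s+t}$, then take the supremum over $s\in[0,\tau)$ and use the $\tau$-periodicity of $\rho_\cdot$ to reindex. The paper compresses the reindexing step into a single phrase ``by the periodicity of $\rho_\cdot$'', whereas you spell out the bijection $s\mapsto (s+t)\bmod\tau$ explicitly, but the argument is the same.
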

\begin{proof}
	We need to prove that $P_t\tilde{T}[\varphi]=\tilde{T}[\varphi], \text{ for all } t\geq 0, \varphi\in L_b(\mathbb{X}).$ Since $\rho.$ is a periodic measure of the Markovian semigroup $P_t$, we have
	\begin{equation}
	\label{PE}
	\begin{split}
	P_tE_{\rho_s}[\varphi]&=E_{\rho_s}[P_t\varphi]\\
	&=\int_{\mathbb{X}}P_t\varphi(x)\rho_s(dx)\\
	&=\int_{\mathbb{X}}\int_{\mathbb{X}}P_t(x,dy)\varphi(y)\rho_s(dx)\\
	&=\int_{\mathbb{X}}\int_{\mathbb{X}}P_t(x,dy)\rho_s(dx)\varphi(y)\\
	&=\int_{\mathbb{X}}\varphi(y)\rho_{s+t}(dy)\\
	&=E_{\rho_{s+t}}[\varphi].\\
	\end{split}
	\end{equation}
	Therefore by (\ref{eq2}), (\ref{PE}) and the periodicity of $\rho.$, we have that
	\begin{equation*}
	\begin{split}
	P_t\tilde{T}[\varphi]=\tilde{T}[P_t\varphi]
	=\sup_{s\in [0,\tau)}E_{\rho_s}[P_t\varphi]
	=\sup_{s\in [0,\tau)}E_{\rho_{s+t}}[\varphi]
	=\sup_{s\in [0,\tau)}E_{\rho_s}[\varphi]
	=\tilde{T}[\varphi].
	\end{split}
	\end{equation*}
\end{proof}

\begin{theorem}
	Under the assumption in Theorem \ref{NewET}, if $(\Omega, \mathcal{F}, P, (\theta_{\tau}^n)_{n\geq 0})$ is ergodic, then $\tilde{T}$ is ergodic with respect to the Markovian semigroup $P_t, t\geq 0$.
\end{theorem}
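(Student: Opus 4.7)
The plan is to deduce ergodicity of the sublinear expectation $\tilde{T}$ from the (already-established) classical ergodicity of its time-averaged invariant measure $\bar{\rho}$. Under the assumptions of Theorem \ref{NewET}, that theorem already supplies PS-ergodicity of $\{\rho_{s}\}_{s\in\mathbb{R}}$ together with ergodicity of $\bar{\rho}=\frac{1}{\tau}\int_{0}^{\tau}\rho_{s}\,ds$ with respect to $\{P_{t}\}_{t\geq 0}$. Hence the classical canonical dynamical system $S^{\bar{\rho}}=(\Omega^{*},\mathcal{F}^{*},\mathbb{P}^{\bar{\rho}},(\theta^{*}_{t}))$ is ergodic, and $\mathbb{P}^{\bar{\rho}}(B)\in\{0,1\}$ for every $\theta^{*}_{t}$-invariant $B\in\mathcal{F}^{*}$.

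For each $s\in[0,\tau)$, let $\mathbb{P}^{\rho_{s}}$ be the Kolmogorov-extension measure on $(\Omega^{*},\mathcal{F}^{*})$ with initial distribution $\rho_{s}$ at time $0$ and transition kernels $P_{t}$. On cylinder functions the definition of $\mathbb{E}^{\tilde{T}}$ reduces to $\sup_{s\in[0,\tau)}\mathbb{E}_{\mathbb{P}^{\rho_{s}}}[\cdot]$, so the natural associated upper probability is $V^{*}(B):=\sup_{s\in[0,\tau)}\mathbb{P}^{\rho_{s}}(B)$, and ergodicity of $\tilde{T}$ amounts to the statement: for every $B\in\mathcal{F}^{*}$ with $(\theta^{*}_{t})^{-1}B=B$ for all $t\in\mathbb{R}$, either $V^{*}(B)=0$ or $V^{*}(B^{c})=0$. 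The proof I will carry out is the direct sublinear analogue of the proof of Theorem \ref{ENES}.

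The crux relies on two compatibilities. First, linearity of the Kolmogorov construction in the initial distribution together with Fubini gives $\mathbb{P}^{\bar{\rho}}=\frac{1}{\tau}\int_{0}^{\tau}\mathbb{P}^{\rho_{s}}\,ds$. Second, the periodicity $P_{t}^{*}\rho_{s}=\rho_{s+t}$ together with the Markov property (checked on finite-dimensional cylinders and extended by Kolmogorov uniqueness) yields the push-forward identity $(\theta^{*}_{t})_{*}\mathbb{P}^{\rho_{s}}=\mathbb{P}^{\rho_{s+t}}$. Combined with invariance of $B$, this forces $\mathbb{P}^{\rho_{s}}(B)=\mathbb{P}^{\rho_{s}}((\theta^{*}_{t})^{-1}B)=\mathbb{P}^{\rho_{s+t}}(B)$ for all $s,t$, so the map $s\mapsto\mathbb{P}^{\rho_{s}}(B)$ is constant with some common value $p$. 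Averaging and using the first compatibility yields $\mathbb{P}^{\bar{\rho}}(B)=p$, and the ergodicity of $\bar{\rho}$ from Theorem \ref{NewET} forces $p\in\{0,1\}$. If $p=0$ then $V^{*}(B)=0$, and if $p=1$ then $V^{*}(B^{c})=\sup_{s}(1-\mathbb{P}^{\rho_{s}}(B))=0$, which is exactly the ergodicity of $\tilde{T}$.

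The main obstacle I anticipate is the measure-theoretic bookkeeping: identifying $\mathbb{E}^{\tilde{T}}$ with the upper expectation $\sup_{s}\mathbb{E}_{\mathbb{P}^{\rho_{s}}}$ beyond cylindrical $\xi$ (i.e., on the $L_{G}^{2}$-completion appearing in the definition of $S^{\tilde{T}}$ and on the upper-probability extension to $\mathcal{F}^{*}$), and verifying the push-forward identity $(\theta^{*}_{t})_{*}\mathbb{P}^{\rho_{s}}=\mathbb{P}^{\rho_{s+t}}$ on all of $\mathcal{F}^{*}$ rather than only on cylinder sets. Once these technicalities are handled, the conclusion follows immediately from the constancy of $s\mapsto\mathbb{P}^{\rho_{s}}(B)$ on any invariant $B$ and the classical ergodicity of $\bar{\rho}$ supplied by Theorem \ref{NewET}.
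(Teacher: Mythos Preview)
Your argument is correct in outline but takes a genuinely different route from the paper. You work directly on the path space $(\Omega^{*},\mathcal{F}^{*})$: construct the family $\{\mathbb{P}^{\rho_{s}}\}_{s}$, show $s\mapsto\mathbb{P}^{\rho_{s}}(B)$ is constant on any $\theta^{*}$-invariant $B$ via the push-forward identity $(\theta^{*}_{t})_{*}\mathbb{P}^{\rho_{s}}=\mathbb{P}^{\rho_{s+t}}$, and then read off $\mathbb{P}^{\rho_{s}}(B)\in\{0,1\}$ from the classical ergodicity of $\bar{\rho}$ supplied by Theorem~\ref{NewET}. The paper instead never touches invariant sets on path space: it invokes a state-space criterion (Theorem~3.25 of \cite{F.Z1}) which reduces ergodicity of $\tilde{T}$ to showing that every $\varphi\in L_{b}(\mathbb{X})$ with $P_{t}\varphi=\varphi$ (and $\varphi^{2}$ without mean uncertainty) is constant $\tilde{T}$-q.s. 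PS-ergodicity of $\{\rho_{s}\}$ gives $\varphi=l_{s}$ $\rho_{s}$-a.s., and the identity $\mathbb{E}_{\rho_{s+t}}[\varphi]=\mathbb{E}_{\rho_{s}}[P_{t}\varphi]=\mathbb{E}_{\rho_{s}}[\varphi]$ forces $l_{s}\equiv l_{0}$, whence $\tilde{T}[I_{\{\varphi\neq l_{0}\}}]=\sup_{s}\rho_{s}(\{\varphi\neq l_{0}\})=0$.

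The trade-off: your approach is more self-contained (no black-box criterion from \cite{F.Z1}) and makes the role of $\bar{\rho}$ transparent, but it incurs exactly the bookkeeping you flag---identifying $\mathbb{E}^{\tilde{T}}$ with $\sup_{s}\mathbb{E}_{\mathbb{P}^{\rho_{s}}}$ on the $L_{G}^{2}$-completion and ensuring $I_{B}$ lives in the domain where Definition~\ref{D1} applies. These are genuine, if routine, technical steps (linearity of $P_{t}$ means the representing family is $\{\mathbb{P}^{\rho_{s}}\}$, and the extension is standard upper-expectation theory), but they are not yet written down. The paper's route bypasses all of this by working entirely on $\mathbb{X}$ with $P_{t}$-invariant functions, at the cost of importing the equivalence from \cite{F.Z1}. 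If you want your proof to stand alone, you should either carry out the extension argument you outline, or observe that the constancy of $s\mapsto\mathbb{P}^{\rho_{s}}(B)$ plus PS-ergodicity already lets you bypass $\bar{\rho}$: since $\rho_{s}$ is ergodic for the discrete semigroup $\{P_{k\tau}\}$, the restriction of the canonical system to discrete times is ergodic under each $\mathbb{P}^{\rho_{s}}$, and an invariant $B$ for continuous time is in particular invariant for the $\tau$-shift.
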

\begin{proof}
	We will show that for any $\varphi\in L_b(\mathbb{X})$, if $P_t\varphi=\varphi$ (since $P_t$ is a linear operator, then $P_t(-\varphi)=-\varphi$) for all $t\geq 0$, then $\varphi$ is constant, $\tilde{T}$-q.s. By Theorem 3.25 in \cite{F.Z1}, noting in this part of the theorem in \cite{F.Z1}  that $\varphi(\hat X(0))$ has no mean uncertainty is not needed, we conclude that $\tilde{T}$ is ergodic. 

    Firstly by Theorem \ref{NewET}, we know that $\{\rho_{s}\}_{s\in\mathbb{R}}$ is PS-ergodic. Then by classical ergodic theory (c.f. \cite{Da Prato1}) we have that if $P_{\tau}\varphi=\varphi$, then $\varphi$ is constant, $\rho_{s}$-a.s. for all $s\in\mathbb{R}$. Let $l_s$ be that constant, i.e. $\varphi=l_s, \rho_{s}$-a.s..
	Since $P_t\varphi=\varphi$ for all $t\geq 0$ and by  (\ref{PE}), we have
	\begin{equation*}
	\begin{split}
	l_{s+t}=E_{\rho_{s+t}}[\varphi]
	=E_{\rho_{s}}[P_t\varphi]
	=E_{\rho_{s}}[\varphi]
	=l_s.
	\end{split}
	\end{equation*}
	So $l_s\equiv l_0$ and $\varphi=l_0, \rho_{s}-a.s.$ for all $s\in \mathbb{R}$. Let $A:=\{x: \varphi(x)\neq l_0\}$, then $\rho_{s}(A)=0, \text{ for all } s\in \mathbb{R}$. Thus $\tilde{T}[I_A]=\sup_{s\in [0,\tau)}\rho_{s}(A)=0$, which means $\varphi=l_0, \tilde{T}-q.s.$.
\end{proof}

    \section*{Acknowledgements}
    We are grateful to the anonymous referee for their constructive comments which lead to significant improvements of this paper. We would like to thank Hans Crauel for useful discussions and acknowledge the financial supports of a Royal Society Newton Fund grant NA150344 and an EPSRC grant EP/S005293/1. 


\begin{thebibliography}{99}

   \addtolength{\itemsep}{-1.5 em} 
\setlength{\itemsep}{-3pt}
\footnotesize

	\bibitem{Arnold} L. Arnold, {\it Random Dynamical Systems}, Springer-Verlag Berlin Heidelberg New York (1998).
	
	\bibitem{Artzner} P. Artzner, F. Delbaen, J-M. Eber and D. Haeth, Coherent measures of risk, \emph{Math. Finac.}, Vol. 9 (1999), 203-228.
	
	\bibitem{blw} P. W. Bates, K.N. Lu and B.X. Wang,  Attractors of non-autonomous stochastic lattice systems in weighted spaces,
{\it Physica D}, Vol. 289 (2014), 32-50.

	
	\bibitem{Birkhoff} G. D. Birkhoff, Proof of the ergodic theory, \emph{Proc. Natl. Acad. Sci. USA}, Vol. 17 (1931), 656-660.
	
	\bibitem{chekroun} M. Chekroun, E. Simonnet and M. Ghil, Stochastic climate dynamics: random
attractors and time-dependent invariant measures, {\it Physica D}, 240 (2011), 1685-1700.
	
	\bibitem{CLRS} A. M. Cherubini, J. S W Lamb, M. Rasmussen and Y. Sato, A random dynamical systems perspective
on stochastic resonance, {\it Nonlinearity}, Vol 30 (2017), 2835-2853.
    
    \bibitem{Da Prato1} G. Da Prato and J. Zabczyk, {\it Ergodicity for Infinite Dimensional Systems}, London Mathematical Society Lecture Note Series, 229, Cambridge University Press, 1996.
    
	\bibitem{Da Prato} G. Da Prato and J. Zabczyk, {\it Stochastic Equations in Infinite Dimensions}, Encyclopedia of Mathematics and its Applications, Cambridge University Press, 1992.
    
    \bibitem{Delbaen} F. Delbaen, Coherent measures of risk on general probability space, In: K. Sandmann and P. J. Schonbucher (eds), \emph{Advances in Finance and Stochastics, Essays in honour of Dieter Sondermann}, Springer-Verlag Berlin, (2002), PP 1-37.
    
	\bibitem{Doob} J. L. Doob, Asymptotic properties of Markoff transition probabilities, \emph{Trans. Amer. Math. Soc.}, Vol. 63 (1948), 394-421.

	\bibitem{Durrett} R. Durrett, {\it Probability: Theory and Examples - 3rd edition}, Duxbury Press (2004).

    \bibitem{Elworthy} K.D. Elworthy, Stochastic dynamical systems and their flows. In: \emph{Stochastic Analysis}, ed. A. Friedman, M. Pinsky, London-New York: \emph{Academic Press} (1978), 79-95.
    
   \bibitem{FLZ} C. R. Feng, Y. Liu and H. Z. Zhao, Numerical approximation of random periodic solutions
of stochastic differential equations,{ \it Z. Angew. Math. Phys.}, Vol 68:119(2017), 1-32.

\bibitem{FWZ} C. R. Feng, Y. Wu and H. Z. Zhao, Anticipating random periodic solutions
I. SDEs with multiplicative linear Nnoise, {\it J. Funct. Anal.}, Vol 271 (2016), 365-417.

    
	\bibitem{FZZ} C.R. Feng, H.Z. Zhao and B. Zhou, Pathwise random periodic solutions of stochastic differential equations, \emph{J. Differential Equations}, Vol. 251 (2011), 119-149.

    \bibitem{F.Z3} C.R. Feng and H.Z. Zhao, Random periodic solutions of SPDEs via integral equations and Wiener-Sobolev compact embedding \emph{J. Funct. Anal.}, Vol. 262 (2012), 4377-4422.

	\bibitem{F.Z1} C.R. Feng and H.Z. Zhao, Ergodicity on nonlinear expectation spaces, 2020, \emph{axXiv: 1705.03549v3}.

	\bibitem{F.Z2} C.R. Feng and H.Z. Zhao, Random periodic processes, periodic measures and ergodicity, 2019, \emph{J. Differ. Equ.}, DOI: 10.1016/j.jde.2020.05.034.
	
	\bibitem{FZZ19} C. Feng, H. Zhao and J. Zhong, Existence of geometric ergodic periodic measures of stochastic differential equations, 2019,  {\it arXiv:1904.08091}.

	
	\bibitem{Flandoli} F. Flandoli, Stochastic flows for nonlinear second order parabolic SPDEs, \emph{Ann. Probab.}, Vol. 24 (1996), 547-558.
	
	\bibitem{Garrido} M. J. Garrido-Atienza, K.N. Lu and B. Schmalfuss, Unstable invariant manifolds for stochastic PDEs driven by a fractional Brownian motion, \emph{J. Differential Equations}, Vol. 248 (2010), 1637-1667.
	
	\bibitem{Hasminskii} R. Z. Has'minskii, {\it Stochastic Stability of Differential Equations - 2rd edition}, {Springer} (2012).
	
	\bibitem{hl}W. Huang, Z. Lian and K.N. Lu, Ergodic theory for random Anosov systems mixing on fibers, 2019, {\it  arXiv:1612.08394v2}.
	
	\bibitem{Kifer} Y. Kifer, {\it Random Perturbations of Dynamical Systems}, {Progr. Probab. Statist.}, Vol. 16, Birkh$\ddot{\rm a}$user, Boston (1988).
	
	\bibitem{Kunita} H. Kunita, {\it Stochastic Flows and Stochastic Differential Equations}, {Cambridge University Press} (1990).
	
	\bibitem{Meyer} P-A. Meyer, Flot dune {$\acute{\rm e}$}quation diff$\acute{\rm e}$rentielle stochastique, \emph{S$\acute{e}$minaire de probabilit$\acute{e}$s (Strasbourg)}, Vol. 15 (1981), P. 103-117.
	
	\bibitem{Mohammed} S-E.A. Mohammed, T.S. Zhang, H.Z. Zhao, The stable manifold theorem for semi-linear stochastic evolution equations and stochastic partial differential equations, \emph{Mem. Amer. Math. Soc.}, Vol. 196 (2008). No. 917, PP. 1-105.
	
	\bibitem{Peng1} S.G. Peng, Nonlinear expectations and nonlinear Markov chains, \emph{Chin. Annal Math.}, 26, 159-184 (2005).
	
	\bibitem{Peng} S.G. Peng, {\it Nonlinear Expectations and Stochastic Calculus under Uncertainty - with Robust Central Limit Theorem and G-Brownian Motion}, Springer, Berlin, Heidelberg (2019).
	
	\bibitem{Neumann1} J. von Neumann, Proof of the quasi-ergodic hypothesis, \emph{Proc. Natl. Acad. Sci. USA}, Vol. 18 (1932), 70-82.

    \bibitem{Neumann2} J. von Neumann, Physical Applications of the Ergodic Hypothesis, \emph{Proc. Natl. Acad. Sci. USA}, Vol. 18 (1932), 263-266.

	\bibitem{Walters} P. Walters, {\it An Introduction to Ergodic Theory}, Graduate Tests in Mathematics, 79, {Springer-Verlag New York} (1982).
	
	\bibitem{wang} B.X. Wang, Existence, stability and bifurcation of random complete and
periodic solutions of stochastic parabolic equations, {\it Nonlinear Analysis}, Vol. 103 (2014), 9-25.

	\bibitem{ZZ} H.Z. Zhao and Z.H. Zheng, Random periodic solutions of random dynamical systems, \emph{J. Differential Equations}, Vol. 246 (2009), 2020-2038.


\end{thebibliography}
\end{document}